\newcommand{\lowerh}{\underline{h}_A}
\newcommand{\upperh}{\overline{h}_A}
\newcommand{\conv}{\text{conv}}
\newcommand{\kf}{_{K_1}}
\newcommand{\ks}{_{K_2}}
\newcommand{\jf}{_{J_1}}
\newcommand{\js}{_{J_2}}
\newcommand{\diag}{\text{diag}}
\newcommand{\NP}{${NP}$}
\renewcommand{\And}{\&}
\newcommand{\mybar}[1]{\ifmmode\setbox0\hbox{$#1$}%
\else
\setbox0\hbox{#1}%
\fi
\makebox[\the\wd0][c]{%
\rule[0.42\ht0]{0.75\wd0}{0.7pt}}\hspace*{-\the\wd0}#1}
\newcommand{\myZ}{\mathbf{Z}}
\newcommand{\myY}{\mathbf{Y}}
\newcommand{\myX}{\mathbf{X}}
\newcommand{\bbR}{\mathbb{R}}
\newcommand{\bm}[1]{\boldsymbol{#1}}
\newcommand\ignore[1]{}
\newcommand{\pushright}[1]{\ifmeasuring@#1\else\omit\hfill$\displaystyle#1$\fi\ignorespaces}
\newcommand{\pushleft}[1]{\ifmeasuring@#1\else\omit$\displaystyle#1$\hfill\fi\ignorespaces}
\theoremstyle{definition}
\newcommand{\alert}[1]{\textbf{}}
\newcommand{\BEAS}{\begin{eqnarray*}}
\newcommand{\EEAS}{\end{eqnarray*}}
\newcommand{\BEA}{\begin{eqnarray}}
\newcommand{\EEA}{\end{eqnarray}}
\newcommand{\BEQ}{\begin{equation}}
\newcommand{\EEQ}{\end{equation}}
\newcommand{\BIT}{\begin{itemize}}
\newcommand{\EIT}{\end{itemize}}
\newcommand{\BNUM}{\begin{enumerate}}
\newcommand{\ENUM}{\end{enumerate}}
\newcommand{\BEL}[1]{\begin{equation}\label{#1}}
\newcommand{\EEL}{\end{equation}}
\newcommand{\BA}{\begin{array}}
\newcommand{\EA}{\end{array}}
\newtheorem{theorem}{Theorem}
\newtheorem{corollary}[theorem]{Corollary}
\newtheorem{proposition}[theorem]{Proposition}
\newtheorem{remark}[theorem]{Remark}
\newcounter{exno}
\begin{document}

\title{Strong Formulations for \\ Hybrid System Control}

\author{Jisun Lee, Hyungki Im, Alper Atamt\"{u}rk
\thanks{November 24, 2024. This work was supported, in part, by Grant 12951270 from the Office of Naval Research of the US DoD.}
\thanks{All authors are with the Department of Industrial Engineering and Operations Research, University of California, Berkeley, CA 94720. (e-mails: jisun\_lee@berkeley.edu, hyungki.im@berkeley.edu, atamturk@berkeley.edu)}}




\maketitle

\begin{abstract}
We study the mixed-integer quadratic programming formulation of an $n$-period hybrid control problem with a convex quadratic cost function and linear dynamics. 
We first give the convex hull description of the single-period, two-mode problem in the original variable space through two new classes of valid cuts.
These cuts are then generalized to the single-period, multi-mode, multi-dimensional case and applied to solve the general $n$-period hybrid control problem.
Computational experiments demonstrate the effectiveness of the proposed strong formulations derived through the cut generation process in the original variable space. These formulations yield a substantial reduction in computational effort for synthetic test instances and instances from the energy management problem of a power-split hybrid electric vehicle.
\end{abstract}

\begin{IEEEkeywords}
Hybrid system control, feasibility cuts, nonlinear cuts, disjunctive programming, projection.
\end{IEEEkeywords}

\section{Introduction} \label{sec:Intro}

\IEEEPARstart{A} hybrid control system is one with a mixture of discrete and continuous dynamics \citep{borrelli2017predictive}. 
Controlling a hybrid system for a long-term or infinite time horizon is computationally challenging. Moreover, there may be external disturbances that cannot be predicted in advance. Therefore, the model predictive control (MPC) approach tackles the challenge of uncertainty over long horizons by solving short-term problems iteratively. 
In each period $t$, the current state is measured, and an $n$-period hybrid control problem (HCP) is solved to determine the control actions for $[t,t+n)$. The process is repeated over a rolling horizon after implementing the control only for time $t$. Although the MPC approach performs well in practice, it requires real-time computations and considerations for stability, robustness, and feasibility of the solutions.

\IEEEpubidadjcol
In this paper, we study the $n$-period HCP subproblem solved at each iteration of the MPC approach. Our primary focus is to
provide strong convex relaxations of the problem through novel valid inequalities in the original space of variables, leading to improved lower bounds and, in turn, reducing the search effort required by the branch-and-bound (B\&B) algorithms. 

The $n$-period HCP can be formulated as the following mixed-integer quadratic program (MIQP):
\begin{equation}
    \begin{aligned}
        \min \ &\sum_{t=1}^n \left( \bm{x}_t^{\top} \bm{Q}_t \bm{x}_t + \bm{y}_t^{\top} \bm{R}_t \bm{y}_t + \bm{z}_t^\top \bm{S}_t \bm{z}_t \right) \hspace{-1.2mm}&& \hspace{-2.7mm}+ \bm{x}_{n+1}^{\top} \bm{Q}_{n+1} \bm{x}_{n+1} \hspace{-3mm}\\
        \text{ s.t.} \ & \bm{x}_{t+1} = \bm{A}_t \bm{x}_t + \bm{B}_t \bm{y}_t + \bm{C}_t \bm{z}_t + \bm{f}_t, && t \in [n] \\  
        & \mathbbm{1}^\top \bm{z}_t \leq 1, && t \in [n]\\
        & \bm{G}_{t} \bm{z}_t \leq \bm{y}_t \leq \bm{H}_{t} \bm{z}_t, && t \in [n]\\
        & \bm{\ell}_{t} \leq \bm{x}_t \leq \bm{u}_{t}, && t \in [n+1]\\
        & \bm{x}_t \in \mathbb{R}^{d_x}, && t \in [n+1]\\
        & \bm{y}_t \in \mathbb{R}^{d_y}, && t \in [n]\\
        & \bm{z}_t \in \{0,1\}^{d_z}, && t \in [n],
    \end{aligned}
    \label{ORIG_PROB}
\end{equation} 
\noindent where $\bm{x}_t$ is the state variable, $\bm{y}_{t}$ is the control variable, and $\bm{z}_t$ is the indicator of the system mode at time $t$. The cost matrices $\bm{Q}_t \in \mathbb{R}^{d_x \times d_x}$, $\bm{R}_t \in \mathbb{R}^{d_y \times d_y}$, $\bm{S}_t \in \mathbb{R}^{d_z \times d_z}$ are symmetric positive semidefinite matrices, typically diagonal.
The $j$-th columns of $\bm{G}_{t}$ and $\bm{H}_{t}$ correspond to the lower and upper bounds of $\bm{y}_{t}$ when the $j$-th mode is chosen at time $t$. Vectors $\bm{\ell}_{t}$ and $\bm{u}_{t}$ represent the lower and upper bounds of $\bm{x}_t$.  There are no restrictions on the dynamics matrices $\bm{A}_t \in \mathbb{R}^{d_x \times d_x}$, $\bm{B}_t \in \mathbb{R}^{d_x \times d_y}$, $\bm{C}_t \in \mathbb{R}^{d_x \times d_z}$ and $\bm{f}_t \in \mathbb{R}^{d_x}$.
For simplicity of notation, we assume $\bm{f}_t = \bm{0}$, but demonstrate that our results extend to any $\bm{f}_t \in \mathbb{R}^{d_x}$.

The singe-period HCP is polynomial-time solvable as 
it requires solving $d_z+1$ convex quadratic programs
each corresponding to a fixed value of $\bm{z} \in \{\bm{0}, \bm{e}_1,\ldots, \bm{e}_{d_z}\}$. Conversely, the multi-period HCP is \NP-hard even with $d_z=1$. 
Consider the capacitated lot-sizing problem
\begin{equation*}
    \begin{aligned}
        \min \ & \sum_{t=1}^{n+1} q_t( x_t) + \sum_{t=1}^{n} r_t (y_t) + \sum_{t=1}^n s_t (z_t) \\
        \text{s.t. }\ & x_1 = f_0 \\
        & x_{t+1} = x_t + y_t - f_t, &&  t \in [n] \\
        & \ell_t z_t \leq y_t \leq u_t z_t, && t \in [n] \\
        & x_{t+1}, y_t \in \mathbb{R}, \ z_t \in \{0,1\}, && t \in [n]
    \end{aligned}
\end{equation*}
where $x_t$ is the inventory, $y_t$ is the production amount, $z_t$ is the indicator of production, and $f_t$ is the demand in period $t$. The cost consists of the holding cost $q_t$, the production cost $r_t$, and the setup cost $s_t$ in period $t$. 
The capacitated lot-sizing problem is known to be \NP-hard \citep{cls-nphard} and is a special case of \eqref{ORIG_PROB} with $d_x, d_y, d_z = 1$.

{
\textit{Contributions.} 
In this paper, we present strong formulations for \eqref{ORIG_PROB} in the original space of the variables through valid cuts. This approach contrasts with alternative disjunctive programming formulations of the problem given in an extended space by introducing auxiliary variables to strengthen the convex relaxations. The advantage of the formulations in the original space is that they tend to scale better by avoiding a large number of auxiliary variables and judicious use of valid cuts. 

We start the analysis by giving the convex hull description for the simplest single-period one-dimensional case through two classes of valid cuts. The cut generation process for this simple case provides the basis for the more general problem. Subsequently, we generalize the cuts in phases to multi-period HCP with multi-dimensional state, control, and indicator variables. 

}

The rest of the paper is organized as follows. In Section \ref{sec:literature}, we review the literature on solving MIQPs arising in the hybrid control domain. Section \ref{sec:strong_form} is the main part of the paper, where we give a strong formulation for \eqref{ORIG_PROB} in the original variable space through cutting planes, starting with the simplest single-period one-dimensional case and then generalizing the results, in phases, to \eqref{ORIG_PROB}. 
In Section \ref{sec:experiment}, we present a computational study to test the effectiveness of the cut generation approach using synthetic data. In Section \ref{sec:HEV}, we apply our cutting-plane approach to an energy management problem of a power-split hybrid electric vehicle and illustrate its valuable impact on computations. In Section \ref{sec:conclusion}, we conclude with a few final remarks.


\section{Literature review} \label{sec:literature}

Solving finite-horizon HCPs within short sampling times has been a key challenge to controlling hybrid systems in real-time. Reducing the solution times of HCP optimization problems has been a major research thread to tackle this challenge.

Parametric programming has been extensively utilized in the MPC literature due to the need to solve similar optimization problems repeatedly in an iterative manner.
The closed-form solution for discrete-time linear quadratic optimal control problems involving continuous variables is a piecewise linear and continuous function \cite{bemporad2002explicit}. This result is extended to hybrid systems, demonstrating that the optimal control law for a finite-time hybrid system is a time-varying piecewise affine function in \cite{borrelli2005dynamic}.
 To compute the optimal control law offline, they formulate mixed-integer multiparametric programming problems and give a dynamic programming algorithm to derive solutions optimally across a set of states. Utilizing the piecewise affinity of the optimal control law, critical regions of the parameter space for optimality are driven in \cite{dua2002multiparametric}. The Multi-Parametric Toolbox (MPT) \citep{kvasnica2004multi} provides access to such precomputed control solutions for constrained linear systems. The efficiency of B\&B procedures for solving parametric MIQPs is analyzed, establishing worst-case bounds on the number of nodes explored in \cite{axehill2010improved}. Online parametric programming remains intractable, with challenges in real-time computation and storing closed-form solutions.
Numerous approximate methods have been proposed to address these issues. A hierarchical gridding scheme to construct low-complexity approximate control laws through selective storage of grid points with significant weights is introduced in \cite{summers2011multiresolution}. A parametric B$\And$B method to identify suboptimal solutions with guaranteed bounds is given in \cite{axehill2014parametric}.

Recent studies have increasingly turned to learning methods to quickly obtain sub-optimal solutions by making computationally expensive decisions offline. In \cite{masti2019learning}, a compact neural network for predicting binary solutions in multi-parametric MIQPs is introduced, enabling binary warm-starts in HCPs. In \cite{zhang2019safe}, MPC policies for hybrid systems are approximated using supervised learning, ensuring feasibility and near-optimality with high probability. An extended framework is introduced in \cite{zhang2020near}, with a backup controller that filters out approximated MPC policies that are not near-optimal. A non-parametric learning algorithm that captures the mode sequence of hybrid MPC solutions is proposed in \cite{zhu2020fast}, which facilitates warm-starts. The CoCo (Combinatorial Offline, Convex Online) framework is introduced in \cite{cauligi2020learning}, where a multi-class classifier is trained offline to learn strategies for the combinatorial part and online control decisions are made in milliseconds, solving a small convex optimization.  This approach is extended in \cite{cauligi2021coco} to handle discrete control variables. CoCo is further generalized to mixed-integer optimization (MIO) \cite{bertsimas2022online}. A strong solver generating near-optimal solutions with many iterations and a weak solver providing sub-optimal solutions quickly are proposed in \cite{chakrabarty2021learning}. A deep neural network is trained to learn the admissible subregions of the state-space, enabling the replacement of the strong solver with the weak solver without significant performance deterioration.  A similar approach is proposed in \cite{russo2023learning}, where a neural network predicts a strategy consisting of LPs that partition the feasible set and a candidate integer solution. Training these learning models can be computationally very demanding as the process requires solving a large number of HCPs with diverse parameter choices offline.


An alternative approach to improving solution times involves warm-start strategies to solve B$\And$B subproblems efficiently. Dual methods have been popular as the dual constraints remain constant, allowing simple objective modifications to solve QP subproblems in B$\And$B.
A dual QP algorithm utilizing gradient projection methods is proposed in \cite{axehill2008dual}, offering the advantages of warm-start and rapid identification of active sets.  An accelerated dual gradient projection method for QPs is introduced in \cite{naik2017embedded}, which is also efficient for embedded systems.
A tailored algorithm for small MIQPs in hybrid MPC, coupling a B$\And$B scheme with a robust QP solver \citep{bemporad2017numerically} based on nonnegative least squares (NNLS), was further developed in \cite{bemporad2018numerically}.
A B$\And$B algorithm using the OSQP solver \citep{stellato2020osqp} is presented in \cite{stellato2018embedded}, which supports warm-starts and factorization caching.
A warm-start B$\And$B algorithm that leverages the receding horizon of MPC, reusing search trees and dual bounds, is introduced in \cite{marcucci2020warm}. 
Another MIQP solver for embedded systems is developed \cite{arnstrom2023bnb}, utilizing a dual active-set solver \cite{arnstrom2022dual} in B$\And$B to enable warm-starts. An early termination strategy with efficient projections for interior point methods for QPs in B$\And$B is proposed in \cite{liang2020early}. 
A B$\And$B algorithm employing FORCES \citep{domahidi2012forces}, an interior point method tailored for multistage structures arising in embedded MPC, and heuristics to accelerate computations are given in \cite{frick2015embedded}.
The reliability branching, a mixture of strong branching and the pseudo-costs method, is proposed in \cite{hespanhol2019structure}. 
An exact block-sparse presolve techniques are given in \cite{quirynen2023tailored} to remove decision variables and inequality constraints in HCPs efficiently.

Another stream of research has been on strengthening the convex relaxations of MIQPs for hybrid control to improve the lower bounds and thereby reduce the B\&B search effort.  Three equivalent mixed-integer convex programming (MICP) formulations for HCPs are explored in \cite{axehill2010convex}, comparing QP, SDP, and equality-constrained SDP relaxations. The strength of big-M and extended convex hull formulations for piecewise affine (PWA) systems are evaluated in \cite{marcucci2019mixed}. 
In \cite{kurtz2021more}, the variable space is expanded by introducing additional binary variables for transitions at each period, resulting in a tight convex relaxation. In \cite{marcucci2024shortest}, the HCP is modeled as a generalized shortest path problem (SPP). \ignore{ defined as follows: Given a directed graph $G = (V,E)$, a random variable $w_v \in W_v$ indicates the position of vertex $v \in V$ in a nonempty compact convex set $W_v$.  Each edge length $\ell_e(w_u, w_v)$ is a convex function of the two end points $w_u$ and $w_v$ for $e=(u,v) \in E$.  HCP is then modeled as a shortest $s, t$ path problem among  the set of all $s-t$ paths $\Pi$:
\begin{equation*}
    \begin{aligned}
        \min_{\pi \in \Pi} \min_{w_{\pi} \in W_{\pi}} \ell_{\pi} (w_{\pi}).
    \end{aligned}
\end{equation*} 
\todo{SPP is unclear. Where did random variables come from? }
}
We compare our model with the SPP formulation in Section \ref{sec:experiment}.
In addition, strong formulations of HCPs for specific applications are also actively pursued, e.g., \citep{andrikopoulos2013piecewise}, \citep{han2017feedback}, \citep{deits2014footstep}, \citep{ aceituno2017simultaneous}. See \cite{ag-super}, \cite{han-2x2}, \cite{wei2024convex} and the references therein for recent convexification methods for convex quadratic programs with indicator variables.


These stronger convex formulations are developed in an extended space using auxiliary variables derived from disjunctive programming, which makes them challenging to deploy in large-scale applications. Limited research exists on generating strong convex relaxations of HCPs in the original variable space to enable tractable solution times. Addressing this gap in the literature is the primary goal of the current paper.

\ignore{ 
On the contrary, there is an active exploration of strengthening the relaxations of related MICPs.
\cite{frangioni2006perspective} introduce \textit{perspective cuts} based on the subdifferentials of the perspective function. \cite{akturk2009strong} develop a closed-form convex hull representation of a mixed-integer set, closely related to the feasible set of \eqref{ORIG_PROB}. \cite{gunluk2012perspective} and  \cite{bonami2015mathematical} demonstrated how the perspective formulation can be employed to derive strong formulations for some MICPs in the original space. Other studies \citep{anstreicher2021quadratic, wei2022convex, wu2017quadratic, dong2013valid, gunluk2010perspective, atamturk2018strong} focus on reformulations and cut generations for relevant MIQPs, but the key difference is that there is no linear system constraint that links between variables. 
Motivated by these studies, we utilize the problem structure of \eqref{ORIG_PROB} to generate effective cuts in the original variable space.
}

\section{Strong formulation of HCP} \label{sec:strong_form}

In this section, we derive a strong formulation for the HCP \eqref{ORIG_PROB}. 
Starting with the single-period HCP with one-dimensional state, control, and indicator variables in Section \ref{sec:cut_simple}, a conic quadratic convex hull representation of the epigraph set is constructed in an extended space. Then, two classes of cuts in the original variable space are derived. The cut generation is progressively generalized in the following subsections, ultimately making it applicable to the $n$-period HCP \eqref{ORIG_PROB}. 
Section \ref{sec:cut_general1} extends the approach to the multi-dimensional state and control variables case, $d_x, d_y >1$, and Section \ref{sec:cut_general2} demonstrates it for any $\bm{f}_t \in \mathbb{R}^{d_x}$ in the linear dynamics constraint.
In Section \ref{sec:cut_general3} we show how the cut generation can be adapted to the HCP with a multi-dimensional indicator variable $\bm{z}_t \in \{0,1\}^{d_z}$, $d_z >1$. Finally, in Section \ref{sec:cut_general4}, the cut generation is applied to the $n$-period HCP.

\subsection{Single-period HCP with one-dimensional variables}\label{sec:cut_simple}
Consider the epigraph formulation of the single-period HCP, where all variables are one-dimensional:
\begin{equation}
    \begin{aligned}
        \min \ \ &w + q_1 x_1^2\\
        \text{s.t. } \ &q_2 x_2^2 +ry^2 + z \leq w\\
        & x_2 = ax_1 + by + cz\\
        & g z \leq y \leq h z\\
        & \ell_{t} \leq x_t \leq u_{t}, \quad t=1,2\\
        & z \in \{0,1\}, \ x_1,x_2,y, w \in \mathbb{R}
    \end{aligned}
    \label{SIMPLE_PROB}
\end{equation}
with $q_1, q_2,r \geq 0$. Let  $\myZ_1$ denote the constraint set of \eqref{SIMPLE_PROB} and (R\ref{SIMPLE_PROB}) its continuous relaxation obtained by replacing the binary domain $\{0,1\}$ of $z$ with the interval $[0,1]$.

\begin{proposition} \label{prop:1dim_nonlinear_ext}
The convex hull of $\myZ_1$ can be represented in an extended space as:
\begin{subequations}
\begin{align}
    &\conv (\myZ_1) = \bigg\{ (x_1, x_2, y, z, w): \text{constraints of (R\ref{SIMPLE_PROB}) and }\qquad \nonumber\\
    &  \quad \exists p \in \bbR \text{: } w \geq q_2 \left(\frac{1}{z}-1\right)\left(x_2 - ap\right)^2 + q_2 x_2^2 +z + \frac{ry^2}{z}  \label{1dim_nonlinear_ext_cut} \\
    & \quad \ell_{1} \leq p \leq u_{1}, \ \frac{x_1- u_{1}z}{1-z} \leq p \leq \frac{x_1 - \ell_{1}z}{1-z}, \label{1dim_ext_bd1}\\ 
    & \quad \ell_{2} \leq a p \leq u_{2}, \ \frac{x_2 - u_{2} z}{1-z} \leq ap \leq \frac{x_2 - \ell_{2}z}{1-z}\bigg\} \cdot \label{1dim_ext_bd2}
\end{align}
\end{subequations}
\end{proposition}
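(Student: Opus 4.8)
The plan is to establish the identity by a standard disjunctive-programming argument based on the two-term disjunction $z=0$ versus $z=1$, combined with a perspective reformulation of the convex quadratic constraint that carries the auxiliary variable $p$. First I would fix the integer variable: when $z=0$, the constraints $gz\le y\le hz$ force $y=0$, the dynamics give $x_2=ax_1$, and the epigraph inequality reduces to $w\ge q_2x_2^2$; when $z=1$, we get $x_2=ax_1+by+c$ and $w\ge q_2x_2^2+ry^2+1$. So $\myZ_1$ is the union of two polyhedral-plus-convex-quadratic pieces (intersected with the box constraints on $x_1,x_2$), and $\conv(\myZ_1)$ is the closure of the convex hull of this union.

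Next I would apply the disjunctive (Balas) machinery: a point lies in $\conv(\myZ_1)$ iff it can be written as $z$ times a point of the $z=1$ piece plus $(1-z)$ times a point of the $z=0$ piece. Writing the $z=0$ piece's $x_1$-coordinate as $p$ (this is exactly the role of the new variable: $p$ is the ``disaggregated'' copy of $x_1$ in the off mode, scaled back up), the box constraint $\ell_1\le x_1^{(0)}\le u_1$ becomes $\ell_1\le p\le u_1$, and the convexity combination $x_1 = z x_1^{(1)} + (1-z)p$ together with $\ell_1\le x_1^{(1)}\le u_1$ yields the bounds $\frac{x_1-u_1z}{1-z}\le p\le\frac{x_1-\ell_1z}{1-z}$ in \eqref{1dim_ext_bd1}. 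The same bookkeeping on $x_2$, using $x_2^{(0)}=ax_1^{(0)}=ap$, produces \eqref{1dim_ext_bd2}. For the cost, the $z=1$ term contributes $z\big(q_2(x_2^{(1)})^2+r(y^{(1)})^2+1\big)$ and the $z=0$ term contributes $(1-z)q_2(x_2^{(0)})^2$; substituting $x_2^{(1)}=\frac{x_2-(1-z)ap}{z}$ and $y^{(1)}=y/z$ and simplifying is precisely the routine algebra that collapses to the right-hand side of \eqref{1dim_nonlinear_ext_cut}, where the term $q_2(\frac1z-1)(x_2-ap)^2$ is the homogenized/perspective form of the squared deviation between the two copies of $x_2$.

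The main obstacle, as usual in these convex-hull-in-an-extended-space statements, is the boundary behavior at $z=0$ and $z=1$ — i.e. verifying that the closure is taken correctly and that \eqref{1dim_nonlinear_ext_cut}–\eqref{1dim_ext_bd2} with the conventions $0/0$, $x/0=+\infty$ give exactly the closed convex hull rather than something slightly larger or smaller on these faces. I would handle this by interpreting each fractional expression via its closure (the perspective function of a convex function is closed when defined appropriately at the origin), checking the $z=0$ face reduces to $\{x_2=ax_1,\ y=0,\ w\ge q_2x_2^2,\ \ell_i\le x_i\le u_i\}$ (the $p$-bounds pin $p=x_1$) and the $z=1$ face reduces to the single-mode feasible set, and then appealing to the standard result that disjunctive hulls of finitely many closed convex sets, expressed through such perspective substitutions, are closed and exact. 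The remaining verification — that the displayed right-hand side of \eqref{1dim_nonlinear_ext_cut} is jointly convex in $(x_1,x_2,y,z,w,p)$ on $z\in(0,1)$, which is needed for \eqref{1dim_nonlinear_ext_cut} to define a convex set — follows since $(\frac1z-1)(x_2-ap)^2$ and $ry^2/z$ are perspectives of convex quadratics and hence convex, completing the argument.
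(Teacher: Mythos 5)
Your proposal is correct and follows essentially the same route as the paper: the two-term disjunction $\myZ_1=\myZ_1^0\cup\myZ_1^1$, the convex-combination representation with multiplier $\lambda=z$, the identification $p=x_1^0$, and the substitutions $x_2^1=\frac{x_2-(1-z)ap}{z}$, $y^1=y/z$ whose algebra collapses to \eqref{1dim_nonlinear_ext_cut}. Your extra attention to the closure/boundary behavior at $z\in\{0,1\}$ is a careful addition that the paper's proof passes over silently, but it does not change the underlying argument.
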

\begin{proof}
The feasible set $\myZ_1$ is the union of $\myZ_1^0$ and $\myZ_1^1$ defined as
\begin{equation*}
    \begin{aligned}
     &\myZ_1^0 = \big\{\left(x_1^0, x_2^0, y^0 , 0, w^0\right): q_2\left(x_2^0\right)^2 \leq w^0,  \\
     &\pushright{x_2^0 = ax_1^0, \ y^0 = 0, \ \ell_{t} \leq x_t^0 \leq u_{t}, \ t=1,2\big\}, \ }\\
     &\myZ_1^1 = \big\{\left(x_1^1, x_2^1, y^1 , 1, w^1\right):  q_2 \left(x_2^1\right)^2 +r \left(y^1\right)^2 +1 \leq w^1, \qquad \\
     &\pushright{x_2^1 = ax_1^1 + by^1 + c, \ g\leq y^1 \leq h, \ \ell_{t} \leq x_t^1 \leq u_{t}, \ t=1,2\big\}.\ }
    \end{aligned}
\end{equation*}
Since $\myZ_1^0$ and $\myZ_1^1$ are convex, $\bm{v}=(x_1, x_2, y, z, w) \in \conv(\myZ_1)$ if and only if $\bm{v}$ is a convex combination of points in $\myZ_1^0$ and $\myZ_1^1$, i.e., there exist $\bm{v}^0 = \left(x_{1}^0,x_{2}^0,y^0,0,w^0\right) \in \myZ_1^0$, $\bm{v}^1 = \left(x_{1}^1,x_{2}^1,y^1,1,w^1\right) \in \myZ_1^1$ and $\lambda \in \left[0,1\right]$ such that
\begin{align*}
    &x_1 = (1-\lambda) x_1^0 + \lambda x_1^1, \ x_2 = (1-\lambda) x_2^0 + \lambda x_2^1, \\ &y = (1-\lambda) y^0 + \lambda y^1, \ z = \lambda, \ \delta = (1-\lambda) w^0 + \lambda w^1.
\end{align*}
Employing $\lambda = z$, $x_2^0 = ax_1^0$, $w^0 \geq q_2 \left( x_2^0 \right)^2$, and $w^1 \geq q_2 \left(x_2^1\right)^2 +r \left(y^1\right)^2 +1$, the point $v$ can be written as
\begin{align*}
    &x_1 = (1-z) x_1^0 + zx_1^1, \ x_2 = (1-z) ax_1^0 + zx_2^1, \ y= zy^1, \\ 
    &w \geq (1-z) q_2 a^2 \left( x_1^0 \right)^2 + z \left( q_2 \left( x_2^1 \right)^2 + r \left(y^1\right)^2 + 1 \right),
\end{align*}
where 
\begin{align*}
    x_2^1 = ax_1^1 + by^1 + c = a \left( \frac{x_1 - (1-z) x_1^0}{z}\right) + \frac{by}{z} + c.
\end{align*}
Projecting out $y^1$, $x_2^1$ and denoting $p = x_1^0$, constraint \eqref{1dim_nonlinear_ext_cut} is obtained along with the additional constraints
\begin{align*}
    &x_1^0 = p \in [\ell_{1}, u_{1}], \ x_1^1 = \frac{x_1 - z p}{1-z} \in [\ell_{1}, u_{1}], \\
    &x_2^0 = ap \in [\ell_{2}, u_{2}], \ x_2^1 = \frac{x_2 - (1-z) ap}{z} \in [\ell_{2}, u_{2}],
\end{align*}
which are equivalent to constraints in \eqref{1dim_ext_bd1} and \eqref{1dim_ext_bd2}.
\end{proof}

\begin{remark}
Constraint \eqref{1dim_nonlinear_ext_cut} of Proposition~\ref{prop:1dim_nonlinear_ext}
\[
w \geq q_2 \left(\frac{1}{z}-1\right)\left(x_2 - ap\right)^2 + q_2 x_2^2 +z + \frac{ry^2}{z}
\]
enhances the perspective reformulation of a convex quadratic function with an indicator \citep{akturk2009strong}, \citep{gunluk2012perspective} by exploiting the linear dynamics equality $x_2 = a x_1 + by + cz$. 
It is instructive to analyze \eqref{1dim_nonlinear_ext_cut} for values of $z \in [0,1]$. 
When $z=1$, \eqref{1dim_nonlinear_ext_cut} reduces to
$w \ge q_2 x_2^2 + r y^2 + 1$, 
which is the original quadratic constraint in \eqref{SIMPLE_PROB}. When $z=0$, $y = 0$ and, by \eqref{multidim_ext_bd2}, $ap = x_2$. Therefore, \eqref{1dim_nonlinear_ext_cut} reduces to $w \ge q_2 x_2^2$. For $0 < z < 1$,  \eqref{1dim_nonlinear_ext_cut} is stronger than the original constraint
$w \ge q_2 x_2^2 + r y^2 + z$.
\end{remark}

Defining $\Tilde{p} = (1-z) p$, the convex hull expression in Proposition~\ref{prop:1dim_nonlinear_ext} can be reformulated with convenient conic quadratic inequalities as shown next.

\begin{corollary} \label{cor:1dim_conv_ext}
A conic quadratic representation of $\conv(\myZ_1)$ can be formulated as follows:
\begin{align*}
    &\conv (\myZ_1) =  \Big\{ (x_1, x_2, y, z, w): \text{constraints in (R\ref{SIMPLE_PROB}) and } \\
    & \qquad \exists \tilde{p}, \tilde{w}_1, \tilde{w}_2 \in \bbR \text{ s.t. } w \geq  \tilde{w}_1 + \tilde{w}_2 + z, \\
    & \qquad \tilde{w}_1 z \geq q_2 (x_2 - a\tilde{p})^2+ ry^2, \ \Tilde{w}_2 (1-z) \geq q_2 a^2 \tilde{p}^2,  \\ 
    & \qquad \ell_{1} (1-z) \leq \tilde{p} \leq u_{1} (1-z), \  \ell_{1}z \leq x_1 -\Tilde{p} \leq u_{1} z, \\ 
    & \qquad \ell_{2} (1-z) \leq a\tilde{p} \leq u_{2} (1-z), \ \ell_{2}z \leq x_2 - a\Tilde{p} \leq u_{2} z\Big\} \cdot
\end{align*}
\end{corollary}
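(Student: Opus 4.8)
The plan is to derive this corollary directly from the extended-space description in Proposition~\ref{prop:1dim_nonlinear_ext} by the substitution $\tilde{p} = (1-z)p$, together with the standard trick of splitting a single quadratic epigraph constraint into a sum of two rotated-cone (perspective-type) constraints. First I would observe that for $z \in (0,1)$ the map $p \mapsto \tilde p = (1-z)p$ is a bijection, so constraints \eqref{1dim_ext_bd1}--\eqref{1dim_ext_bd2} translate verbatim into the bound constraints $\ell_1(1-z) \le \tilde p \le u_1(1-z)$, $\ell_1 z \le x_1 - \tilde p \le u_1 z$, and likewise for the $x_2$/$a\tilde p$ block, since $ap = a\tilde p/(1-z)$ and $\frac{x_1 - u_1 z}{1-z} \le p$ is equivalent to $x_1 - u_1 z \le \tilde p$. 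The boundary cases $z=0$ and $z=1$ need to be checked separately: at $z=1$ the new bound constraints force $\tilde p = 0$ and $\ell_1 \le x_1 \le u_1$, at $z=0$ they force $\tilde p = x_1$ and $a\tilde p = x_2$, matching what Proposition~\ref{prop:1dim_nonlinear_ext} and its following remark give in the same limits.

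Next I would rewrite the nonlinear cut \eqref{1dim_nonlinear_ext_cut}. Multiplying the term $q_2(1/z - 1)(x_2 - ap)^2 = q_2\frac{1-z}{z}(x_2-ap)^2$ and using $(1-z)(x_2 - ap) = (1-z)x_2 - a\tilde p = x_2 - z x_2 - a\tilde p$; here I would note that the bound block gives $x_2 - a\tilde p \in [\ell_2 z, u_2 z]$, hence $(1-z)x_2 - a\tilde p = (x_2 - a\tilde p) - z x_2$, and more to the point $q_2 \frac{1-z}{z}(x_2 - ap)^2 = \frac{q_2}{z}\big((1-z)(x_2-ap)\big)^2 / (1-z) = \frac{q_2}{z(1-z)}(x_2 - z x_2 - a\tilde p)^2$. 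Actually the cleaner route is to keep $x_2 - a\tilde p$ directly: since $\tilde p = (1-z)p$ we have $x_2 - a\tilde p = x_2 - a(1-z)p$, and the combination $q_2(1/z-1)(x_2-ap)^2 + q_2 x_2^2 + ry^2/z$ can be regrouped, after clearing the $1/z$ and $1/(1-z)$ denominators, as $\tilde w_1 + \tilde w_2$ with $\tilde w_1 z \ge q_2(x_2 - a\tilde p)^2 + r y^2$ and $\tilde w_2(1-z) \ge q_2 a^2 \tilde p^2$. I would verify this regrouping by eliminating $\tilde w_1, \tilde w_2$: at the optimum $\tilde w_1 = (q_2(x_2-a\tilde p)^2 + ry^2)/z$ and $\tilde w_2 = q_2 a^2\tilde p^2/(1-z) = q_2 a^2 (1-z) p^2$, so $\tilde w_1 + \tilde w_2 + z = \frac{q_2(x_2 - a(1-z)p)^2 + ry^2}{z} + q_2 a^2(1-z)p^2 + z$, and a short algebraic expansion of $(x_2 - a(1-z)p)^2/z + a^2(1-z)p^2$ should collapse to $(1/z - 1)(x_2 - ap)^2 + x_2^2$, recovering \eqref{1dim_nonlinear_ext_cut} exactly.

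The two displayed inequalities $\tilde w_1 z \ge q_2(x_2 - a\tilde p)^2 + ry^2$ and $\tilde w_2(1-z) \ge q_2 a^2\tilde p^2$ are rotated second-order cone constraints (each of the form $uv \ge \|\cdot\|^2$ with $u,v \ge 0$), which is exactly the "conic quadratic representation" claimed. The remaining housekeeping is to confirm that the projection of the lifted conic set onto $(x_1,x_2,y,z,w)$ equals $\conv(\myZ_1)$: one inclusion is immediate because any feasible point of the conic system yields, via $\tilde w_1 + \tilde w_2 \le w - z$, a point satisfying \eqref{1dim_nonlinear_ext_cut}; the other follows by choosing the $\tilde w_i$ at their minimal values as above and setting $\tilde p = (1-z)p$ for the $p$ guaranteed by Proposition~\ref{prop:1dim_nonlinear_ext}. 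I expect the main obstacle to be purely bookkeeping: carefully handling the $z \in \{0,1\}$ endpoints where denominators vanish (the rotated cones degenerate correctly, but this must be stated), and double-checking the algebraic identity that shows the split $(\tilde w_1, \tilde w_2)$ reproduces \eqref{1dim_nonlinear_ext_cut} with no cross terms left over — the cancellation of the $-2a x_2 (1-z) p / z$ type terms against $a^2(1-z)p^2$ terms is the one place a sign slip could hide.
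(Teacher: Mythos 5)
Your proposal is correct and matches the paper's argument exactly: the paper proves Corollary~\ref{cor:1dim_conv_ext} by the single substitution $p = \tilde{p}/(1-z)$ into Proposition~\ref{prop:1dim_nonlinear_ext}, which is precisely your change of variables $\tilde{p}=(1-z)p$ together with the splitting of \eqref{1dim_nonlinear_ext_cut} into the two rotated-cone constraints. Your algebraic identity $\frac{1}{z}(x_2-a(1-z)p)^2 + a^2(1-z)p^2 = \frac{1-z}{z}(x_2-ap)^2 + x_2^2$ checks out, so the extra verification you carry out only makes explicit what the paper leaves as "immediate."
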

\noindent 
Corollary~\ref{cor:1dim_conv_ext} follows immediately from plugging in $p = \frac{\tilde{p}}{1-z}$ to the $\conv(\myZ_1)$ representation in Proposition~\ref{prop:1dim_nonlinear_ext}.

Although formulations in Proposition~\ref{prop:1dim_nonlinear_ext} and Corollary~\ref{cor:1dim_conv_ext} are convenient to state, they include additional variables, which we need to project out to arrive at a strong formulation of \eqref{SIMPLE_PROB} in the original variable space. To this end,
define $\ell_{a} =\min \left\{a\ell_{1}, au_{1}\right\}$, $u_{a} =\max \left\{a\ell_{1}, au_{1}\right\}$. For given $(x_1, x_2, y, z)$, consider the projection problem:
\begin{equation}
    \begin{aligned}
        \tau(x_1, x_2, y, z) = \ &\underset{p}{\min} \ \ (x_2 - ap)^2 \\
        &\text{ s.t.} \ \ \ell(x_1,x_2, z) \leq ap \leq u(x_1,x_2,z)
    \end{aligned}
    \label{MIN_PROB}
\end{equation}
where
\begin{align*}
    \ell(x_1,x_2, z) &:= \max \bigg\{\ell_{a},\ell_{2},\frac{x_2-u_{2}z}{1-z}, \frac{ax_1 - u_{a} z}{1-z}\bigg\},\\ 
    u(x_1,x_2, z) &:= \min \bigg\{u_{a},u_{2}, \frac{x_2 - \ell_{2}z}{1-z}, \frac{ax_1 -\ell_{a} z}{1-z}\bigg\} \cdot
\end{align*}

\noindent For \eqref{MIN_PROB} to be feasible, it is easy to see that the following eight conditions must hold:
\begin{enumerate}[label=(\alph*)]
    \item \label{itm:1} $\ell_{a} \leq u_{2}$
    \item \label{itm:2} $\ell_{2} \leq u_{a}$
    \item \label{itm:3} $\ell_{a}(1-z) \leq x_2 - \ell_{2} z$
    \item \label{itm:4} $x_2 - u_{2}z \leq u_{a} (1-z)$
    \item \label{itm:5} $x_2 \geq by + cz  + \ell_{a}z + \ell_{2}(1-z)$
    \item \label{itm:6} $x_2 \leq by + cz + u_{a}z + u_{2} (1-z)$
    \item \label{itm:7} $u_{2}z \geq by + cz + \ell_{a}z$
    \item \label{itm:8} $by + cz + u_{a}z \geq \ell_{2}z$
\end{enumerate}

\begin{proposition}\label{Prop1}
For any feasible solution of \eqref{SIMPLE_PROB}, 
\begin{enumerate}
    \item\label{prop1.1} if \ref{itm:1} or \ref{itm:2} is violated, $z =1$ holds,
    \item\label{prop1.2} otherwise, \ref{itm:3}--\ref{itm:8} hold. 
\end{enumerate}
\end{proposition}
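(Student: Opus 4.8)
The plan is to exploit that every feasible solution of \eqref{SIMPLE_PROB} has $z\in\{0,1\}$, which reduces both claims to a short case check on $z$. I will use two elementary facts repeatedly. First, $\ell_1\le x_1\le u_1$ forces $ax_1\in[\ell_a,u_a]$, since by the definitions $[\ell_a,u_a]=\{at:t\in[\ell_1,u_1]\}$ (verified by treating $a>0$, $a<0$, $a=0$ separately). Second, $\ell_2\le x_2\le u_2$. I will also use that $gz\le y\le hz$ forces $y=0$ when $z=0$, and that the dynamics equation $x_2=ax_1+by+cz$ gives $x_2=ax_1$ when $z=0$ and $x_2=ax_1+by+c$ when $z=1$.

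For part~\ref{prop1.1}: suppose \ref{itm:1} is violated, i.e.\ $\ell_a>u_2$. If $z=0$, then $x_2=ax_1\ge\ell_a>u_2\ge x_2$, a contradiction, so $z=1$. Symmetrically, if \ref{itm:2} is violated, i.e.\ $\ell_2>u_a$, then $z=0$ would give $x_2=ax_1\le u_a<\ell_2\le x_2$, impossible, so again $z=1$.

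For part~\ref{prop1.2}: I will verify \ref{itm:3}--\ref{itm:8} directly at $z=0$ and at $z=1$ (notably, \ref{itm:1}--\ref{itm:2} are never needed). At $z=0$, substituting $y=0$ and $x_2=ax_1$ collapses \ref{itm:3}--\ref{itm:8} to $\ell_a\le x_2$, $x_2\le u_a$, $\ell_2\le x_2$, $x_2\le u_2$, $0\ge0$, and $0\ge0$, all true by the two facts above. At $z=1$, substituting $x_2=ax_1+by+c$ turns \ref{itm:3} and \ref{itm:4} into $x_2\ge\ell_2$ and $x_2\le u_2$, turns \ref{itm:5} and \ref{itm:6} into $ax_1\ge\ell_a$ and $ax_1\le u_a$, and turns \ref{itm:7} and \ref{itm:8} into $u_2\ge by+c+\ell_a$ and $by+c+u_a\ge\ell_2$. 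The first four are immediate; for \ref{itm:7}, write $by+c=x_2-ax_1\le u_2-ax_1$, whence $by+c+\ell_a\le u_2-(ax_1-\ell_a)\le u_2$ because $ax_1\ge\ell_a$, and \ref{itm:8} follows symmetrically from $x_2\ge\ell_2$ and $ax_1\le u_a$.

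I do not expect a genuine obstacle: the only steps needing a moment of care are the sign bookkeeping that places $ax_1$ in $[\ell_a,u_a]$ and the two-step chaining for \ref{itm:7}--\ref{itm:8}. Everything else is substitution of the binary value of $z$ followed by reading off the bound constraints.
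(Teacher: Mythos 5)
Your proof is correct. The ingredients are the same as the paper's --- binariness of $z$, the containment $ax_1 \in [\ell_a, u_a]$ induced by the bounds on $x_1$, the bounds on $x_2$, and the dynamics equation --- but the organization of part~\ref{prop1.2} differs: the paper negates each of \ref{itm:3}--\ref{itm:8} separately and derives a contradiction (first forcing a value of $z$ from the negated inequality, then contradicting a bound constraint), whereas you substitute $z=0$ and $z=1$ once and read off all six inequalities as consequences of the bound constraints, with only \ref{itm:7}--\ref{itm:8} at $z=1$ requiring the short chaining step you give. Your decomposition by the value of $z$ is more economical and makes transparent that the feasibility cuts are exactly the bound constraints in disguise at each binary value; the paper's per-inequality contradictions are longer but isolate, for each cut, which constraint certifies it. For part~\ref{prop1.1} the two arguments coincide up to contraposition. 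Like the paper, you in fact establish \ref{itm:3}--\ref{itm:8} for every feasible point of \eqref{SIMPLE_PROB}, so the ``otherwise'' in the statement is not needed as a hypothesis --- a point you correctly flag.
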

\noindent 
We call \ref{itm:3}--\ref{itm:8} as the {\textit{feasibility cuts}.
The proof of Proposition \ref{Prop1} can be found in Appendix \ref{AppendixA}.
While the feasibility cuts are satisfied by all solutions of \eqref{SIMPLE_PROB}, they can remove points in its continuous relaxation with $0 < z < 1$. By Proposition \ref{Prop1}, if \ref{itm:1} or \ref{itm:2} is violated, $z =0$ is not possible and one can fix $z=1$. Otherwise, adding the feasibility cuts to \eqref{SIMPLE_PROB} ensures the feasibility of \eqref{MIN_PROB}, leading to the cut in the original variable space, referred to as the \textbf{\textit{nonlinear cut}}, in Corollary \ref{Prop2_revised}.

\begin{corollary}\label{Prop2_revised}
Let $\ell (x_1,z) = \max\left\{\ell_a, \frac{a x_1 - u_{a} z}{1-z}\right\}$, $u(x_1,z) = \min \big\{ u_{a}, \frac{ax_1 - \ell_a z}{1-z}\big\}$. The following \textbf{nonlinear cut} is valid for \eqref{SIMPLE_PROB}:
\begin{align}
    w \geq \ & q_2 \left(\frac{1}{z} - 1\right) \left( \left( \ell (x_1,z) - x_2 \right)_+^2 + \left( x_2 - u(x_1,z)\right)_+^2 \right) \nonumber\\
    &\ + q_2 x_2^2 + \frac{ry^2}{z} + sz \label{nonlinear_cut} 
\end{align}
\end{corollary}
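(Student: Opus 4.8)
The plan is to derive the \textbf{nonlinear cut} \eqref{nonlinear_cut} as a projection of the extended-space description in Proposition~\ref{prop:1dim_nonlinear_ext}, using the projection problem \eqref{MIN_PROB} as the bridge. Starting from constraint \eqref{1dim_nonlinear_ext_cut}, for any feasible $(x_1,x_2,y,z,w)$ of \eqref{SIMPLE_PROB} with $0<z<1$, there exists $p$ satisfying the bound constraints \eqref{1dim_ext_bd1}--\eqref{1dim_ext_bd2}, and for that $p$ we have $w \ge q_2(\tfrac1z - 1)(x_2-ap)^2 + q_2 x_2^2 + z + \tfrac{ry^2}{z}$. Since $q_2(\tfrac1z-1)\ge 0$, the right-hand side is minimized over all \emph{feasible} $p$ exactly when $(x_2-ap)^2$ is minimized, i.e.\ at the value $\tau(x_1,x_2,y,z)$ from \eqref{MIN_PROB}. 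Hence $w \ge q_2(\tfrac1z-1)\,\tau(x_1,x_2,y,z) + q_2 x_2^2 + z + \tfrac{ry^2}{z}$, which is a valid inequality in the original variables once we show $\tau$ is well-defined (i.e.\ \eqref{MIN_PROB} is feasible) and compute it in closed form.

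First I would invoke Proposition~\ref{Prop1}: when conditions \ref{itm:1}--\ref{itm:2} hold and we have added the feasibility cuts \ref{itm:3}--\ref{itm:8}, the feasible interval $[\ell(x_1,x_2,z),\,u(x_1,x_2,z)]$ of \eqref{MIN_PROB} is nonempty, so $\tau$ is well-defined; the case where \ref{itm:1} or \ref{itm:2} is violated forces $z=1$, for which \eqref{nonlinear_cut} degenerates to the original quadratic constraint and needs no separate argument. Next I would solve the one-dimensional convex problem \eqref{MIN_PROB} explicitly: it minimizes the convex parabola $(x_2-ap)^2$ in the variable $ap$ over an interval, so the optimum is attained at the point of that interval closest to $x_2$. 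This yields the standard clipping formula
\[
\tau(x_1,x_2,y,z) = \bigl(\ell(x_1,x_2,z) - x_2\bigr)_+^2 + \bigl(x_2 - u(x_1,x_2,z)\bigr)_+^2,
\]
since at most one of the two terms is nonzero. Substituting this into the valid inequality above gives \eqref{nonlinear_cut}, provided the bounds $\ell,u$ there match those in \eqref{MIN_PROB}.

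The remaining step is to reconcile the four-term $\ell(x_1,x_2,z)$, $u(x_1,x_2,z)$ of \eqref{MIN_PROB} with the two-term $\ell(x_1,z)$, $u(x_1,z)$ appearing in \eqref{nonlinear_cut}. I would argue that the two dropped terms, $\tfrac{x_2-u_2 z}{1-z}$ and $\tfrac{x_2-\ell_2 z}{1-z}$ (and the bare bounds $\ell_2,u_2$), are inactive at the optimal clipped value, or more precisely that replacing the four-term bounds with the two-term bounds does not change the clipping of $x_2$: whenever $x_2$ lies outside $[\ell(x_1,z),u(x_1,z)]$ on a given side, the feasibility cuts \ref{itm:3}--\ref{itm:6} guarantee it still lies on the correct side relative to the $x_2$-dependent bounds, so those bounds do not tighten the projection in the direction that matters. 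This bookkeeping — showing that dropping the $x_2$-based bound terms is legitimate inside the $(\cdot)_+$ expressions — is the main obstacle; it requires carefully checking, case by case on the sign of $\ell(x_1,z)-x_2$ and $x_2-u(x_1,z)$, that the feasibility cuts exactly cover the gaps. Finally I would note the typo-level consistency that $s$ in \eqref{nonlinear_cut} plays the role of the coefficient $1$ (the cost of $z$) in \eqref{1dim_nonlinear_ext_cut}, so the term $sz$ is just $z$ in the notation of \eqref{SIMPLE_PROB}.
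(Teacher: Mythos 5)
Your route is the same one the paper takes: project the extended description of Proposition~\ref{prop:1dim_nonlinear_ext} by minimizing $(x_2-ap)^2$ over the admissible $p$, evaluate the projection problem \eqref{MIN_PROB} in closed form as a clipping of $x_2$ onto an interval, and then pass from the four-term bounds $\ell(x_1,x_2,z)$, $u(x_1,x_2,z)$ to the two-term bounds $\ell(x_1,z)$, $u(x_1,z)$ appearing in \eqref{nonlinear_cut}. The first three steps are sound (one wording slip: the inequality should be derived for points of $\conv(\myZ_1)$, not for ``feasible points of \eqref{SIMPLE_PROB} with $0<z<1$'', since the latter set is empty; also note that on $\conv(\myZ_1)$ the nonemptiness of \eqref{MIN_PROB} is automatic, because Proposition~\ref{prop:1dim_nonlinear_ext} hands you a feasible $p$, so Proposition~\ref{Prop1} is only needed to make the cut evaluable at relaxation points). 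The genuine gap is the last step: you correctly identify it as the main obstacle, but you do not carry it out, and the mechanism you propose --- that the feasibility cuts \ref{itm:3}--\ref{itm:6} ``cover the gaps'' --- is not the argument that closes it.

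Two observations close it, neither involving the feasibility cuts. First, for mere validity a one-line monotonicity argument suffices: dropping terms from the max defining $\ell(x_1,x_2,z)$ and from the min defining $u(x_1,x_2,z)$ can only widen the interval, so $\left(\ell(x_1,z)-x_2\right)_+^2+\left(x_2-u(x_1,z)\right)_+^2 \le \tau(x_1,x_2,y,z)$, and the two-term right-hand side is dominated by the four-term one you already proved valid. Second, the paper proves the sharper fact that nothing is lost: since every point of $\myZ_1$ satisfies $\ell_2\le x_2\le u_2$, for $0<z<1$ one has $\ell_2\le x_2$, $\frac{x_2-u_2z}{1-z}\le x_2$, $x_2\le u_2$ and $x_2\le\frac{x_2-\ell_2z}{1-z}$; hence the four dropped candidates all sit on the non-binding side of $x_2$ and can never be the active clipping bound, so whenever clipping occurs the four-term and two-term bounds coincide, and otherwise both penalties vanish. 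Your proposed case analysis would presumably arrive at the second observation, but as written the proof is incomplete at precisely the step that needs to be done, and it points to the wrong ingredient for doing it.
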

\noindent Proof of Corollary~\ref{Prop2_revised} is given in Appendix~\ref{Appendix:nonlinear_proof}. Note that if $x_2 \in [\ell (x_1,z), u(x_1,z)]$, the first term in \eqref{nonlinear_cut} vanishes and inequality reduces to the perspective cut \citep{akturk2009strong}, \citep{gunluk2012perspective};  otherwise, it improves the perspective cut by utilizing the linear dynamics constraint.
Although the right-hand side of \eqref{nonlinear_cut} is a convex piecewise quadratic function, the cut cannot be directly added to \eqref{SIMPLE_PROB} while maintaining the convexity of the problem due to the boundaries $\frac{ax_1 - u_{a} z}{1-z} = x_2$ and $\frac{ax_1 - \ell_a z}{1-z} = x_2$. 
Therefore, to implement \eqref{nonlinear_cut} we resort to its linear underestimators.

\begin{corollary}\label{cor_grad_cut}
Given any point $\bar{\bm{\chi}} =(\bar{x}_1, \bar{x}_2, \bar{y},\bar{z})$ of (R\ref{SIMPLE_PROB})
satisfying the feasibility cuts, a convex quadratic cut
\begin{align*}
    w \geq \mu(\bar{\bm{\chi}}) := q_2 \left(\frac{1}{z} - 1\right) (\bar{x}_2 - \bar{\sigma})^2 + q_2 x_2^2 + z + \frac{ry^2}{z} 
\end{align*}
is valid for \eqref{SIMPLE_PROB} under the conditions listed in Table \ref{table:feasible_cut_revised} with the corresponding value of $\bar{\sigma}$. Moreover, the \textbf{\textit{gradient cut}} 
\begin{align}
    w \geq \ &\mu(\bar{\bm{\chi}})  + \nabla \mu(\bar{\bm{\chi}}) \left( \bm{\chi} - \bar{\bm{\chi}}\right) \label{feas_cut_grad_ver} 
\end{align}
is valid as well.
\end{corollary}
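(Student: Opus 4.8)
The plan is to prove the two claims in order: validity of the convex quadratic inequality $w\ge\mu(\bar{\bm{\chi}})$ for \eqref{SIMPLE_PROB}, and then validity of its first-order linearization \eqref{feas_cut_grad_ver}.

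First I would make explicit the relationship between $\mu(\bar{\bm{\chi}})$ and the nonlinear cut \eqref{nonlinear_cut}. By Proposition~\ref{prop:1dim_nonlinear_ext} the strengthening term is $q_2(\tfrac1z-1)(x_2-ap)^2$ for a single scalar $ap$, and \eqref{nonlinear_cut} is obtained by taking $ap$ to be the Euclidean projection of $x_2$ onto $[\ell(x_1,z),u(x_1,z)]$ through \eqref{MIN_PROB}, so that $ap\in\{x_2,\ \ell(x_1,z),\ u(x_1,z)\}$. Table~\ref{table:feasible_cut_revised} records, as a function of the position of $\bar x_2$ relative to $[\ell(\bar x_1,\bar z),u(\bar x_1,\bar z)]$ and of which term attains the active endpoint of that interval, the value of $\bar\sigma$ for which $\mu(\bar{\bm{\chi}})$, read as a function of $\bm{\chi}$, is convex on $\{z>0\}$ and coincides with the right-hand side of \eqref{nonlinear_cut} at $\bar{\bm{\chi}}$; establishing the convexity in the rows where $\bar\sigma$ is a nonconstant affine-fractional expression in $x_1$ and $z$ is a short computation I would carry out using the identity $\tfrac1{z(1-z)}=\tfrac1z+\tfrac1{1-z}$ together with the substitution $\tilde p=(1-z)p$ of Corollary~\ref{cor:1dim_conv_ext}, which turns that term into a sum of perspective quadratics.

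For validity of $w\ge\mu(\bar{\bm{\chi}})$ it suffices, by convexity in $\bm{\chi}$, to check feasible solutions of \eqref{SIMPLE_PROB} with $z\in\{0,1\}$. When $z=1$ the factor $\tfrac1z-1$ vanishes and $\mu(\bar{\bm{\chi}})$ reduces to $q_2x_2^2+ry^2+1$, which is the conic constraint already present in \eqref{SIMPLE_PROB}. When $z=0$, the linear dynamics $x_2=ax_1+by+cz$ and the bounds $gz\le y\le hz$ force $y=0$ and $x_2=ax_1$, so that the conic constraint yields only $w\ge q_2x_2^2$; the point is then to show that, with the $\bar\sigma$ dictated by Table~\ref{table:feasible_cut_revised}, the curvature term of $\mu(\bar{\bm{\chi}})$ does not obstruct the inequality $\mu(\bar{\bm{\chi}})\le q_2x_2^2\le w$ along these rays — which is where the case-by-case definition of $\bar\sigma$ is needed, together with the feasibility cuts \ref{itm:3}--\ref{itm:8} satisfied by the anchor and the observation of Proposition~\ref{Prop1} that $z=1$ may be fixed whenever \ref{itm:1} or \ref{itm:2} is violated (in which case $z=0$ is infeasible and the check is vacuous). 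This $z=0$ analysis is the crux of the argument: the perspective-type $1/z$ terms of $\mu$ are singular there, and only the branch-by-branch choice of $\bar\sigma$ keeps the curvature contribution small enough to remain below the weaker bound $w\ge q_2x_2^2$ available at $z=0$.

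The gradient cut then follows by a standard tangent-plane argument: assuming (as we may, since an integral $\bar{\bm{\chi}}$ needs no cut) that $\bar z\in(0,1)$, the function $\mu(\bar{\bm{\chi}})$ is convex and differentiable at $\bar{\bm{\chi}}$ on the open set $\{z>0\}$, so its first-order Taylor expansion at $\bar{\bm{\chi}}$ — exactly the right-hand side of \eqref{feas_cut_grad_ver} — underestimates it on all of $\{z>0\}$; combined with $w\ge\mu(\bar{\bm{\chi}})$ this gives \eqref{feas_cut_grad_ver} at every feasible solution of \eqref{SIMPLE_PROB} with $z=1$, and for $z=0$ it holds because the affine right-hand side of \eqref{feas_cut_grad_ver}, being continuous, is bounded at such a point by the limit of $\mu(\bar{\bm{\chi}})$ along feasible rays, which equals $q_2x_2^2\le w$ by the computation of the previous paragraph.
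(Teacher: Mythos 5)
Your overall architecture --- identify the active branch of the nonlinear cut at $\bar{\bm{\chi}}$ via Table~\ref{table:feasible_cut_revised}, obtain convexity of $\mu$ by substituting the corresponding \emph{affine} expression for $\tilde p=(1-z)p$ into the jointly convex extended formulation of Corollary~\ref{cor:1dim_conv_ext}, check validity at the integer points $z\in\{0,1\}$, and then linearize --- is the right one, and your $z=1$ case and convexity discussion are sound. The gap sits exactly at the step you yourself call the crux. For the rows of Table~\ref{table:feasible_cut_revised} in which $\bar\sigma$ is the constant $\ell_{a}$ (resp.\ $u_{a}$), the function $\mu$ contains the term $q_2(x_2-\bar\sigma)^2/z$, which diverges as $z\to 0$ at any feasible point with $x_2=ax_1\neq\bar\sigma$; such points exist whenever $[\ell_{a},u_{a}]\cap[\ell_{2},u_{2}]$ is a nondegenerate interval. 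Hence your claims that ``$\mu\le q_2x_2^2\le w$ along these rays'' and that the affine right-hand side of \eqref{feas_cut_grad_ver} is ``bounded \dots by the limit of $\mu$ along feasible rays, which equals $q_2x_2^2$'' both fail for those rows: the quadratic function is not even finite there, and a bound of the form $L\le\liminf\mu=+\infty$ gives nothing.

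What actually saves the gradient cut at $z=0$ --- and what your proposal never invokes --- is the sign information carried by the conditions of the table. The linearization of $(x_2-\bar\sigma)^2/z$ at $(\bar x_2,\bar z)$, evaluated at a point with $z=0$, equals $2(\bar x_2-\bar\sigma)(x_2-\bar\sigma)/\bar z$; the row conditions $\bar x_2<\ell_{a}$ (resp.\ $\bar x_2>u_{a}$), combined with $\ell_{a}\le x_2=ax_1\le u_{a}$ at every feasible $z=0$ point, force this product to be nonpositive, while the linearizations of $z$ and of $ry^2/z$ vanish at $(y,z)=(0,0)$; the gradient cut therefore evaluates to at most $q_2x_2^2-q_2(x_2-\bar\sigma)^2\le q_2 x_2^2\le w$. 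A complete proof must either carry out this row-by-row verification of \eqref{feas_cut_grad_ver} at $z=0$, or replace your intermediate claim by the observation that $\nabla\mu(\bar{\bm{\chi}})$ is a subgradient at $\bar{\bm{\chi}}$ of the finite convex value function $\phi(\bm{\chi})=\min_{\tilde p}\{\,\cdot\,\}$ defined by Corollary~\ref{cor:1dim_conv_ext}, whose epigraph contains $\conv(\myZ_1)$ and which does equal $q_2x_2^2$ at $z=0$. As written, your argument closes only for the rows with $\bar\sigma=\bar x_2$ or $\bar\sigma$ an affine-fractional expression in $(\bar x_1,\bar z)$ (where the numerator $x_2-\sigma(\bm{\chi})$ genuinely vanishes at feasible $z=0$ points), and not for the constant-$\bar\sigma$ rows.
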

\begin{table}[ht]
\caption{Quadratic cuts for the one-dimensional case.}
\label{table:feasible_cut_revised}
\captionsetup{justification=centering}
\centering
\begin{tabular}{c|c|c}
     \hline     \hline\textbf{$\bar{x}_2$}&\textbf{$\bar{\sigma}$}&Conditions\\
     \hline
     \multirow{3}{*}{$\bar{x}_2 < \bar{\ell}$}& \makecell{$\bar{\ell} = \ell_{a}$ $\star$} & \makecell{$\bar{x}_2 < \ell_{a}$,\\ $\bar{x}_2 \leq b\bar{y}+c\bar{z}+u_{a}\bar{z}+\ell_{a}(1-\bar{z})$}\\
     \cline{2-3}
     &\makecell{$\bar{\ell} = \frac{a\bar{x}_1 - u_{a}\bar{z}}{1-\bar{z}}$} & \makecell{$\bar{x}_2\bar{z} > b\bar{y}+c\bar{z}+u_{a}\bar{z}$,\\$\bar{x}_2 \geq b\bar{y}+c\bar{z}+u_{a}\bar{z} + \ell_{a}(1-\bar{z})$ $\star$}\\
     
     \hline
     \makecell{$\bar{\ell} \leq \bar{x}_2 \leq \bar{u}$ }&$\bar{x}_2$& \makecell{ $b\bar{y}+c\bar{z}+\ell_{a}\bar{z} \leq \bar{x}_2\bar{z} \leq b\bar{y}+c\bar{z}+u_{a}\bar{z}$,\\
     $\bar{x}_2 \leq u_{a}$ $\dagger$, $\bar{x}_2 \geq \ell_{a}$ $\star$}\\
     
     \hline
     \multirow{3}{*}{$\bar{x}_2 > \bar{u}$} &\makecell{$\bar{u} = u_{a}$ $\dagger$}& \makecell{$\bar{x}_2 >u_{a}$,\\ $\bar{x}_2 \geq b\bar{y}+ c\bar{z}+\ell_{a}\bar{z} + u_{a}(1-\bar{z})$}\\
     \cline{2-3}
     &\makecell{$\bar{u} = \frac{a\bar{x}_1 - \ell_{a}\bar{z}}{1-\bar{z}}$}&\makecell{$\bar{x}_2\bar{z} < b\bar{y}+c\bar{z}+\ell_{a}\bar{z}$, \\ $\bar{x}_2 \leq b\bar{y}+c\bar{z}+\ell_{a}\bar{z} + u_{a}(1-\bar{z})$ $\dagger$}\\
     \hline \hline
\end{tabular}

\centering
\vspace{2mm}
($\star$) : Only when $\ell_{a} \geq \ell_{2}$ \quad 
($\dagger$) : Only when $u_{a} \leq u_{2}$
\end{table}

The cut-generation process for the one-period and one-dimensional case in this section forms the basis for the subsequent generalizations.
\noindent Note that all results in this section hold for $d_y \ge 1$ as well.

\subsection{Multi-dimensional state and control variables} \label{sec:cut_general1}

We now extend the cut-generation process to the single-period HCP with multi-dimensional state and control variables, $d_x, d_y \ge 1$. For nonnegative diagonal matrix $\bm{Q}_2$ and $\bm{Q}_1, \bm{R} \succeq 0$, consider 
\begin{equation}
    \begin{aligned}
    \min \ \ &w + \bm{x}_{1}^\top \bm{Q}_1 \bm{x}_{1}\\
    \text{s.t. } \ & \bm{x}_2^{\top}\bm{Q}_2 \bm{x}_2 + \bm{y}^{\top}\bm{R} \bm{y} + z  \leq w\\
    & \bm{x}_2= \bm{A}\bm{x}_{1} + \bm{B} \bm{u} + \bm{c} z,\\
    & \bm{g} z \leq \bm{y} \leq \bm{h} z,\\
    & \bm{\ell}_{t} \leq \bm{x}_{t} \leq \bm{u}_{t}, \quad t = 1,2 \\
    & z \in \{0,1\},\ \bm{x}_{1},\bm{x}_2 \in \mathbb{R}^{d_x}, \bm{y} \in \mathbb{R}^{d_y}, \ w \in \mathbb{R}.
    \end{aligned}
    \label{Form:general case}
\end{equation}
Denote $\myY_1$ as the feasible set and (R\ref{Form:general case}) as the continuous relaxation of \eqref{Form:general case} with $z \in [0,1]$.

\pagebreak

The convex hull representation in Proposition~\ref{prop:1dim_nonlinear_ext} and its conic quadratic reformulation in Corollary~\ref{cor:1dim_conv_ext} can be extended to the multi-dimensional case as follows.
\begin{corollary} \label{cor:multidim_nonlinear_ext}
The convex hull $\myY_1$ can be stated as:
\begin{subequations}
\begin{align}
    &\conv (\myY_1) = \Big\{ (\bm{x}_{1}, \bm{x}_2, \bm{y}, z, w): \text{constraints in (R\ref{Form:general case}) and }   \nonumber\\
    & \quad \exists \bm{p} \in \bbR^{d_x} \text{ s.t. } w \geq \left(\frac{1}{z}-1\right) \left(\bm{x}_2 - \bm{A} \bm{p}\right)^\top \bm{Q}_2 \left(\bm{x}_2 - \bm{A} \bm{p}\right) \nonumber \\
    &\qquad \qquad \qquad \qquad \quad + \bm{x}_2^\top \bm{Q}_2 \bm{x}_2 +z + \frac{1}{z} \bm{y}^\top \bm{R} \bm{y}  \label{multidim_nonlinear_ext_cut} \\
    & \qquad \bm{\ell}_{1} \leq \bm{p} \leq \bm{u}_{1}, \ \frac{\bm{x}_{1}- \bm{u}_{1}z}{1-z} \leq \bm{p} \leq \frac{\bm{x}_{1} - \bm{\ell}_{1}z}{1-z}, \label{multidim_ext_bd1}\\ 
    & \qquad \bm{\ell}_{2} \leq \bm{A} \bm{p} \leq \bm{u}_{2}, \ \frac{\bm{x}_2 - \bm{u}_{2} z}{1-z} \leq \bm{A} \bm{p} \leq \frac{\bm{x}_2 - \bm{\ell}_{2}z}{1-z}\Big\}. \label{multidim_ext_bd2}
\end{align}
\end{subequations}
Moreover, $\conv (\myY_1)$  can be reformulated using conic quadratic inequalities after defining $\tilde{\bm{p}} = (1-z) \bm{p} \in \bbR^{d_x}$:
\begin{align*}
    &\conv (\myY_1) =  \Big\{ (\bm{x}_{1}, \bm{x}_2, \bm{y}, z, w): \text{constraints in (R\ref{Form:general case}) and } \\
    & \qquad \exists \tilde{\bm{p}} \in \bbR^{d_x}, \ \tilde{w}_1, \tilde{w}_2 \in \bbR \text{ s.t. } w \geq  \tilde{w}_1 + \tilde{w}_2 + z, \\
    & \qquad \tilde{w}_1 z \geq (\bm{x}_2 - \bm{A}\tilde{\bm{p}})^\top \bm{Q}_2 (\bm{x}_2 - \bm{A}\tilde{\bm{p}}) + \bm{y}^\top \bm{R} \bm{y}, \\
    & \qquad \Tilde{w}_2 (1-z) \geq \tilde{\bm{p}}^\top \bm{A}^\top \bm{Q}_2 \bm{A} \tilde{\bm{p}}, \\ 
    & \qquad \bm{\ell}_{1} (1-z) \leq \tilde{\bm{p}} \leq \bm{u}_{1} (1-z), \  \bm{\ell}_{1}z \leq \bm{x}_{1} -\Tilde{\bm{p}} \leq \bm{u}_{1} z, \\ 
    & \qquad \bm{\ell}_{2} (1-z) \leq \bm{A}\tilde{\bm{p}} \leq \bm{u}_{2} (1-z), \ \bm{\ell}_{2}z \leq \bm{x}_2 - \bm{A}\Tilde{\bm{p}} \leq \bm{u}_{2} z\Big\}.
\end{align*}
\end{corollary}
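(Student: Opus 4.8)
The plan is to mimic the proof of Proposition~\ref{prop:1dim_nonlinear_ext} essentially verbatim, replacing scalar quantities by their vector/matrix analogues and using that $\bm{Q}_2$ is diagonal and positive semidefinite (so the quadratic forms split coordinate-wise and all perspective manipulations stay convex). First I would write $\myY_1 = \myY_1^0 \cup \myY_1^1$, where $\myY_1^0$ is the slice $z=0$ (so $\bm{y}=\bm{0}$, $\bm{x}_2 = \bm{A}\bm{x}_1$, $w \ge \bm{x}_2^\top \bm{Q}_2 \bm{x}_2$, box constraints on $\bm{x}_1,\bm{x}_2$) and $\myY_1^1$ is the slice $z=1$ (so $\bm{x}_2 = \bm{A}\bm{x}_1 + \bm{B}\bm{u} + \bm{c}$, $\bm{g} \le \bm{y} \le \bm{h}$, $w \ge \bm{x}_2^\top \bm{Q}_2 \bm{x}_2 + \bm{y}^\top \bm{R}\bm{y} + 1$, box constraints). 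Each of these sets is convex because the defining quadratic inequalities are convex (PSD cost matrices) and the remaining constraints are linear; hence $\conv(\myY_1)$ consists exactly of convex combinations $(1-\lambda)\bm{v}^0 + \lambda \bm{v}^1$ with $\bm{v}^0 \in \myY_1^0$, $\bm{v}^1 \in \myY_1^1$, $\lambda \in [0,1]$.

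Next I would set $\lambda = z$, introduce $\bm{p} := \bm{x}_1^0$ (the $z=0$ copy of the state), and solve for the $z=1$ copies: $\bm{x}_1^1 = (\bm{x}_1 - z\bm{p})/(1-z)$, $\bm{y}^1 = \bm{y}/z$, and $\bm{x}_2^1 = \bm{A}\bm{x}_1^1 + \bm{B}\bm{y}^1 + \bm{c} = \bm{A}\big((\bm{x}_1 - (1-z)\bm{p})/z\big) + \bm{B}\bm{y}/z + \bm{c}$, while $\bm{x}_2^0 = \bm{A}\bm{p}$. Substituting $w \ge (1-z)(\bm{A}\bm{p})^\top \bm{Q}_2 (\bm{A}\bm{p}) + z\big((\bm{x}_2^1)^\top \bm{Q}_2 \bm{x}_2^1 + (\bm{y}^1)^\top \bm{R}\bm{y}^1 + 1\big)$ and using $\bm{x}_2 = (1-z)\bm{A}\bm{p} + z\bm{x}_2^1$ to eliminate $\bm{x}_2^1 = (\bm{x}_2 - (1-z)\bm{A}\bm{p})/z$, the standard perspective identity $(1-z)\|\bm{u}\|^2 + z\|(\bm{s} - (1-z)\bm{u})/z\|^2_{\bm{Q}_2} = \|\bm{s}\|_{\bm{Q}_2}^2 + (1/z - 1)\|\bm{s} - \bm{A}\bm{p}\|_{\bm{Q}_2}^2$ (applied with $\bm{s} = \bm{x}_2$, $\bm{u} = \bm{A}\bm{p}$; valid since $\bm{Q}_2 \succeq 0$) yields \eqref{multidim_nonlinear_ext_cut}, with the $\bm{y}^\top\bm{R}\bm{y}/z$ term coming directly from $z(\bm{y}/z)^\top\bm{R}(\bm{y}/z)$. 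The box-membership requirements $\bm{\ell}_1 \le \bm{x}_1^0, \bm{x}_1^1 \le \bm{u}_1$ and $\bm{\ell}_2 \le \bm{x}_2^0, \bm{x}_2^1 \le \bm{u}_2$ translate into \eqref{multidim_ext_bd1} and \eqref{multidim_ext_bd2} after clearing the positive denominators $1-z$ and $z$; the boundary cases $z=0$ and $z=1$ are handled by a closure/limiting argument exactly as in the one-dimensional proof. This establishes the first display; the conic reformulation then follows immediately by substituting $\bm{p} = \tilde{\bm{p}}/(1-z)$ and introducing epigraph variables $\tilde{w}_1, \tilde{w}_2$ for the two perspective-quadratic terms, precisely as Corollary~\ref{cor:1dim_conv_ext} is deduced from Proposition~\ref{prop:1dim_nonlinear_ext}.

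The main obstacle is purely bookkeeping rather than conceptual: one must check that the perspective algebra, which in the scalar case relied on completing the square with a scalar coefficient $q_2$, goes through with the matrix $\bm{Q}_2$. Diagonality of $\bm{Q}_2$ makes this transparent — the identity decouples into $d_x$ independent scalar identities, one per coordinate of $\bm{x}_2$ and $\bm{A}\bm{p}$ — but I would state it in matrix form for brevity and remark that PSD-ness is all that is truly needed for the convexity of each quadratic term $\tilde{w}_1 z \ge \|\bm{x}_2 - \bm{A}\tilde{\bm{p}}\|^2_{\bm{Q}_2} + \|\bm{y}\|^2_{\bm{R}}$ and $\tilde{w}_2(1-z) \ge \tilde{\bm{p}}^\top\bm{A}^\top\bm{Q}_2\bm{A}\tilde{\bm{p}}$ (these are rotated-cone-representable). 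A secondary point to handle with a little care is that $\bm{A}$ need not be invertible, so $\bm{p}$ genuinely lives in $\bbR^{d_x}$ and cannot be recovered from $\bm{A}\bm{p}$; this is why both $\bm{p}$ and $\bm{A}\bm{p}$ appear in the constraint list, and the projection argument must keep $\bm{p}$ as a free existentially-quantified variable — exactly as in the scalar statement. Given all of this, I would present the proof as a short "the argument is identical to that of Proposition~\ref{prop:1dim_nonlinear_ext}, replacing $q_2$ by $\bm{Q}_2$, $r$ by $\bm{R}$, $a$ by $\bm{A}$, $b$ by $\bm{B}$, $c$ by $\bm{c}$, $p$ by $\bm{p}$" with the key perspective identity spelled out once.
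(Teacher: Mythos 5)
Your proposal is correct and follows exactly the route the paper intends: the paper offers no separate proof of Corollary~\ref{cor:multidim_nonlinear_ext}, presenting it as the verbatim vector/matrix analogue of Proposition~\ref{prop:1dim_nonlinear_ext} and Corollary~\ref{cor:1dim_conv_ext}, and your two-set disjunction, the perspective identity, the bound translation, and the substitution $\bm{p}=\tilde{\bm{p}}/(1-z)$ all match. One small slip (which, amusingly, also appears in the paper's own proof of Proposition~\ref{prop:1dim_nonlinear_ext}): from $\bm{x}_1=(1-z)\bm{p}+z\bm{x}_1^1$ one gets $\bm{x}_1^1=(\bm{x}_1-(1-z)\bm{p})/z$, not $(\bm{x}_1-z\bm{p})/(1-z)$; you use the correct expression in the very next formula and in deriving \eqref{multidim_ext_bd1}, so nothing downstream is affected.
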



As in the previous section, the auxiliary variable $\bm{p}$ can be projected out as follows:
\begin{align*}
    &w \geq \left(\frac{1}{z}-1\right)\tau(\bm{x}_{1},\bm{x}_2, \bm{y},z) + \bm{x}_2^{\top}\bm{Q}_2\bm{x}_2 + z + \frac{\bm{y}^{\top}\bm{R} \bm{y}}{z}
\end{align*}
where $\tau(\bm{x}_{1},\bm{x}_2, \bm{y},z)$ is the optimal value of the problem
\begin{equation}
    \begin{aligned}
      \underset{\bm{p} \in \mathbb{R}^{d_x}}{\min} \ & (\bm{x}_2 - \bm{A} \bm{p})^{\top} \bm{Q}_2 (\bm{x}_2 - \bm{A} \bm{p} )\\
    \text{s.t. } \ & \eqref{multidim_ext_bd1}, \eqref{multidim_ext_bd2}.
    \end{aligned}
    \label{Form:general sub prob}
\end{equation}
However, a closed-form optimal solution $\bm{p}^*$ cannot be obtained as the bounds on $\bm{A} \bm{p}$ may not be decoupled for $p$. Therefore, we resort to the following relaxation. Define $\upperh, \ \lowerh: \mathbb{R}^{d_x}\times \mathbb{R}^{d_x} \rightarrow \mathbb{R}^{d_x}$ as
\begin{align*}
    \left(\upperh(\bm{u}, \bm{\ell} )\right)_i := \sum\limits_{j:A_{ij} > 0} A_{ij}u_{j}+ \sum\limits_{k:A_{ik}<0} A_{ik} \ell_{k},\\
    \left(\lowerh(\bm{u}, \bm{\ell})\right)_i := \sum\limits_{j:A_{ij}>0} A_{ij}\ell_{j}+ \sum\limits_{k:A_{ik}<0} A_{ik}u_{k},
\end{align*}
for $i \in \left[d_x\right]$, and let $\bm{u}_{\bm{A}}: = \upperh(\bm{u},\bm{\ell})$ and $\bm{\ell}_{\bm{A}} = \lowerh(\bm{u},\bm{\ell})$. Consider two sets $S_1$ and $S_2$ defined as
\begin{align*}
    S_1 := \{ \bm{p} : \bm{\ell}_{1} \leq \bm{p} \leq \bm{u}_{1} \}, \ S_2 := \{\bm{p}: \bm{\ell}_{\bm{A}} \leq \bm{A} \bm{p} \leq \bm{u}_{\bm{A}}\}.
\end{align*}
While $S_1 \subseteq S_2$ holds, $S_2 \subseteq S_1$ may not be true. Therefore, replacing \eqref{multidim_ext_bd1} with 
\begin{equation*}
    \begin{aligned}
        \bm{\ell}_{\bm{A}} \leq \bm{A} \bm{p} \leq \bm{u}_{\bm{A}}, \ \frac{\bm{A}\bm{x}_{1} - \bm{u}_{\bm{A}}z}{1-z} \leq \bm{A}  \bm{p} \leq \frac{\bm{x}_{1} - \bm{\ell}_{\bm{A}}z}{1-z},
    \end{aligned}
\end{equation*}
gives a relaxation of \eqref{Form:general sub prob}. As such, 
define 
\begin{align*}
    \bm{\ell} (\bm{x}_{1},\bm{x}_2,z) &:= \max \bigg\{\bm{\ell}_{\bm{A}},\bm{\ell}_{2},\frac{\bm{A}\bm{x}_{1} - \bm{u}_{\bm{A}}z}{1-z}, \frac{\bm{x}_2 - \bm{u}_{2}z}{1-z} \bigg\}\\
    \bm{u}(\bm{x}_{1},\bm{x}_2,z) &:= \min \bigg\{\bm{u}_{A},\bm{u}_{2},\frac{\bm{A}\bm{x}_{1} - \bm{\ell}_{\bm{A}}z}{1-z}, \frac{\bm{x}_2 - \bm{\ell}_{2}z}{1-z} \bigg\}
\end{align*}
where the max/min functions are applied elementwise, and let $\bm{\sigma} = \bm{A} \bm{p}$. Consider the relaxation of \eqref{Form:general sub prob}
\begin{equation}
    \begin{aligned}
        \underset{\bm{\sigma} \in \mathbb{R}^{d_x}}{\min} \ &(\bm{x}_2 - \bm{\sigma})^{\top} \bm{Q}_2 (\bm{x}_2 - \bm{\sigma})^{\top}\\
        \text{s.t.} \quad &\bm{\ell} (\bm{x}_{1},\bm{x}_2,z) \leq \bm{\sigma} \leq \bm{u}(\bm{x}_{1},\bm{x}_2,z), \\
        & \bm{\sigma} = \bm{A} \bm{x} \hspace{1mm} \Leftrightarrow \hspace{1mm} \bm{\sigma} \in C(\bm{A})
    \end{aligned}
    \label{Form:general relaxed sub prob}
\end{equation}
where $C(\bm{A})$ represents the column space of $\bm{A}$. If $\bm{A}$ is full rank, $\bm{\sigma} \in C(\bm{A})$ can be ignored. Even if $\bm{A}$ is not full rank, removing $\bm{\sigma} \in C(\bm{A})$ still gives a relaxation of \eqref{Form:general relaxed sub prob}. Once $\bm{\sigma} \in C(\bm{A})$ is relaxed, \eqref{Form:general relaxed sub prob} can be decomposed since $\bm{Q}_2$ is diagonal. Thus, we obtain 
\begin{align}
    \tau'(\bm{x}_{1},\bm{x}_2,z) = \sum_{i=1}^{d_x} \  &\underset{\sigma_i \in \bbR}{\min}  \ q_i(x_{2,i} - \sigma_i)^2 \label{Form:sep general relaxed sub prob}\\
    &\text{ s.t.} \ \ \ell (\bm{x}_{1},\bm{x}_2,z)_i \leq \sigma_i \leq u (\bm{x}_{1},\bm{x}_2,z)_i,  \nonumber
\end{align}
where $q_i:=\diag(\bm{Q}_2)_i$ for $i \in [d_x]$.
Each minimization in \eqref{Form:sep general relaxed sub prob} is in the form of \eqref{MIN_PROB}. Therefore, the same cut-generation process can be employed. 


\subsection{Linear system constraint with a constant vector} \label{sec:cut_general2}
Until now, we have considered the linear dynamics
\begin{align*}
    \bm{x}_2 = \bm{A}\bm{x}_{1} + \bm{B} \bm{y} + \bm{c} z+\bm{f},
\end{align*}
with $\bm{f} = \bm{0}$. We demonstrate here that the same approach can be applied for any $\bm{f} \in \mathbb{R}^{d_x}$. For simplicity of notation, consider the case of $d_x, d_y =1$. Again, with the disjunctive programming, the feasible set $\myZ_1$ can be represented as the union of two convex sets as in Section \ref{sec:cut_simple}. The two sets are equivalent to $\myZ_1^0$ and $\myZ_1^1$, except that the linear system constraints are revised to $x_{2}^0=ax_{1}^0 + f$  and $x_{2}^1 = ax_{1}^1 + by^1 + c + f$, respectively. 
Applying projections, we get
\begin{align*}
    w \geq q_2\bigg(\frac{1}{z}-1\bigg)\big(x_2 - ax_{1}^0 - d\big)^2 + q_2 x_2^2 +z + \frac{ry^2}{z},
\end{align*} 
with bounds $\ell (\bm{\chi}) \leq ax_{1}^0 + f \leq u(\bm{\chi})$ for $\bm{\chi}=(x_1, x_2, z)$,
\begin{align*}
    \ell (\bm{\chi}) {:=} \max \bigg\{\ell_{a}+f,\ell_{2},\frac{x_2-u_{2}z}{1-z}, \frac{(ax_1 +f) - \left(u_{a}+f\right) z}{1-z}\bigg\} \ \\
    u(\bm{\chi}) {:=} \min \bigg\{u_{a}+f,u_{2}, \frac{x_2 - \ell_{2}z}{1-z}, \frac{(ax_1 +f) -\left(\ell_{a}+f\right) z}{1-z}\bigg\} \cdot
\end{align*}
After replacing $ax_1$, $\ell_{a}$, $u_{a}$ with $ax_1+f$, $\ell_{a}+f$, $u_{a}+f$, respectively, the cut generation remains the same as in Section \ref{sec:cut_simple}. The same reasoning applies to the case of multi-dimensional state and control variables as well.

\subsection{Multi-dimensional binary decision variable
} \label{sec:cut_general3}
In this section, we further generalize the cut generation procedure to the HCP with multi-dimensional indicators $z \in \{0,1\}^{d_z}, \ d_z \ge 1$. Consider
\begin{equation}
    \begin{aligned}
    \min \ &w + \bm{x}_{1}^\top \bm{Q}_1 \bm{x}_{1}\\
    \text{s.t. } & \bm{x}_2^{\top}\bm{Q}_2 \bm{x}_2 + \bm{y}^{\top} \bm{R} \bm{y} + \bm{z}^\top \bm{S} \bm{z}  \leq w\\
    & \bm{x}_2= \bm{A}\bm{x}_{1} + \bm{B} \bm{y}+\bm{C} \bm{z},\\
    & \mathbbm{1}^\top \bm{z} \leq 1,\\
    & \bm{G} \bm{z} \leq \bm{y} \leq \bm{H} \bm{z},\\
    & \bm{\ell}_{t} \leq \bm{x}_{t} \leq \bm{u}_{t}, \quad t=1,2\\
    & \bm{z} \in \{0,1\}^{d_z}, \ \bm{x}_{1}, \bm{x}_2 \in \bbR^{d_x}, \ \bm{y} \in \bbR^{d_y}, w \in \bbR. \hspace{-6mm}
    \end{aligned}
    \label{Form:multi-dz}
\end{equation}
Let $\myX_1$ be the feasible set of \eqref{Form:multi-dz}.

Two disjunction methods are considered. A direct approach is to partition $\myX_1$ into ($d_z +1$)-disjunctions for each possible value of $\bm{z}$ and the other method is to split $K:=\{1,\ldots,d_z\}$ into two subsets. 

\vskip 1mm

\subsubsection{(\texorpdfstring{$d_z + 1$}{dz+1})-way disjunction} \label{sec:cut_general3_dz+1} 
The feasible set $\myX_1$ is the union of ($d_z + 1$) convex sets that each corresponds to the feasible set of \eqref{Form:multi-dz} for a particular $\bm{z}$ value: $X_1^0$ when $\bm{z}=\bm{0}$ and $Z_1^k$ when $\bm{z}=\bm{e}_k$, $k \in K$. Then, $(\bm{x}_{1}, \bm{x}_2, \bm{y}, \bm{z},w) \in \conv(\myX_1)$ can be expressed as a convex combination of $(\bm{x}_{1}^k, \bm{x}_2^k, \bm{y}^k , \bm{z}^k, w^k) \in X_1^k$, $k \in \{0\} \cup K$: $\exists \lambda^0, \lambda^1, \ldots, \lambda^{d_z} \in [0,1]$ such that $\sum_{k=0}^{d_z} \lambda^k = 1$ and
\begin{align*}
    (\bm{x}_{1}, \bm{x}_2, \bm{y}, \bm{z},w) = \sum_{k=0}^{d_z} \lambda^k (\bm{x}_{1}^k, \bm{x}_2^k, \bm{y}^k , \bm{z}^k, w^k).
\end{align*}
Then a valid cut in an extended space can be generated 
\begin{equation*}
    \begin{aligned}
        &w \geq \sum_{k \in K} \bm{z}_k \Big((\bm{A}\bm{x}_{1}^k + \bm{B} \bm{y}^k + \bm{c}_k)^\top \bm{Q}_2 (\bm{A}\bm{x}_{1}^k + \bm{B} \bm{y}^k +\bm{c}_k ) \\
        &\hspace{1.6cm} + {\bm{y}^k}^\top \bm{R} \bm{y}^k + S_{kk} \Big) + (1- \mathbbm{1}^\top \bm{z}) {(\bm{A} \bm{x}_{1}^0)}^\top \bm{Q}_2 (\bm{A} \bm{x}_{1}^0).
    \end{aligned}
\end{equation*}
We have no simple way of projecting this cut to the original variable space. Instead, the extended formulation needs to be directly solved. The ($d_z+1$)-way disjunction method yields a tighter relaxation bound but requires many auxiliary binary variables, possibly leading to long computation. The SPP model introduced in \cite{marcucci2024shortest}, which is further discussed in Section \ref{sec:experiment_disc}, employs this approach for every two consecutive periods.

\vskip 1mm

\subsubsection{Two-way disjunction} \label{sec:cut_general3_2way} 
Split the index set $K$ into two sets, $K_1 \subset K$ and $K_2 = K\backslash K_1$. 
Let $J = \{1,\ldots, d_y\}$ and define $J_1 = J \backslash \left\{ j \in J: y_j = 0 \text{ if } z_k = 0, \ \forall k \in K_2\right\}$, $J_2 = J \backslash \left\{ j \in J: y_j = 0 \text{ if } z_k = 0, \ \forall k \in K_1\right\}$. Note that $(J_1, J_2)$ need not be a partitioning of $J$. Denote $\bm{z}\kf \in \{0,1\}^{|K_1|}$ and $\bm{z}\ks \in \{0,1\}^{|K_2|}$ as the partial vectors of $\bm{z}$ with indices in $K_1$ and $K_2$, respectively. Define $\bm{y}\jf \in \mathbb{R}^{|J_1|}$ and $\bm{y}\js \in \mathbb{R}^{|J_2|}$ in the same manner. Then, the bound constraints on $\bm{y}$ can be split into bounds on  $\bm{y}\jf \in \mathbb{R}^{|J_1|}$ and $\bm{y}\js \in \mathbb{R}^{|J_2|}$
\begin{align*}
    \bm{G} \bm{z} \leq \bm{y} \leq \bm{H} \bm{z} \ \Leftrightarrow \ \makecell{\bm{G}\jf \bm{z}\kf \leq \bm{y}\jf \leq \bm{H}\jf \bm{z}\kf \\ \bm{G}\js \bm{z}\ks \leq \bm{y}\js \leq \bm{H}\js \bm{z}\ks}
    \end{align*}
where $\bm{G}_{J_i}, \bm{H}_{J_i} \in \mathbb{R}^{|J_i| \times |K_i|}$ are submatrices of $\bm{G}, \bm{H}$ with $(j,k)$-th elements $\forall j \in J_i, k \in K_i$ for $i=1,2$. 
Then, let $\bm{B}_{J_i} \in \mathbb{R}^{d_x \times |J_i|}$ be a submatrix of $\bm{B}$ consisting of the $j$-th columns for $j \in J_i$, and let $\bm{R}_{J_i} $ be the principal submatrix induced by set $J_i$, for $i=1,2$. Likewise, $\bm{C}_{K_i} \in \mathbb{R}^{d_x \times |K_i|}$ is a submatrix of $\bm{C}$ consists of $k$-th columns for $\forall k \in K_i$, and $\bm{S}_{K_i}$ is the principal submatrix of $\bm{S}$ defined by indices in $K_i$, for $i=1,2$. Then, \eqref{Form:multi-dz} can be reformulated as 
\begin{equation}
    \begin{aligned}
        \min \ &w + \bm{x}_{1}^\top \bm{Q}_1 \bm{x}_{1}\\
        \text{s.t.} \ &w \geq \bm{x}_2^\top \bm{Q}_2 \bm{x}_2 + \bm{y}\jf^\top \bm{R}\jf \bm{y}\jf + \bm{y}\js^\top \bm{R}\js \bm{y}\js \\
        & \qquad + \bm{z}\kf^\top \bm{S}\kf \bm{z}\kf + \bm{z}\ks^\top \bm{S}\ks \bm{z}\ks \\
        & \bm{x}_2 = \bm{A}\bm{x}_{1} +\bm{B}\jf \bm{y}\jf + \bm{B}\js \bm{y}\js + \bm{C}\kf \bm{z}\kf + \bm{C}\ks \bm{z}\ks \\
        & \bm{G}_{J_i} \bm{z}_{K_i} \leq \bm{y}_{J_i} \leq \bm{H}_{J_i} \bm{z}_{K_i}, \quad i=1,2\\
        & \bm{\ell}_{t} \leq \bm{x}_t \leq \bm{u}_{t}, \quad t=1,2\\
        & \mathbbm{1}_{|K_1|}^\top \bm{z}\kf + \mathbbm{1}_{|K_2|}^\top \bm{z}\ks \leq 1\\
        & \bm{z}\kf \in \{0,1\}^{|K_1|}, \ \bm{z}\ks \in \{0,1\}^{|K_2|} \\
        & \bm{y}\jf \in \mathbb{R}^{|J_1|}, \ \bm{y}\js \in \mathbb{R}^{|J_2|}\\
        & \bm{x}_{1},\bm{x}_2 \in \mathbb{R}^{d_x}, \ w \in \mathbb{R}.
    \end{aligned}
    \label{multi-dz 2-way}
\end{equation}
Let $\bar{\myX}_1$ denote the feasible set of \eqref{multi-dz 2-way} and
(R\ref{multi-dz 2-way}) be its convex relaxation with $\bm{z} \in [0,1]^{d_z}$.
Additionally, let (R\ref{multi-dz 2-way}-2) be another relaxation of \eqref{multi-dz 2-way} with $\bm{z}\kf \in [0,1]^{|K_1|}$ and $\bm{z}\ks \in [0,1]^{|K_2|}$, while $\lambda := \mathbbm{1}_{|K_1|}^\top \bm{z}\kf \in \{0,1\}$. Denote its feasible set as $\widetilde{\myX}_1$. 
Then, $\myX_1 \subseteq \widetilde{\myX}_1 \subseteq \bar{\myX}_1$ for any $K_1 \subseteq K$. We will generate cuts that is tight for $\Tilde{\myX}_1$, so that they cut off points in $\bar{\myX}_1 \backslash \Tilde{\myX}_1$.

A convex hull representation of $\Tilde{\myX}_1$ can be formulated in a similar manner as in Corollary~\ref{cor:multidim_nonlinear_ext}.

\begin{corollary} \label{cor:conv_tilde_Z1}
The convex hull of $\tilde{\myX}_1$ is described as
\begin{subequations}
\begin{align}
    &\conv\left(\tilde{\myX}_1\right) = \Bigg\{ (\bm{x}_{1}, \bm{x}_2, \bm{y}\jf, \bm{y}\js, \bm{z}\kf, \bm{z}\ks, w): \nonumber\\
    &\ \text{constraints in (R\ref{multi-dz 2-way}) and } \exists \bm{p} \in \bbR^{d_x} \text{ s.t. } \nonumber\\
    & \   \sigma = \bm{A} \bm{p} + \frac{\bm{B}\js \bm{y}\js+ \bm{C}\ks \bm{z}\ks}{1-\lambda}, \ \lambda = \mathbbm{1}_{|K_1|}^\top \bm{z}\kf, \label{sigma_def_multi_dz}\\
    &\ w \geq \left(\frac{1}{\lambda}-1\right) \left(\bm{x}_2 - \bm{\sigma} \right)^\top \bm{Q}_2 \left(\bm{x}_2 - \bm{\sigma} \right) + \bm{x}_2^\top \bm{Q}_2 \bm{x}_2 \label{multi_dz_nonlinear_ext_cut} \\
    &\qquad + \frac{\bm{y}\jf^\top \bm{R}\jf \bm{y}\jf + \bm{z}\kf^\top \bm{S}\kf \bm{z}\kf}{\lambda} + \frac{\bm{y}\js^\top \bm{R}\js \bm{y}\js + \bm{z}\ks^\top \bm{S}\ks \bm{z}\ks}{1-\lambda} , \nonumber \\
    & \ \bm{\ell}_{1} \leq \bm{p} \leq \bm{u}_{1}, \ \frac{\bm{x}_{1}- \bm{u}_{1}\lambda}{1-\lambda} \leq \bm{p} \leq \frac{\bm{x}_{1} - \bm{\ell}_{1} \lambda}{1-\lambda}, \label{multi_dz_ext_bd1}\\ 
    & \ \bm{\ell}_{2} \leq \bm{\sigma} \leq \bm{u}_{2}, \ \frac{\bm{x}_2 - \bm{u}_{2} \lambda}{1-\lambda} \leq \bm{\sigma} \leq \frac{\bm{x}_2 - \bm{\ell}_{2}\lambda}{1-\lambda}\Bigg\}, \label{multi_dz_ext_bd2}
\end{align}
\end{subequations}
for $K_1 \subseteq K$, and can be reformulated using conic quadratic inequalities by a change of the variable $\Tilde{\bm{p}} = (1-\lambda) \bm{p}$ as follows:
\begin{align*}
    &\conv (\Tilde{\myX}_1) = 
     \Big\{ (\bm{x}_{1}, \bm{x}_2, \bm{y}\jf, \bm{y}\js, \bm{z}\kf, \bm{z}\ks, w): \text{constrs. in (R\ref{multi-dz 2-way})} \\
    & \text{   and }\exists \tilde{\bm{p}} \in \bbR^{d_x}, \ \tilde{w}_1, \tilde{w}_2 \in \bbR \text{ s.t. } w \geq  \tilde{w}_1 + \tilde{w}_2, \\
    & \quad \tilde{\bm{\sigma}} = \bm{A}\tilde{\bm{p}} + \bm{B}\js \bm{y}\js + \bm{C}\ks \bm{z}\ks, \ \lambda = \mathbbm{1}_{|K_1|}^\top \bm{z}\kf, \\
    & \quad \tilde{w}_1 \lambda \geq (\bm{x}_2 - \tilde{\bm{\sigma}})^\top \bm{Q}_2 (\bm{x}_2 - \tilde{\bm{\sigma}})+ \bm{y}\jf^\top \bm{R}\jf \bm{y}\jf + \bm{z}\kf^\top \bm{S}\kf \bm{z}\kf, \\
    & \quad \Tilde{w}_2 (1-\lambda) \geq \tilde{\bm{\sigma}}^\top \bm{Q}_2 \tilde{\bm{\sigma}} + \bm{y}\js^\top \bm{R}\js \bm{y}\js + \bm{z}\ks^\top \bm{S}\ks \bm{z}\ks, \\ 
    & \quad \bm{\ell}_{1} (1-\lambda) \leq \tilde{\bm{p}} \leq \bm{u}_{1} (1-\lambda), \  \bm{\ell}_{1}\lambda \leq \bm{x}_{1} -\Tilde{\bm{p}} \leq \bm{u}_{1} \lambda, \\ 
    & \quad \bm{\ell}_{2} (1-\lambda) \leq \tilde{\bm{\sigma}} \leq \bm{u}_{2} (1-\lambda), \ \bm{\ell}_{2}\lambda \leq \bm{x}_2 - \Tilde{\bm{\sigma}} \leq \bm{u}_{2} \lambda\Big\}.
\end{align*}
\end{corollary}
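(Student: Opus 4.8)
The plan is to mirror the proofs of Proposition~\ref{prop:1dim_nonlinear_ext} and Corollary~\ref{cor:multidim_nonlinear_ext}, with the scalar indicator replaced by the aggregated binary $\lambda = \mathbbm{1}_{|K_1|}^\top \bm{z}\kf$. First I would show that $\tilde{\myX}_1$ is the union of two convex sets. Since $\bm{z}\kf, \bm{z}\ks \ge \bm{0}$, $\lambda \in \{0,1\}$, and $\mathbbm{1}_{|K_1|}^\top \bm{z}\kf + \mathbbm{1}_{|K_2|}^\top \bm{z}\ks \le 1$, the value $\lambda = 0$ forces $\bm{z}\kf = \bm{0}$, and hence $\bm{y}\jf = \bm{0}$ via $\bm{G}\jf \bm{z}\kf \le \bm{y}\jf \le \bm{H}\jf \bm{z}\kf$, whereas $\lambda = 1$ forces $\mathbbm{1}_{|K_2|}^\top \bm{z}\ks \le 0$, hence $\bm{z}\ks = \bm{0}$ and $\bm{y}\js = \bm{0}$. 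Thus the slice $\tilde{\myX}_1^0$ (where $\lambda = 0$) and the slice $\tilde{\myX}_1^1$ (where $\lambda = 1$) are each the intersection of a polyhedron with a convex quadratic epigraph, so they are convex and $\conv(\tilde{\myX}_1) = \conv(\tilde{\myX}_1^0 \cup \tilde{\myX}_1^1)$.

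Next I would write a point of $\conv(\tilde{\myX}_1)$ as $(1-\mu)\bm{v}^0 + \mu \bm{v}^1$ with $\bm{v}^i \in \tilde{\myX}_1^i$ and $\mu \in [0,1]$; matching the $\bm{z}\kf$ coordinates gives $\mu = \lambda$. Eliminating the branch copies through $\bm{x}_1^1 = (\bm{x}_1 - (1-\lambda)\bm{x}_1^0)/\lambda$, $\bm{y}\jf^1 = \bm{y}\jf/\lambda$, $\bm{z}\kf^1 = \bm{z}\kf/\lambda$, $\bm{y}\js^0 = \bm{y}\js/(1-\lambda)$, $\bm{z}\ks^0 = \bm{z}\ks/(1-\lambda)$ and denoting $\bm{p} := \bm{x}_1^0$, the branch-$0$ dynamics show that $\bm{\sigma}$ defined in \eqref{sigma_def_multi_dz} equals $\bm{x}_2^0$, so $\bm{x}_2 - \bm{\sigma} = \lambda(\bm{x}_2^1 - \bm{x}_2^0)$. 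Feeding this and the perspective identity
\[
(1-\lambda)\bm{a}_0^\top \bm{Q}_2 \bm{a}_0 + \lambda \bm{a}_1^\top \bm{Q}_2 \bm{a}_1 = \bm{a}^\top \bm{Q}_2 \bm{a} + \lambda(1-\lambda)(\bm{a}_1-\bm{a}_0)^\top \bm{Q}_2 (\bm{a}_1-\bm{a}_0),
\]
with $\bm{a} = (1-\lambda)\bm{a}_0 + \lambda\bm{a}_1$, applied coordinatewise (as $\bm{Q}_2$ is diagonal) to $\bm{a}_0 = \bm{x}_2^0 = \bm{\sigma}$, $\bm{a}_1 = \bm{x}_2^1$, $\bm{a} = \bm{x}_2$, collapses the branch-$w$ quadratics into the coefficient $\tfrac{1}{\lambda}-1$ of \eqref{multi_dz_nonlinear_ext_cut}. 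The per-branch box bounds on $\bm{x}_1^0,\bm{x}_1^1,\bm{x}_2^0,\bm{x}_2^1$ translate verbatim into \eqref{multi_dz_ext_bd1}--\eqref{multi_dz_ext_bd2}, while the reformulated dynamics, the $\bm{y}$-bounds $\bm{G}_{J_i}\bm{z}_{K_i}\le\bm{y}_{J_i}\le\bm{H}_{J_i}\bm{z}_{K_i}$, the cardinality inequality, and the $\bm{x}_t$-bounds follow from linearity of the convex combination, i.e., they are exactly the constraints of (R\ref{multi-dz 2-way}). The reverse inclusion reverses these steps, reading $\bm{v}^0,\bm{v}^1$ off a feasible pair $(\bm{v},\bm{p})$ and taking $w^0,w^1$ to be the minimal branch values, whose weighted sum equals the right-hand side of \eqref{multi_dz_nonlinear_ext_cut} by the same identity. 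The boundary values $\lambda\in\{0,1\}$ are handled by the closure argument already used in Proposition~\ref{prop:1dim_nonlinear_ext}: at $\lambda = 1$ the description reduces to the $\lambda = 1$ slice, and at $\lambda = 0$ constraint \eqref{multi_dz_ext_bd2} forces $\bm{\sigma} = \bm{x}_2$, $\bm{z}\kf = \bm{0}$, $\bm{y}\jf = \bm{0}$, so the cut reduces to $w \ge \bm{x}_2^\top \bm{Q}_2 \bm{x}_2 + \bm{y}\js^\top \bm{R}\js \bm{y}\js + \bm{z}\ks^\top \bm{S}\ks \bm{z}\ks$. Finally, the conic quadratic reformulation follows exactly as in Corollary~\ref{cor:1dim_conv_ext} and Corollary~\ref{cor:multidim_nonlinear_ext}, via the change of variables $\tilde{\bm{p}} = (1-\lambda)\bm{p}$, $\tilde{\bm{\sigma}} = (1-\lambda)\bm{\sigma} = \bm{A}\tilde{\bm{p}} + \bm{B}\js\bm{y}\js + \bm{C}\ks\bm{z}\ks$, and auxiliary $\tilde{w}_1,\tilde{w}_2$ set equal to $\lambda$ (resp. $1-\lambda$) times the branch-$w$ quadratics; clearing $\lambda$ and $1-\lambda$ yields the stated conic constraints and $w\ge\tilde{w}_1+\tilde{w}_2$.

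The hard part will be the first step: verifying that the aggregated indicator $\lambda$ genuinely splits $\tilde{\myX}_1$ into only two convex pieces, which hinges on the interplay of $\lambda\in\{0,1\}$ with the cardinality constraint and with the fact that the $\bm{G},\bm{H}$ bounds annihilate the inactive block of $\bm{y}$; the accompanying closure bookkeeping at $\lambda\in\{0,1\}$ is the other delicate point. Everything downstream is the same perspective-type projection already carried out for Proposition~\ref{prop:1dim_nonlinear_ext} and Corollary~\ref{cor:multidim_nonlinear_ext}.
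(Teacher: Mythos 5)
Your proposal is correct and follows essentially the same route as the paper's proof: decompose $\tilde{\myX}_1$ into the two convex slices $\tilde{\myX}_1^0$ (where $\lambda=0$, hence $\bm{z}\kf=\bm{0}$, $\bm{y}\jf=\bm{0}$) and $\tilde{\myX}_1^1$ (where $\lambda=1$, hence $\bm{z}\ks=\bm{0}$, $\bm{y}\js=\bm{0}$), write a convex-hull point as a combination with weight $\lambda=\mathbbm{1}_{|K_1|}^\top\bm{z}\kf$, and project out all branch copies except $\bm{p}=\bm{x}_1^0$, followed by the substitution $\tilde{\bm{p}}=(1-\lambda)\bm{p}$ for the conic form. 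The paper states this projection step without elaboration; your perspective identity and the boundary bookkeeping at $\lambda\in\{0,1\}$ are exactly the omitted details, correctly supplied.
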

\begin{proof}
The set $\Tilde{\myX}_1$ is the union of the two disjunctions $\Tilde{\myX}_1^0$ and $\Tilde{\myX}_1^1$ defined as
\begin{equation*}
    \begin{aligned}
     &\tilde{\myX}_1^0 = \big\{\left(\bm{x}_{1}^0, \bm{x}_2^0, \bm{y}\jf^0, \bm{y}\js^0 , \bm{z}\kf^0, \bm{z}\ks^0,  w^0\right): \bm{y}\jf^0 = 0, \ \bm{z}\kf^0 = 0, \\
     &\pushright{w^0 \geq \left(\bm{x}_2^0\right)^\top \bm{Q}_2 \bm{x}_2^0 + \left(\bm{y}\js^0\right)^\top \bm{R}\js \bm{y}\js^0 + \left(\bm{z}\ks^0\right)^\top \bm{S}\ks \bm{z}\ks^0, \ } \\
    &\pushright{ \bm{x}_2^0 = \bm{A}\bm{x}_{1}^0 +\bm{B}\js \bm{y}\js^0 + \bm{C}\ks \bm{z}\ks^0, \ \bm{G}\js \bm{z}\ks^0 \leq \bm{y}\js^0 \leq \bm{H}\js \bm{z}\ks^0, \ }\\
    &\pushright{\bm{\ell}_{1} \leq \bm{x}_{1}^0 \leq \bm{u}_{1}, \ \bm{\ell}_{2} \leq \bm{x}_2^0 \leq \bm{u}_{2}, \ \mathbbm{1}_{|K_2|}^\top \bm{z}\ks^0 \leq 1 \big\},}\\
     &\tilde{\myX}_1^1 = \big\{\left(\bm{x}_{1}^1, \bm{x}_2^1, \bm{y}\jf^1, \bm{y}\js^1 , \bm{z}\kf^1, \bm{z}\ks^1,  w^1\right): \bm{y}\js^1 = 0, \ \bm{z}\ks^1 = 0, \\
     &\pushright{w^1 \geq \left(\bm{x}_2^1\right)^\top \bm{Q}_2 \bm{x}_2^1 + \left(\bm{y}\jf^1\right)^\top \bm{R}\jf \bm{y}\jf^1 + \left(\bm{z}\kf^1\right)^\top \bm{S}\kf \bm{z}\kf^1, \ } \\
    &\pushright{ \bm{x}_2^1 = \bm{A}\bm{x}_{1}^1 +\bm{B}\jf \bm{y}\jf^1 + \bm{C}\kf \bm{z}\kf^1, \ \bm{G}\jf \bm{z}\kf^1 \leq \bm{y}\jf^1 \leq \bm{H}\jf \bm{z}\kf^1, \ }\\
    &\pushright{\bm{\ell}_{1} \leq \bm{x}_{1}^1 \leq \bm{u}_{1}, \ \bm{\ell}_{2} \leq \bm{x}_2^1 \leq \bm{u}_{2}, \ \mathbbm{1}_{|K_1|}^\top \bm{z}\kf^1 = 1 \big\}.}
    \end{aligned}
\end{equation*}
Then, for $\bm{\chi}=(\bm{x}_{1}, \bm{x}_2, \bm{y}\jf, \bm{y}\js, \bm{z}\kf, \bm{z}\ks) \in \conv\left(\tilde{\myX}_1\right)$ such that $\lambda = \mathbbm{1}_{|K_1|}^\top \bm{z}\kf \in (0,1)$, it holds $\bm{\chi} = (1-\lambda) \bm{\chi}^0 + \lambda \bm{\chi}^1$ for $\bm{\chi}^0 \in \tilde{\myX}_1^0$ and $\bm{\chi}^1 \in \tilde{\myX}_1^1$. Projecting out all auxiliary variables other than $\bm{p} = \bm{x}_{1}^0$, a convex hull representation of $\conv\left(\tilde{\myX}_1\right)$ in an extended space is obtained. 
\end{proof}
\noindent Note that $\lambda, \bm{\sigma}, \tilde{\bm{\sigma}}$ are used only for simplicity. 


To further project out the auxiliary variable $\bm{p}$ in the convex hull representation of $\Tilde{\myX}_1$, \eqref{multi_dz_nonlinear_ext_cut} is replaced with 
\begin{align*}
    w \geq &\bigg(\frac{1}{\lambda} - 1 \bigg) \tau(\bm{\chi}) + \bm{x}_2^\top \bm{Q}_2 \bm{x}_2  \\ 
    &\ + \frac{\bm{y}\jf^\top \bm{R}\jf \bm{y}\jf + \bm{z}\kf^\top \bm{S}\kf \bm{z}\kf}{\lambda} + \frac{\bm{y}\js^\top \bm{R}\js \bm{y}\js + \bm{z}\ks^\top \bm{S}\ks \bm{z}\ks}{1-\lambda} 
\end{align*}
where $\bm{\chi}=(\bm{x}_{1}, \bm{x}_2, \bm{y}\jf, \bm{y}\js, \bm{z}\kf, \bm{z}\ks)$ and
\begin{equation}
    \begin{aligned}
        \tau(\bm{\chi}) = \min_{\bm{p},\bm{\sigma}} \ & (\bm{x}_2 - \bm{\sigma} )^\top \bm{Q}_2 (\bm{x}_2 - \bm{\sigma} ) \\
        \text{ s.t. }  & \eqref{sigma_def_multi_dz}, \eqref{multi_dz_ext_bd1}, \eqref{multi_dz_ext_bd2}.
    \end{aligned} \label{multi_dz_subproblem}
\end{equation}
Then, replacing the bound constraints on \eqref{multi_dz_ext_bd1} with bound constraints on $\sigma$ using $\bm{\ell}_{\bm{A}} = \lowerh(
\bm{u}_{1},\bm{\ell}_{1})$, $\bm{u}_{A} = \upperh(\bm{u}_{1},\bm{\ell}_{1})$ similarly as in Section~\ref{sec:cut_general1}, \eqref{multi_dz_subproblem} is relaxed to a decomposable problem 
\begin{equation}
    \begin{aligned}
        \sum_{i=1}^{d_x} &\ \min_{\sigma_i \in \mathbb{R}} && q_i(x_{2,i} - \sigma_i)^2\\
        &\ \text{ s.t.} &&\ell (\bm{\chi})_i \leq \sigma_i \leq u(\bm{\chi})_i
    \end{aligned}
    \label{Form:multi relaxed sub prob}
\end{equation}
where $q_i := \diag(\bm{Q}_2)_i$ for $i \in [d_x]$, $\lambda = \mathbbm{1}_{|K_1|}^\top \bm{z}\kf$, and 
\begin{align*}
    \bm{\ell} (\bm{\chi}) &:= \max \bigg\{\bm{\ell}_{\bm{A}}+\frac{\bm{B}\js \bm{y}\js + \bm{C}\ks \bm{z}\ks}{1-\lambda},\ \bm{\ell}_{2},\\ 
    & \hspace{1.3cm}\frac{\bm{A}\bm{x}_{1} - \lambda \bm{u}_{\bm{A}}}{1-\lambda }+\frac{\bm{B}\js \bm{y}\js + \bm{C}\ks \bm{z}\ks}{1-\lambda }, \ \frac{\bm{x}_2 - \lambda \bm{u}_{2}}{1-\lambda} \bigg\},\\
    \bm{u}(\bm{\chi}) &:= \min\bigg\{\bm{u}_{\bm{A}}+\frac{\bm{B}\js \bm{y}\js + \bm{C}\ks \bm{z}\ks}{1-\lambda},\ \bm{u}_{2}, \\ 
    &\hspace{1.3cm}\frac{\bm{A}\bm{x}_{1} - \lambda \bm{\ell}_{\bm{A}}}{1-\lambda}+\frac{\bm{B}\js \bm{y}\js + \bm{C}\ks \bm{z}\ks}{1-\lambda},\  \frac{\bm{x}_2 - \lambda \bm{\ell}_{2}}{1-\lambda} \bigg\}.
\end{align*}
As in the $d_z=1$ case, we derive linear feasibility cuts from these bounds of the projection problem  \eqref{Form:multi relaxed sub prob}.

\begin{proposition}\label{Prop3}
For any feasible solution of \eqref{multi-dz 2-way}, the following eight \textit{$(K_1,K_2)$--feasibility cuts} hold: 
\begin{enumerate}[label=(\Alph*)]
    \item \label{itm:M1} $(1-\lambda) \bm{\ell}_{\bm{A}} + \bm{B}\js \bm{y}\js + \bm{C}\ks \bm{z}\ks \leq (1-\lambda) \bm{u}_{2}$
    \item \label{itm:M2} $(1-\lambda) \bm{\ell}_{2} \leq (1-\lambda) \bm{u}_{\bm{A}} + \bm{B}\js \bm{y}\js + \bm{C}\ks \bm{z}\ks$
    \item \label{itm:M3} $(1-\lambda) \bm{\ell}_{\bm{A}} +\bm{B}\js \bm{y}\js + \bm{C}\ks \bm{z}\ks \leq \bm{x}_2 - \lambda \bm{\ell}_{2}$
    \item \label{itm:M4} $\bm{x}_2 -\bm{B}\js \bm{y}\js -\bm{C}\ks \bm{z}\ks \leq \lambda \bm{u}_{2} + (1-\lambda) \bm{u}_{\bm{A}}$
    \item \label{itm:M5} $\bm{A}\bm{x}_{1} + \bm{B}\js \bm{y}\js + \bm{C}\ks \bm{z}\ks \geq \lambda \bm{\ell}_{\bm{A}} + (1-\lambda) \bm{\ell}_{2}$ 
    \item \label{itm:M6} $\bm{A}\bm{x}_{1} + \bm{B}\js \bm{y}\js + \bm{C}\ks \bm{z}\ks  \leq \lambda \bm{u}_{\bm{A}} + (1-\lambda) \bm{u}_{2}$
    \item \label{itm:M7} $\lambda \bm{u}_{2} \geq \bm{B}\jf \bm{y}\jf + \bm{C}\kf \bm{z}\kf + \lambda \bm{\ell}_{\bm{A}}$
    \item \label{itm:M8} $\bm{B}\jf \bm{y}\jf + \bm{C}\kf \bm{z}\kf + \lambda \bm{u}_{\bm{A}} \geq \lambda \bm{\ell}_{2}$
\end{enumerate}
Moreover, for any solution of (R\ref{multi-dz 2-way}) with \ref{itm:M1} - \ref{itm:M8}, corresponding \eqref{Form:multi relaxed sub prob} is feasible. (Proof in Appendix \ref{AppendixC}.)
\end{proposition}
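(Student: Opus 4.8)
The plan is to follow the one-dimensional template of Proposition~\ref{Prop1}, proving the two assertions in turn. For the validity of the eight cuts, the key observation is that a feasible point of \eqref{multi-dz 2-way} has $\lambda:=\mathbbm{1}_{|K_1|}^\top\bm{z}\kf\in\{0,1\}$, since $\bm{z}\kf$ is binary and $\mathbbm{1}_{|K_1|}^\top\bm{z}\kf+\mathbbm{1}_{|K_2|}^\top\bm{z}\ks\le1$ with $\bm{z}\ks\ge\bm{0}$; so the argument reduces to two cases. For the feasibility of the projection problem, I would first note that because $\bm{Q}_2$ is diagonal, \eqref{Form:multi relaxed sub prob} separates into scalar problems $\min_{\sigma_i}q_i(x_{2,i}-\sigma_i)^2$ over $[\ell(\bm{\chi})_i,u(\bm{\chi})_i]$, so feasibility is exactly the elementwise inequality $\bm{\ell}(\bm{\chi})\le\bm{u}(\bm{\chi})$, which I would then expand into the sixteen pairwise inequalities coming from the four-term $\max$ and $\min$.

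For the first assertion: if $\lambda=1$ then $\mathbbm{1}_{|K_2|}^\top\bm{z}\ks\le1-\lambda=0$ forces $\bm{z}\ks=\bm{0}$, and then $\bm{G}\js\bm{z}\ks\le\bm{y}\js\le\bm{H}\js\bm{z}\ks$ forces $\bm{y}\js=\bm{0}$; symmetrically $\lambda=0$ forces $\bm{z}\kf=\bm{0}$ and $\bm{y}\jf=\bm{0}$. In the case $\lambda=1$ I would substitute these zeros so that \ref{itm:M1}--\ref{itm:M2} collapse to $\bm{0}\le\bm{0}$, \ref{itm:M3}--\ref{itm:M6} reduce to the box bounds $\bm{\ell}_2\le\bm{x}_2\le\bm{u}_2$ and $\bm{\ell}_{\bm{A}}\le\bm{A}\bm{x}_{1}\le\bm{u}_{\bm{A}}$ (the latter valid for every $\bm{x}_{1}\in[\bm{\ell}_{1},\bm{u}_{1}]$ by the definition of $\upperh,\lowerh$, exactly as in Section~\ref{sec:cut_general1}), and, after replacing $\bm{B}\jf\bm{y}\jf+\bm{C}\kf\bm{z}\kf$ by $\bm{x}_2-\bm{A}\bm{x}_{1}$ using the dynamics equality, \ref{itm:M7}--\ref{itm:M8} become $\bm{x}_2\le\bm{u}_2+(\bm{A}\bm{x}_{1}-\bm{\ell}_{\bm{A}})$ and $\bm{x}_2\ge\bm{\ell}_2+(\bm{A}\bm{x}_{1}-\bm{u}_{\bm{A}})$, which follow from $\bm{\ell}_2\le\bm{x}_2\le\bm{u}_2$ and the sign of $\bm{A}\bm{x}_{1}-\bm{\ell}_{\bm{A}}$ and $\bm{A}\bm{x}_{1}-\bm{u}_{\bm{A}}$. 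The case $\lambda=0$ is symmetric: now \ref{itm:M7}--\ref{itm:M8} collapse to $\bm{0}\le\bm{0}$, and \ref{itm:M1}--\ref{itm:M6} are checked with the substitution $\bm{B}\js\bm{y}\js+\bm{C}\ks\bm{z}\ks=\bm{x}_2-\bm{A}\bm{x}_{1}$.

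For the second assertion, write $\bm{\ell}(\bm{\chi})=\max\{L_1,L_2,L_3,L_4\}$ and $\bm{u}(\bm{\chi})=\min\{U_1,U_2,U_3,U_4\}$ with $L_1=\bm{\ell}_{\bm{A}}+\tfrac{\bm{B}\js\bm{y}\js+\bm{C}\ks\bm{z}\ks}{1-\lambda}$, $L_2=\bm{\ell}_2$, $L_3=\tfrac{\bm{A}\bm{x}_{1}-\lambda\bm{u}_{\bm{A}}+\bm{B}\js\bm{y}\js+\bm{C}\ks\bm{z}\ks}{1-\lambda}$, $L_4=\tfrac{\bm{x}_2-\lambda\bm{u}_2}{1-\lambda}$, and each $U_j$ obtained from $L_j$ by interchanging the lower bounds $\bm{\ell}_{\bm{A}},\bm{\ell}_2$ with the upper bounds $\bm{u}_{\bm{A}},\bm{u}_2$. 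Since $1-\lambda>0$ (we assume $\lambda<1$, as the cut is applied only then), each inequality $L_i\le U_j$ clears to a linear one, and I would verify: $L_i\le U_i$ reduce to $\bm{\ell}_{\bm{A}}\le\bm{u}_{\bm{A}}$ and $\bm{\ell}_2\le\bm{u}_2$; $L_1\le U_3,\,L_3\le U_1,\,L_2\le U_4,\,L_4\le U_2$ reduce to $\bm{\ell}_{\bm{A}}\le\bm{A}\bm{x}_{1}\le\bm{u}_{\bm{A}}$ and $\bm{\ell}_2\le\bm{x}_2\le\bm{u}_2$, all implied by (R\ref{multi-dz 2-way}); $L_1\le U_2,\,L_2\le U_1,\,L_1\le U_4,\,L_4\le U_1,\,L_2\le U_3,\,L_3\le U_2$ are precisely \ref{itm:M1},\ref{itm:M2},\ref{itm:M3},\ref{itm:M4},\ref{itm:M5},\ref{itm:M6}; and the remaining $L_3\le U_4$ and $L_4\le U_3$ become \ref{itm:M8} and \ref{itm:M7} after substituting $\bm{A}\bm{x}_{1}+\bm{B}\js\bm{y}\js+\bm{C}\ks\bm{z}\ks=\bm{x}_2-\bm{B}\jf\bm{y}\jf-\bm{C}\kf\bm{z}\kf$ from the dynamics equality. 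Hence $\bm{\ell}(\bm{\chi})\le\bm{u}(\bm{\chi})$ and \eqref{Form:multi relaxed sub prob} is feasible.

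The proof is largely bookkeeping, carried out coordinate by coordinate since all of $\max$, $\min$, and $\le$ are elementwise, and no idea beyond the one-dimensional case of Proposition~\ref{Prop1} is needed. The one place that requires genuine care is the pair $L_3\le U_4$, $L_4\le U_3$: unlike the other fourteen inequalities these are not consequences of the box bounds alone, and identifying them with \ref{itm:M7}--\ref{itm:M8} relies on substituting the linear dynamics — which is exactly the structural feature that makes these feasibility cuts non-redundant. A secondary point is simply to keep track of inequality directions when multiplying through by $1-\lambda>0$ and when replacing $\bm{A}\bm{x}_{1}$ by the endpoints $\bm{\ell}_{\bm{A}}$ or $\bm{u}_{\bm{A}}$.
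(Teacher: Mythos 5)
Your proposal is correct and follows essentially the same route as the paper's Appendix~\ref{AppendixC} proof: both rest on $\lambda=\mathbbm{1}_{|K_1|}^\top\bm{z}\kf\in\{0,1\}$, the box bounds $\bm{\ell}_{\bm{A}}\le\bm{A}\bm{x}_1\le\bm{u}_{\bm{A}}$ and $\bm{\ell}_2\le\bm{x}_2\le\bm{u}_2$, and substitution of the linear dynamics, with your direct two-case verification being simply the contrapositive of the paper's argument by contradiction. If anything, your treatment is more complete: the paper writes out only \ref{itm:M1}--\ref{itm:M2} and dismisses the second assertion in one sentence, whereas your explicit enumeration of the sixteen pairwise inequalities $L_i\le U_j$ (and the identification of $L_3\le U_4$, $L_4\le U_3$ with \ref{itm:M8}, \ref{itm:M7} via the dynamics) is exactly the bookkeeping the paper leaves implicit.
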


\begin{corollary}\label{Prop4}
A quadratic cut valid for \eqref{multi-dz 2-way} can be generated at any solution $\bar{\bm{\chi}} = (\bar{x}_1, \bar{x}_2, \bar{y}\jf, \bar{y}\js, \bar{z}\kf, \bar{z}\ks)$ of (R\ref{multi-dz 2-way}) with \ref{itm:M1} - \ref{itm:M8} as in Table \ref{table:feasible_cut_multi}, where $\bar{\lambda}:= \mathbbm{1}_{|K_1|}^\top \bar{\bm{z}}\kf$, $\bar{\bm{\ell}}:= \bm{\ell} (\bar{\bm{\chi}})$ and $\bar{\bm{u}}:= \bm{u}(\bar{\bm{\chi}})$.
\end{corollary}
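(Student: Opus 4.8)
The plan is to mirror exactly the cut-generation argument used in the one-dimensional case (Corollary~\ref{Prop2_revised} together with Corollary~\ref{cor_grad_cut}), but applied to the decomposed projection problem \eqref{Form:multi relaxed sub prob} and with the bookkeeping of the $(K_1,K_2)$-splitting. First I would start from the extended-space convex hull representation of $\tilde{\myX}_1$ in Corollary~\ref{cor:conv_tilde_Z1}, replace \eqref{multi_dz_nonlinear_ext_cut} by the projected inequality involving $\tau(\bm{\chi})$, and then relax $\tau(\bm{\chi})$ to $\tau'(\bm{\chi})$, the optimal value of the decomposable problem \eqref{Form:multi relaxed sub prob}; Proposition~\ref{Prop3} guarantees that \eqref{Form:multi relaxed sub prob} is feasible whenever the $(K_1,K_2)$-feasibility cuts \ref{itm:M1}--\ref{itm:M8} hold, so the value $\tau'(\bm{\chi})$ is well-defined at $\bar{\bm{\chi}}$. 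Because $\bm{Q}_2$ is diagonal, each coordinate subproblem $\min_{\sigma_i}\, q_i(x_{2,i}-\sigma_i)^2$ subject to $\ell(\bm{\chi})_i \le \sigma_i \le u(\bm{\chi})_i$ has the closed-form minimizer obtained by projecting $\bar{x}_{2,i}$ onto the interval $[\bar{\bm{\ell}}_i, \bar{\bm{u}}_i]$: the optimal $\bar\sigma_i$ equals $\bar{x}_{2,i}$ if it lies in the interval, equals $\bar{\bm{\ell}}_i$ if $\bar{x}_{2,i} < \bar{\bm{\ell}}_i$, and equals $\bar{\bm{u}}_i$ if $\bar{x}_{2,i} > \bar{\bm{u}}_i$. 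Substituting this $\bar{\bm\sigma}$ and evaluating which of the four terms in the $\max$/$\min$ defining $\bm{\ell}(\bm{\chi})$, $\bm{u}(\bm{\chi})$ is active gives exactly the case split and the corresponding expressions for $\bar\sigma$ that should appear in Table~\ref{table:feasible_cut_multi}, in complete analogy with Table~\ref{table:feasible_cut_revised}.

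Next I would argue validity of the quadratic cut. Fix a coordinate $i$ and note that the function $\sigma_i \mapsto q_i(\bar{x}_{2,i}-\sigma_i)^2$ is convex, so its value at the projection point $\bar\sigma_i$ is a lower bound for the optimal value of the $i$-th coordinate problem along any feasible trajectory; more precisely, for the actual point $\bar{\bm{\chi}}$ we have $\tau'(\bar{\bm{\chi}}) = \sum_i q_i(\bar{x}_{2,i}-\bar\sigma_i)^2 = (\bar{x}_2 - \bar{\bm\sigma})^\top \bm{Q}_2 (\bar{x}_2 - \bar{\bm\sigma})$. Combining $\tau(\bm{\chi}) \ge \tau'(\bm{\chi})$ (relaxation), $\tau'(\bar{\bm{\chi}}) = (\bar{x}_2-\bar{\bm\sigma})^\top \bm{Q}_2(\bar{x}_2-\bar{\bm\sigma})$, and the projected convex hull inequality yields the quadratic cut
\[
w \ \ge\ \Bigl(\tfrac{1}{\lambda}-1\Bigr)(\bar{x}_2-\bar{\bm\sigma})^\top \bm{Q}_2 (\bar{x}_2-\bar{\bm\sigma}) + \bm{x}_2^\top \bm{Q}_2 \bm{x}_2 + \tfrac{\bm{y}\jf^\top \bm{R}\jf \bm{y}\jf + \bm{z}\kf^\top \bm{S}\kf \bm{z}\kf}{\lambda} + \tfrac{\bm{y}\js^\top \bm{R}\js \bm{y}\js + \bm{z}\ks^\top \bm{S}\ks \bm{z}\ks}{1-\lambda},
\]
which is exactly the form tabulated in Table~\ref{table:feasible_cut_multi} with the $\bar{\bm\sigma}$ value dictated by the active case. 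Validity for \eqref{multi-dz 2-way} then follows because every feasible point of \eqref{multi-dz 2-way} lies in $\tilde{\myX}_1 \subseteq \conv(\tilde{\myX}_1)$ and hence satisfies the extended-space inequality, while the $(K_1,K_2)$-feasibility cuts are valid there by Proposition~\ref{Prop3}. The gradient-cut version, a linear underestimator of this convex quadratic at $\bar{\bm{\chi}}$, is then immediate by the same reasoning as in Corollary~\ref{cor_grad_cut}, using convexity of the right-hand side in $(\bm{x}_2,\bm{y}\jf,\bm{y}\js,\bm{z}\kf,\bm{z}\ks,\lambda)$.

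The main obstacle I anticipate is purely organizational rather than conceptual: correctly enumerating the cases for the multidimensional, $(K_1,K_2)$-split bounds. In the scalar case there were four candidates inside each of the $\max$ and $\min$, two of which ($\ell_a$ vs.\ $\ell_2$ and $u_a$ vs.\ $u_2$) are handled by the starred/daggered side conditions; here the affine shift $\frac{\bm{B}\js \bm{y}\js + \bm{C}\ks \bm{z}\ks}{1-\lambda}$ appears in three of the four candidates, so I need to track carefully how it moves between the two sides of each inequality when I multiply through by $1-\lambda$, and make sure the resulting conditions in Table~\ref{table:feasible_cut_multi} reduce to those of Table~\ref{table:feasible_cut_revised} when $d_z = 1$, $K_1 = K$, $K_2 = \emptyset$. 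A secondary subtlety is that $(J_1, J_2)$ need not partition $J$, so the quadratic terms $\bm{y}\jf^\top \bm{R}\jf \bm{y}\jf$ and $\bm{y}\js^\top \bm{R}\js \bm{y}\js$ may double-count some control variables; I would note that this is harmless for validity (it only makes the cut potentially weaker, since $\bm{R} \succeq 0$ and the reformulation \eqref{multi-dz 2-way} already uses these split quadratic terms consistently), so no extra argument is needed beyond referencing the reformulation step. Everything else is a routine transcription of the one-dimensional proof given in Appendices~\ref{Appendix:nonlinear_proof} and the argument behind Corollary~\ref{cor_grad_cut}.
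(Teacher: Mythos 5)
Your proposal follows essentially the same route as the paper, which gives no separate proof of Corollary~\ref{Prop4} and presents it as the coordinatewise analog of Corollary~\ref{cor_grad_cut}: the convex hull representation of $\tilde{\myX}_1$ from Corollary~\ref{cor:conv_tilde_Z1}, projection of $\bm{p}$, relaxation to the decomposable problem \eqref{Form:multi relaxed sub prob}, Proposition~\ref{Prop3} for its feasibility, and per-coordinate projection of $\bar{x}_{2,i}$ onto $[\bar{\ell}_i,\bar{u}_i]$ to produce the case split of Table~\ref{table:feasible_cut_multi} --- exactly the chain you describe. One caution: your validity step (``combining $\tau(\bm{\chi})\ge\tau'(\bm{\chi})$, $\tau'(\bar{\bm{\chi}})=(\bar{x}_2-\bar{\bm{\sigma}})^\top\bm{Q}_2(\bar{x}_2-\bar{\bm{\sigma}})$, and the projected inequality'') only evaluates $\tau'$ at the single point $\bar{\bm{\chi}}$, which by itself does not show that the resulting quadratic cut --- in which $\lambda$, $\bm{x}_2$, $\bm{y}\jf$, $\bm{y}\js$, $\bm{z}\kf$, $\bm{z}\ks$ remain variables --- holds at every point of $\myX_1$; what certifies validity is that, under the tabulated side conditions (the analogs of the $\star$/$\dagger$ conditions of Table~\ref{table:feasible_cut_revised}), the selected closed-form expression for $\bar{\bm{\sigma}}$ is the active branch of the piecewise nonlinear cut and yields a globally valid single-branch underestimator, so this verification, which you defer as ``purely organizational'' case-tracking, is where the actual content of the corollary lies rather than a routine transcription.
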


\begin{table*}[ht]
\caption{Quadratic cuts for multi-dimensional binary variable $d_z\geq 2$.}
\centering
\begin{tabular}{c|c|c}
         \hline \hline
         $\bar{\bm{x}}_2$& $\bar{\bm{\sigma}}_i$ &Conditions\\
         
         \hline
         \multirow{3}{*}{$\bar{x}_{2,i} < \bar{\ell}_i$}& $\bar{\ell}_i = \ell_{\bm{A},i}+\frac{\left(\bm{B}\js \bar{\bm{y}}\js + \bm{C}\ks \bar{\bm{z}}\ks\right)_i}{1-\bar{\lambda}}$ & \makecell{\vspace{-2.5mm}\\ $(1-\bar{\lambda})\bar{x}_{2,i} < (1-\bar{\lambda})\ell_{\bm{A},i}+\left(\bm{B}\js \bar{\bm{y}}\js + \bm{C}\ks \bar{\bm{z}}\ks\right)_i$, \\[2pt] $(\bm{A} \bar{\bm{x}}_1)_i \leq \bar{\lambda} u_{\bm{A},i}+ (1-\bar{\lambda})\ell_{\bm{A},i}$\vspace{0.8mm}}\\
         \cline{2-3}
         &$\bar{\ell}_i = \frac{\left(\bm{A} \bar{\bm{x}}_1 - \bar{\lambda} \bm{u}_{\bm{A}} + \bm{B}\js \bar{\bm{y}}\js + \bm{C}\ks \bar{\bm{z}}\ks\right)_i}{1-\bar{\lambda}}$ & \makecell{\vspace{-2.5mm}\\ $\bar{\lambda} \bar{x}_{2,i} > \left(\bm{B}\jf \bar{\bm{y}}\jf + \bm{C}\kf \bar{\bm{z}}\kf\right)_i + \bar{\lambda} u_{\bm{A},i}$,\\[2pt] $(\bm{A} \bar{\bm{x}}_1)_i \geq \bar{\lambda} u_{\bm{A},i} + (1-\bar{\lambda})\ell_{\bm{A},i}$\vspace{0.8mm}}\\
         
         \hline
         $\bar{\ell}_i \leq \bar{x}_{2,i} \leq \bar{u}_i$& $\bar{x}_{2,i}$& \makecell{\vspace{-2.5mm}\\ $(1-\bar{\lambda})\ell_{\bm{A},i}+ \left(\bm{B}\js \bar{\bm{y}}\js + \bm{C}\ks \bar{\bm{z}}\ks\right)_i \leq (1-\bar{\lambda})\bar{x}_{2,i} \leq (1-\bar{\lambda})u_{\bm{A},i}+ \left(\bm{B}\js \bar{\bm{y}}\js + \bm{C}\ks \bar{\bm{z}}\ks\right)_i$,\\[2pt] 
         $\left(\bm{B}\jf \bar{\bm{y}}\jf + \bm{C}\kf \bar{\bm{z}}\kf\right)_i + \bar{\lambda} \ell_{\bm{A},i} \leq \bar{\lambda} \bar{x}_{2,i} \leq \left(\bm{B}\jf \bar{\bm{y}}\jf + \bm{C}\kf \bar{\bm{z}}\kf\right)_i+ \bar{\lambda} u_{\bm{A},i}$\vspace{0.8mm}}\\
         
         \hline
         \multirow{3}{*}{$\bar{x}_{2,i} > \bar{u}_i$}&$\bar{u}_i = u_{\bm{A},i}+\frac{\left(\bm{B}\js \bar{\bm{y}}\js + \bm{C}\ks \bar{\bm{z}}\ks\right)_i}{1-\bar{\lambda}}$& \makecell{\vspace{-2.5mm}\\ $(1-\bar{\lambda})u_{\bm{A},i}+ \left(\bm{B}\js \bar{\bm{y}}\js + \bm{C}\ks \bar{\bm{z}}\ks \right)_i < (1-\bar{\lambda})\bar{x}_{2.i}$, \\[2pt] $(\bm{A}\bar{\bm{x}}_1)_i \geq \bar{\lambda} \ell_{\bm{A},i} + (1-\bar{\lambda})u_{\bm{A},i}$\vspace{0.8mm}}\\
         \cline{2-3}
         &$\bar{u}_i = \frac{\left(\bm{A} \bar{\bm{x}}_1 - \bar{\lambda} \ell_{\bm{A}} + \bm{B}\js \bar{\bm{y}}\js + \bm{C}\ks \bar{\bm{z}}\ks\right)_i}{1-\bar{\lambda}}$&\makecell{\vspace{-2.5mm}\\ $\bar{\lambda} \bar{x}_{2,i} < \left(\bm{B}\jf \bar{\bm{y}}\jf + \bm{C}\kf \bar{\bm{z}}\kf\right)_i + \bar{\lambda} \ell_{\bm{A},i}$, \\[2pt]  $(\bm{A}\bar{\bm{x}}_1)_i \leq \bar{\lambda} \ell_{\bm{A},i} + (1-\bar{\lambda})u_{\bm{A},i}$\vspace{0.8mm}}\\
         \hline
         \hline
         \thead{Cut} & \multicolumn{2}{c}{\makecell{\vspace{-2.5mm}\\ $w \geq \left(\frac{1}{\lambda} - 1\right)\sum_{i \in [d_x]} q_i(\bar{x}_{2,i} - \bar{\sigma}_i)^2 + \bm{x}_2^\top \bm{Q}_2 \bm{x}_2 + \frac{\bm{y}\jf^\top \bm{R}\jf \bm{y}\jf + \bm{z}\kf^\top \bm{S}\kf \bm{z}\kf}{\lambda} + \frac{\bm{y}\js^\top \bm{R}\js \bm{y}\js + \bm{z}\ks^\top \bm{S}\ks \bm{z}\ks}{1-\lambda}$\vspace{0.8mm}}}\\[1pt] 
         \hline \hline
    \end{tabular}

\captionsetup{justification=centering}
\label{table:feasible_cut_multi}
\end{table*}

Note that the cut generation with two-way disjunction can be applied for any nonempty set $K_1 \subseteq K$.

\subsection{Multi-period HCP}\label{sec:cut_general4}

The cut-generation process discussed in Section \ref{sec:cut_simple} - \ref{sec:cut_general3} considers a single-period HCP. However,
the proposed cuts can be applied to the multi-period HCP by simply generating them utilizing the constraints for each period $t$, $t \in [n]$, independently: 
\begin{equation*}
    \begin{aligned}
    & \bm{x}_{t+1}^{\top} \bm{Q}_{t+1} \bm{x}_{t+1} + \bm{y}_{t}^{\top}\bm{R}_{t} \bm{y}_{t} + \bm{z}_t^\top \bm{S}_t \bm{z}_t  \leq w_t\\
    & \bm{x}_{t+1}= \bm{A}_t \bm{x}_t + \bm{B}_t \bm{y}_{t}+ \bm{C}_t \bm{z}_t + \bm{f}_t,\\
    & \bm{G}_{t} \bm{z}_t \leq \bm{y}_{t} \leq \bm{H}_{t} \bm{z}_t,\\
    & \bm{\ell}_{t} \leq \bm{x}_{t} \leq \bm{u}_{t}, \ \bm{\ell}_{t+1} \leq \bm{x}_{t+1} \leq \bm{u}_{t+1}, \\ 
    & \bm{z}_t \in \{0,1\}^{d_z}, \ \bm{x}_t,\bm{x}_{t+1} \in \mathbb{R}^{d_x}, \ \bm{y}_{t} \in \mathbb{R}^{d_y}, w_t \in \mathbb{R}.
    \end{aligned}
\end{equation*}
This approach generates cuts from $n$ single-period problems and ignores the dependencies between consecutive periods. Therefore, it may have a larger relaxation gap compared to approaches considering multiple periods concurrently. However, the convexification in the original space keeps the number of variables small and may result in shorter computational times compared to methods that account for the interaction between periods. Details are discussed in Section \ref{sec:experiment}.


\section{Computational experiments}\label{sec:experiment}
In this section, we present computational experiments conducted to test the effectiveness of the proposed With-Cuts (WC) model. The experiments were performed on a 3.6-GHz processor, 32GB memory Linux machine using Python 3.8 and Drake \citep{drake} with embedded Mosek 9.0 solver. Presolve and automatic cut generation were enabled, and the node and variable selections for the B$\And$B were set to the default of Mosek with a time limit of one hour per instance. Four models are tested:
\begin{enumerate}
    \item MIQP: Solving \eqref{ORIG_PROB} directly.
    \item WC-NL: The With-Cuts (WC) model incorporating nonlinear cuts as in \eqref{1dim_nonlinear_ext_cut}.
    \item WC-G: WC model with gradient cuts \eqref{feas_cut_grad_ver}. Gradient cuts are added if $\mu_t^{new} \geq (1+10^{-6}) \mu_t^{prev} + 10^{-6}.$
    \item SPP: The generalized SPP approach of \cite{marcucci2024shortest}.
\end{enumerate}
The With-Cuts model strengthens the HCP formulations one period at a time, whereas the state-of-the-art generalized SPP approach \citep{marcucci2024shortest} utilizes $(d_z +1)^2$-way disjunctions considering the action space of two consecutive periods. Consequently, the SPP formulation is expected to be stronger than ours at the expense of a larger number of variables in the model. Our computational experiments reveal the tradeoff between the two approaches.

The synthetic dataset for the experiments is generated with the following specifications:
\begin{itemize}
    \item Problem dimensions: $n = 50$, $d_x \in \{1,\ldots,5\}$, $d_y \in \{d_x,\ldots, d_x+4\}$, $d_z=1$.
    \item Fixed cost matrices: $\bm{Q} = 2\bm{I}^{d_x}$, $\bm{R} = 0.01 \bm{I}^{d_y}$, $s=1$
    \item Fixed linear system dynamics: $\bm{A} = 1.5\bm{I}^{d_x} + \bm{M}^{d_x \times d_x}_{[0,1]}$, $\bm{B} = \bm{M}^{d_x\times d_y}_{[0,1]}$, $\bm{C} = 0.5\cdot\mathbbm{1}^{d_x}$, $\bm{f} = 0$ 
    \item Fixed variable bounds: $\bm{g}, -\bm{h} = 2.3\cdot \mathbbm{1}^{d_y}$, $\bm{\ell} = 0.1\cdot \mathbbm{1}^{d_x}$, $\bm{u} = 10\cdot \mathbbm{1}^{d_x}$
\end{itemize}
Here, $\bm{M}^{d_1\times d_2}_{[\alpha, \beta]}$ denotes a real-valued $d_1 \times d_2$  matrix with elements uniformly sampled from $[\alpha,\beta]$, and the superscripts of $\mathbbm{1}$ and $\bm{I}$ indicate their dimension. For each ($d_x, d_y$), ten instances are generated, and the average results across these instances are reported.

\subsection{Experimental results} \label{sec:experiment_result}
In Table \ref{tab:Mosek_N50_dz=1_repeat10_4model}, we report the relaxation gap and computational time for each model. Colored cells are used to highlight the results. Yellow cells indicate the models with the smallest relaxation gap, and blue cells indicate the models with the shortest computational time. The performance is highly dependent on the dimension of the state variables: $d_x$. SPP is superior for instances with $d_x \leq 3$, and WC-G outperforms others for higher dimensions ($d_x \geq 4$). Dependence on the dimension of the control variables ($d_y$) does not exhibit a clear trend. 
\begin{table*}[ht]
\caption{Experimental Results on Synthetic Data}
\centering
\begin{tabular}{p{0.2cm}|p{0.2cm}|p{0.8cm}p{0.8cm}p{0.8cm}p{0.8cm}|p{0.8cm}p{0.8cm}p{0.8cm}p{0.8cm}}
\hline \hline
\multicolumn{2}{c|}{Dimensions}                & \multicolumn{4}{c|}{Relaxation Gap (\%)}                                                 & \multicolumn{4}{c}{Run Time (sec.)}                                                    \\ \hline
\multicolumn{1}{c|}{$d_x$}                 & \multicolumn{1}{c|}{$d_y$} & \multicolumn{1}{c}{MIQP} & \multicolumn{1}{c}{WC-NL} & \multicolumn{1}{c}{WC-G} & \multicolumn{1}{c|}{SPP} & \multicolumn{1}{c}{MIQP}   & \multicolumn{1}{c}{WC-NL}  & \multicolumn{1}{c}{WC-G}   & \multicolumn{1}{c}{SPP}    \\ \hline
\multicolumn{1}{c|}{\multirow{5}{*}{1}} & \multicolumn{1}{c|}{1}  & \multicolumn{1}{c}{60.5} & \multicolumn{1}{c}{45.4}  & \multicolumn{1}{c}{44.9} & \multicolumn{1}{c|}{\cellcolor[HTML]{FFFFC7}7.2} & \multicolumn{1}{c}{314.9}  & \multicolumn{1}{c}{1123.8} & \multicolumn{1}{c}{263.1}  & \multicolumn{1}{c}{\cellcolor[HTML]{DAE8FC}84.4}   \\ 
\multicolumn{1}{c|}{}                   & \multicolumn{1}{c|}{2}  & \multicolumn{1}{c}{71.9} & \multicolumn{1}{c}{57.7}  & \multicolumn{1}{c}{57.6} & \multicolumn{1}{c|}{\cellcolor[HTML]{FFFFC7}5.8} & \multicolumn{1}{c}{970.2}  & \multicolumn{1}{c}{1329.2} & \multicolumn{1}{c}{456.8}  & \multicolumn{1}{c}{\cellcolor[HTML]{DAE8FC}325.1}  \\ 
\multicolumn{1}{c|}{}                   & \multicolumn{1}{c|}{3}  & \multicolumn{1}{c}{73.9} & \multicolumn{1}{c}{60.0}  & \multicolumn{1}{c}{59.9} & \multicolumn{1}{c|}{\cellcolor[HTML]{FFFFC7}5.4} & \multicolumn{1}{c}{1922.5} & \multicolumn{1}{c}{918.5}  & \multicolumn{1}{c}{728.5}  & \multicolumn{1}{c}{\cellcolor[HTML]{DAE8FC}365.2}  \\ 
\multicolumn{1}{c|}{}                   & \multicolumn{1}{c|}{4}  & \multicolumn{1}{c}{74.4} & \multicolumn{1}{c}{63.1}  & \multicolumn{1}{c}{63.1} & \multicolumn{1}{c|}{\cellcolor[HTML]{FFFFC7}5.6} & \multicolumn{1}{c}{2035.2} & \multicolumn{1}{c}{839.8}  & \multicolumn{1}{c}{644.7}  & \multicolumn{1}{c}{\cellcolor[HTML]{DAE8FC}24.0}   \\ 
\multicolumn{1}{c|}{}                   & \multicolumn{1}{c|}{5}  & \multicolumn{1}{c}{75.9} & \multicolumn{1}{c}{64.9}  & \multicolumn{1}{c}{64.9} & \multicolumn{1}{c|}{\cellcolor[HTML]{FFFFC7}4.7} & \multicolumn{1}{c}{2768.6} & \multicolumn{1}{c}{1322.8} & \multicolumn{1}{c}{1399.0} & \multicolumn{1}{c}{\cellcolor[HTML]{DAE8FC}393.9}  \\ \hline
\multicolumn{1}{c|}{\multirow{5}{*}{2}} & \multicolumn{1}{c|}{2}  & \multicolumn{1}{c}{46.3} & \multicolumn{1}{c}{21.9}  & \multicolumn{1}{c}{21.8} & \multicolumn{1}{c|}{\cellcolor[HTML]{FFFFC7}0.3} & \multicolumn{1}{c}{1087.5} & \multicolumn{1}{c}{313.6}  & \multicolumn{1}{c}{8.0}    & \multicolumn{1}{c}{\cellcolor[HTML]{DAE8FC}2.1}    \\ 
\multicolumn{1}{c|}{}                   & \multicolumn{1}{c|}{3}  & \multicolumn{1}{c}{50.9} & \multicolumn{1}{c}{33.5}  & \multicolumn{1}{c}{33.3} & \multicolumn{1}{c|}{\cellcolor[HTML]{FFFFC7}0.2} & \multicolumn{1}{c}{1969.3} & \multicolumn{1}{c}{320.6}  & \multicolumn{1}{c}{330.9}  & \multicolumn{1}{c}{\cellcolor[HTML]{DAE8FC}2.4}    \\ 
\multicolumn{1}{c|}{}                   & \multicolumn{1}{c|}{4}  & \multicolumn{1}{c}{50.3} & \multicolumn{1}{c}{34.0}  & \multicolumn{1}{c}{34.0} & \multicolumn{1}{c|}{\cellcolor[HTML]{FFFFC7}0.1} & \multicolumn{1}{c}{1848.5} & \multicolumn{1}{c}{50.7}   & \multicolumn{1}{c}{135.1}  & \multicolumn{1}{c}{\cellcolor[HTML]{DAE8FC}1.1}    \\ 
\multicolumn{1}{c|}{}                   & \multicolumn{1}{c|}{5}  & \multicolumn{1}{c}{52.3} & \multicolumn{1}{c}{37.3}  & \multicolumn{1}{c}{37.3} & \multicolumn{1}{c|}{\cellcolor[HTML]{FFFFC7}0.0} & \multicolumn{1}{c}{2171.4} & \multicolumn{1}{c}{241.4}  & \multicolumn{1}{c}{191.0}  & \multicolumn{1}{c}{\cellcolor[HTML]{DAE8FC}0.9}    \\ 
\multicolumn{1}{c|}{}                   & \multicolumn{1}{c|}{6}  & \multicolumn{1}{c}{52.2} & \multicolumn{1}{c}{39.5}  & \multicolumn{1}{c}{39.5} & \multicolumn{1}{c|}{\cellcolor[HTML]{FFFFC7}0.0} & \multicolumn{1}{c}{1974.5} & \multicolumn{1}{c}{176.8}  & \multicolumn{1}{c}{390.9}  & \multicolumn{1}{c}{\cellcolor[HTML]{DAE8FC}0.7}    \\ \hline
\multicolumn{1}{c|}{\multirow{5}{*}{3}} & \multicolumn{1}{c|}{3}  & \multicolumn{1}{c}{24.9} & \multicolumn{1}{c}{9.8}   & \multicolumn{1}{c}{9.7}  & \multicolumn{1}{c|}{\cellcolor[HTML]{FFFFC7}0.5} & \multicolumn{1}{c}{2061.4} & \multicolumn{1}{c}{656.4}  & \multicolumn{1}{c}{37.1}   & \multicolumn{1}{c}{\cellcolor[HTML]{DAE8FC}7.4}    \\ 
\multicolumn{1}{c|}{}                   & \multicolumn{1}{c|}{4}  & \multicolumn{1}{c}{26.6} & \multicolumn{1}{c}{10.7}  & \multicolumn{1}{c}{10.7} & \multicolumn{1}{c|}{\cellcolor[HTML]{FFFFC7}0.6} & \multicolumn{1}{c}{3029.6} & \multicolumn{1}{c}{505.4}  & \multicolumn{1}{c}{662.3}  & \multicolumn{1}{c}{\cellcolor[HTML]{DAE8FC}11.8}   \\ 
\multicolumn{1}{c|}{}                   & \multicolumn{1}{c|}{5}  & \multicolumn{1}{c}{29.5} & \multicolumn{1}{c}{15.7}  & \multicolumn{1}{c}{15.7} & \multicolumn{1}{c|}{\cellcolor[HTML]{FFFFC7}0.3} & \multicolumn{1}{c}{2796.5} & \multicolumn{1}{c}{439.1}  & \multicolumn{1}{c}{627.3}  & \multicolumn{1}{c}{\cellcolor[HTML]{DAE8FC}4.3}    \\ 
\multicolumn{1}{c|}{}                   & \multicolumn{1}{c|}{6}  & \multicolumn{1}{c}{25.1} & \multicolumn{1}{c}{12.9}  & \multicolumn{1}{c}{12.9} & \multicolumn{1}{c|}{\cellcolor[HTML]{FFFFC7}0.6} & \multicolumn{1}{c}{2893.3} & \multicolumn{1}{c}{110.6}  & \multicolumn{1}{c}{439.9}  & \multicolumn{1}{c}{\cellcolor[HTML]{DAE8FC}68.6}   \\ 
\multicolumn{1}{c|}{}                   & \multicolumn{1}{c|}{7}  & \multicolumn{1}{c}{29.1} & \multicolumn{1}{c}{16.1}  & \multicolumn{1}{c}{16.1} & \multicolumn{1}{c|}{\cellcolor[HTML]{FFFFC7}0.1} & \multicolumn{1}{c}{3600.1} & \multicolumn{1}{c}{332.0}  & \multicolumn{1}{c}{876.6}  & \multicolumn{1}{c}{\cellcolor[HTML]{DAE8FC}2.4}    \\ \hline
\multicolumn{1}{c|}{\multirow{5}{*}{4}} & \multicolumn{1}{c|}{4}  & \multicolumn{1}{c}{7.6}  & \multicolumn{1}{c}{\cellcolor[HTML]{FFFFC7}0.8}   & \multicolumn{1}{c}{\cellcolor[HTML]{FFFFC7}0.8}  & \multicolumn{1}{c|}{4.5} & \multicolumn{1}{c}{3180.5} & \multicolumn{1}{c}{\cellcolor[HTML]{DAE8FC}631.4}  & \multicolumn{1}{c}{690.0}  & \multicolumn{1}{c}{2570.6} \\ 
\multicolumn{1}{c|}{}                   & \multicolumn{1}{c|}{5}  & \multicolumn{1}{c}{7.2}  & \multicolumn{1}{c}{\cellcolor[HTML]{FFFFC7}1.8}   & \multicolumn{1}{c}{\cellcolor[HTML]{FFFFC7}1.8}  & \multicolumn{1}{c|}{4.4} & \multicolumn{1}{c}{2669.9} & \multicolumn{1}{c}{463.5}  & \multicolumn{1}{c}{\cellcolor[HTML]{DAE8FC}208.9}  & \multicolumn{1}{c}{2408.4} \\ 
\multicolumn{1}{c|}{}                   & \multicolumn{1}{c|}{6}  & \multicolumn{1}{c}{6.8}  & \multicolumn{1}{c}{\cellcolor[HTML]{FFFFC7}1.6}   & \multicolumn{1}{c}{\cellcolor[HTML]{FFFFC7}1.6}  & \multicolumn{1}{c|}{4.7} & \multicolumn{1}{c}{2894.6} & \multicolumn{1}{c}{457.6}  & \multicolumn{1}{c}{\cellcolor[HTML]{DAE8FC}212.8}  & \multicolumn{1}{c}{3267.2} \\ 
\multicolumn{1}{c|}{}                   & \multicolumn{1}{c|}{7}  & \multicolumn{1}{c}{6.5}  & \multicolumn{1}{c}{\cellcolor[HTML]{FFFFC7}1.3}   & \multicolumn{1}{c}{\cellcolor[HTML]{FFFFC7}1.3}  & \multicolumn{1}{c|}{4.7} & \multicolumn{1}{c}{3240.4} & \multicolumn{1}{c}{\cellcolor[HTML]{DAE8FC}769.4}  & \multicolumn{1}{c}{843.5}  & \multicolumn{1}{c}{2870.0} \\ 
\multicolumn{1}{c|}{}                   & \multicolumn{1}{c|}{8}  & \multicolumn{1}{c}{5.2}  & \multicolumn{1}{c}{\cellcolor[HTML]{FFFFC7}1.3}   & \multicolumn{1}{c}{\cellcolor[HTML]{FFFFC7}1.3}  & \multicolumn{1}{c|}{5.2} & \multicolumn{1}{c}{2568.3} & \multicolumn{1}{c}{\cellcolor[HTML]{DAE8FC}656.1}  & \multicolumn{1}{c}{673.4}  & \multicolumn{1}{c}{3238.1} \\ \hline
\multicolumn{1}{c|}{\multirow{5}{*}{5}} & \multicolumn{1}{c|}{5}  & \multicolumn{1}{c}{3.3}  & \multicolumn{1}{c}{\cellcolor[HTML]{FFFFC7}0.4}   & \multicolumn{1}{c}{\cellcolor[HTML]{FFFFC7}0.4}  & \multicolumn{1}{c|}{3.5} & \multicolumn{1}{c}{1086.1} & \multicolumn{1}{c}{737.1}  & \multicolumn{1}{c}{\cellcolor[HTML]{DAE8FC}386.8}  & \multicolumn{1}{c}{1855.1} \\ 
\multicolumn{1}{c|}{}                   &\multicolumn{1}{c|}{6}  & \multicolumn{1}{c}{1.2}  & \multicolumn{1}{c}{\cellcolor[HTML]{FFFFC7}0.0}   & \multicolumn{1}{c}{\cellcolor[HTML]{FFFFC7}0.0}  & \multicolumn{1}{c|}{2.4} & \multicolumn{1}{c}{226.9}  & \multicolumn{1}{c}{21.6}   & \multicolumn{1}{c}{\cellcolor[HTML]{DAE8FC}0.2}    & \multicolumn{1}{c}{758.4}  \\ 
\multicolumn{1}{c|}{}                   & \multicolumn{1}{c|}{7}  & \multicolumn{1}{c}{1.3}  & \multicolumn{1}{c}{\cellcolor[HTML]{FFFFC7}0.1}   & \multicolumn{1}{c}{\cellcolor[HTML]{FFFFC7}0.1}  & \multicolumn{1}{c|}{2.6} & \multicolumn{1}{c}{366.8}  & \multicolumn{1}{c}{\cellcolor[HTML]{DAE8FC}29.1}   & \multicolumn{1}{c}{51.6}   & \multicolumn{1}{c}{852.8}  \\ 
\multicolumn{1}{c|}{}                   & \multicolumn{1}{c|}{8}  & \multicolumn{1}{c}{0.6}  & \multicolumn{1}{c}{\cellcolor[HTML]{FFFFC7}0.0}   & \multicolumn{1}{c}{\cellcolor[HTML]{FFFFC7}0.0}  & \multicolumn{1}{c|}{2.6} & \multicolumn{1}{c}{4.8}    & \multicolumn{1}{c}{20.4}   & \multicolumn{1}{c}{\cellcolor[HTML]{DAE8FC}0.3}    & \multicolumn{1}{c}{492.1}  \\ 
\multicolumn{1}{c|}{}                   & \multicolumn{1}{c|}{9}  & \multicolumn{1}{c}{0.5}  & \multicolumn{1}{c}{\cellcolor[HTML]{FFFFC7}0.0}   & \multicolumn{1}{c}{\cellcolor[HTML]{FFFFC7}0.0}  & \multicolumn{1}{c|}{2.9} & \multicolumn{1}{c}{82.7}   & \multicolumn{1}{c}{20.8}   & \multicolumn{1}{c}{\cellcolor[HTML]{DAE8FC}0.4}    & \multicolumn{1}{c}{1458.4} \\ \hline
\multicolumn{2}{c|}{Average}                 & \multicolumn{1}{c}{\textbf{31.4}} & \multicolumn{1}{c}{\textbf{21.2}}  & \multicolumn{1}{c}{\textbf{21.1}} & \multicolumn{1}{c|}{\cellcolor[HTML]{FFFFC7}\textbf{2.8}} & \multicolumn{1}{c}{\textbf{1910.6}} & \multicolumn{1}{c}{\textbf{499.5}}  & \multicolumn{1}{c}{\cellcolor[HTML]{DAE8FC}\textbf{410.4}}  & \multicolumn{1}{c}{\textbf{842.6}}  \\ \hline \hline
\end{tabular}

\label{tab:Mosek_N50_dz=1_repeat10_4model}
\end{table*}

Figures \ref{fig:rgap_boxplot} and \ref{fig:stime_boxplot} present boxplots of the relaxation gap and the computational time as a function of $d_x$.
\begin{figure}[h!]
    \centering
    \resizebox{0.9\columnwidth}{!}{\includegraphics[width=\textwidth]{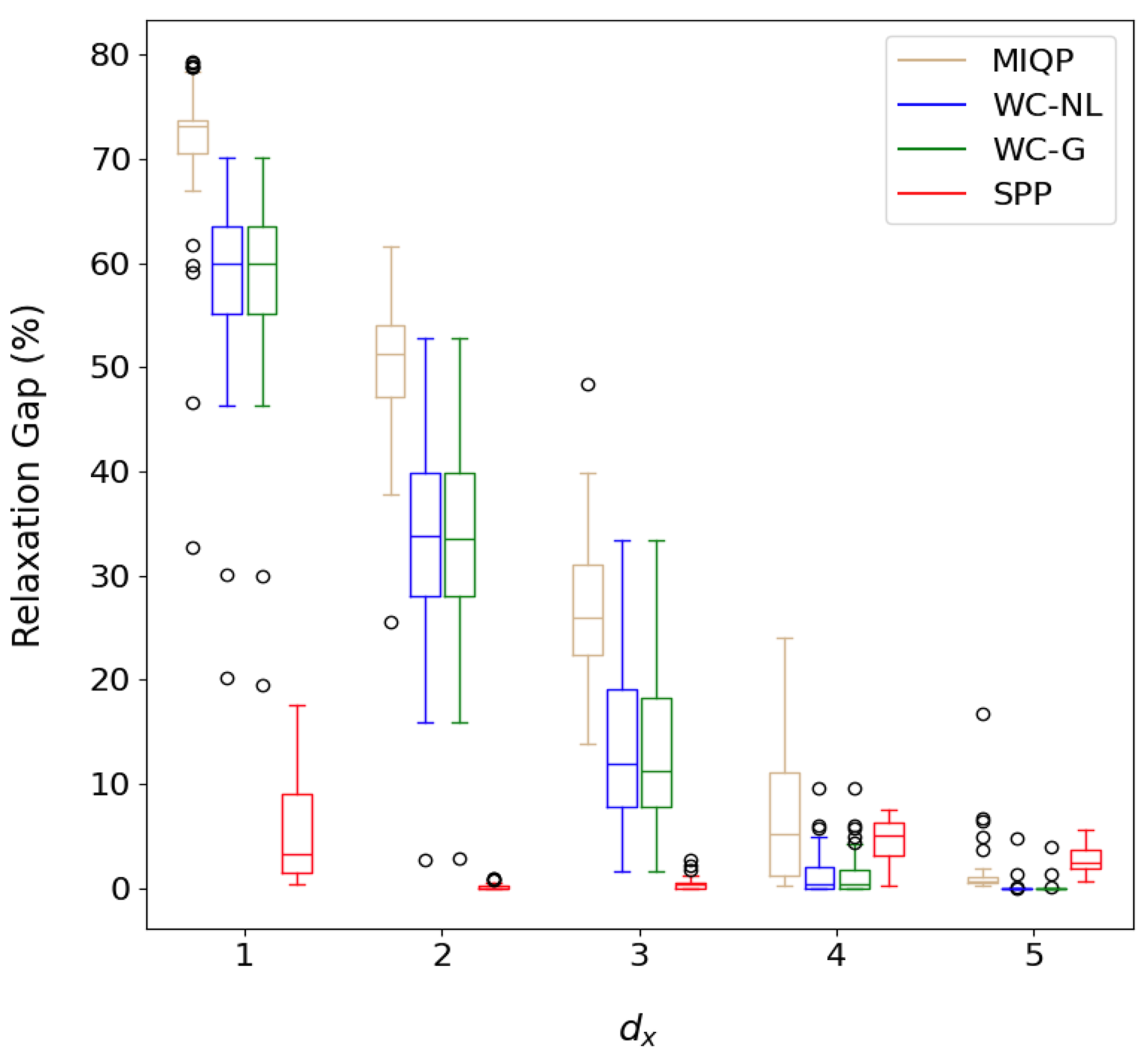}} 
    \vspace{-1mm}
    \caption{Relaxation gap as a function of $d_x$.}
    \label{fig:rgap_boxplot}
    \vspace{3mm}
    \resizebox{0.9\columnwidth}{!}{\includegraphics[width=\textwidth]{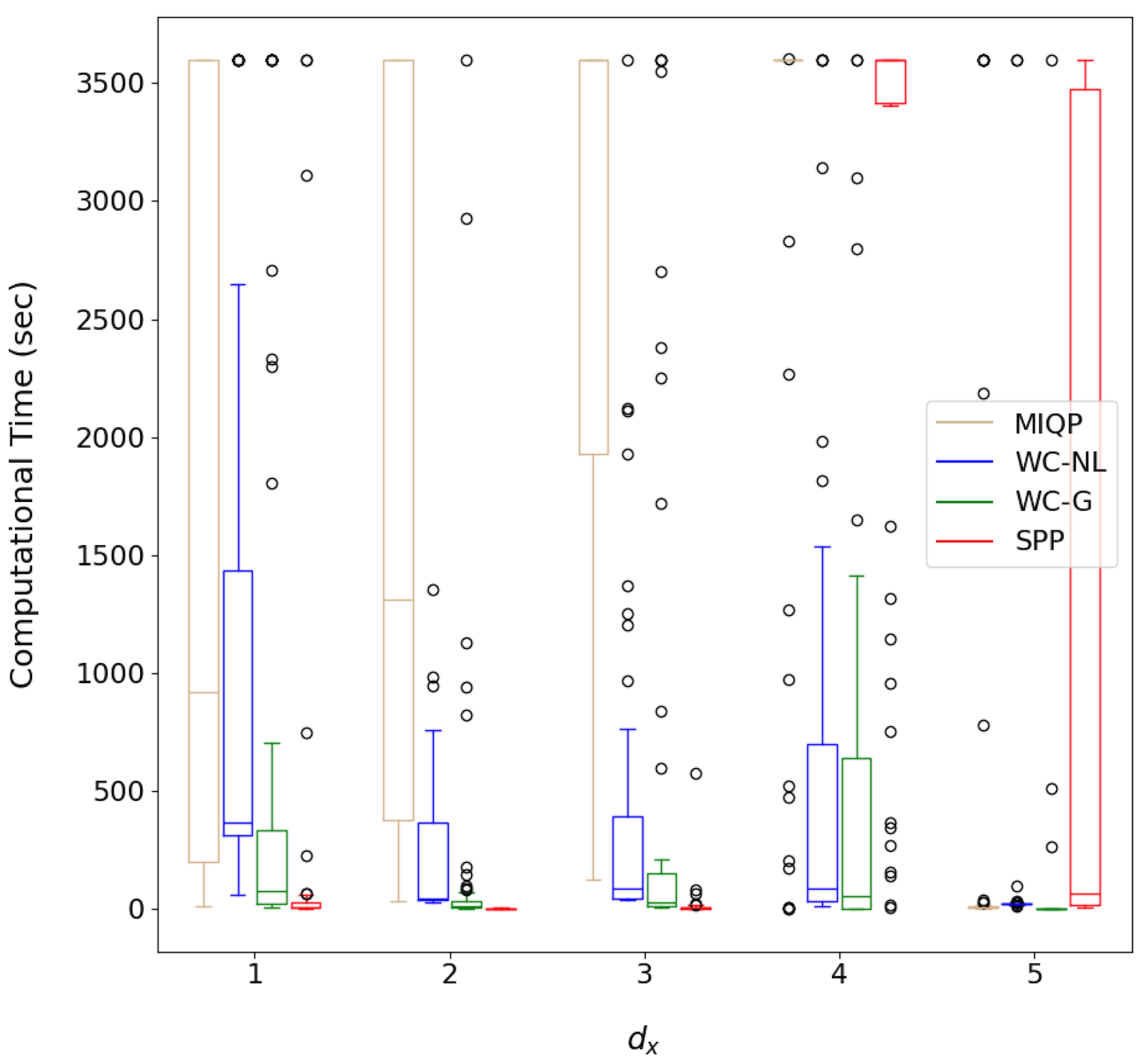}} 
    \vspace{-1mm}
    \caption{Computational time as a function of $d_x$.}
    \label{fig:stime_boxplot}
\end{figure}
MIQP exhibits consistently large relaxation gaps, particularly for smaller $d_x$ ($d_x \le 3$). Both WC models achieve significantly smaller relaxation gaps compared to MIQP, demonstrating the effectiveness of the proposed cut-generation process. For $d_x \geq 4$, WC models achieve very small relaxation gaps, while gaps remain large when $d_x \leq 3$. The SPP model consistently achieves a small relaxation gap for all $d_x$, as expected.

MIQP is computationally expensive in most cases. For $d_x\leq 3$, the SPP model is solved much faster than other models. Conversely, for $d_x \geq 4$, the WC-G has the shortest computational time in most cases. In particular, the computational times of MIQP and SPP models are very large compared to the WC models when $d_x =4$. Even when the relaxation gap is small, SPP takes considerable time to solve for $d_x \geq 4$ due to the large number of binary variables. Neither of the two versions of the WC models outperforms the other consistently, but WC-G with gradient cuts has a shorter computational time on average in most cases.


We further examine the performance of WC-G and SPP models as a function of $n$ for $d_x =2$ and $d_x=4$ with a time limit of one hour. The results are summarized in Figures \ref{fig:n_scale_dx2_boxplot} and \ref{fig:n_scale_dx4_boxplot}. For $d_x =2$, the relaxation gap and computational time for SPP remain relatively stable as $n$ grows. In contrast, the relaxation gap of WC-G increases as $n$ increases. The computational time of WC-G remains relatively stable for $n \leq 30$, but it fails to solve a few instances within one hour for $n \geq 40$. Conversely, when $d_x=4$, the trend is reversed. The relaxation gap and the computational time for WC-G remain small even for large $n$, while those of SPP increase rapidly as $n$ grows, resulting in many unsolved instances within one hour. This outcome underscores the significant impact of $d_x$ on the performance of the models.

\begin{figure*}[h!]
\centering
\subfloat[Relaxation gap\label{fig:rgap_dx2_boxplot}]{%
       \includegraphics[width=0.44\linewidth]{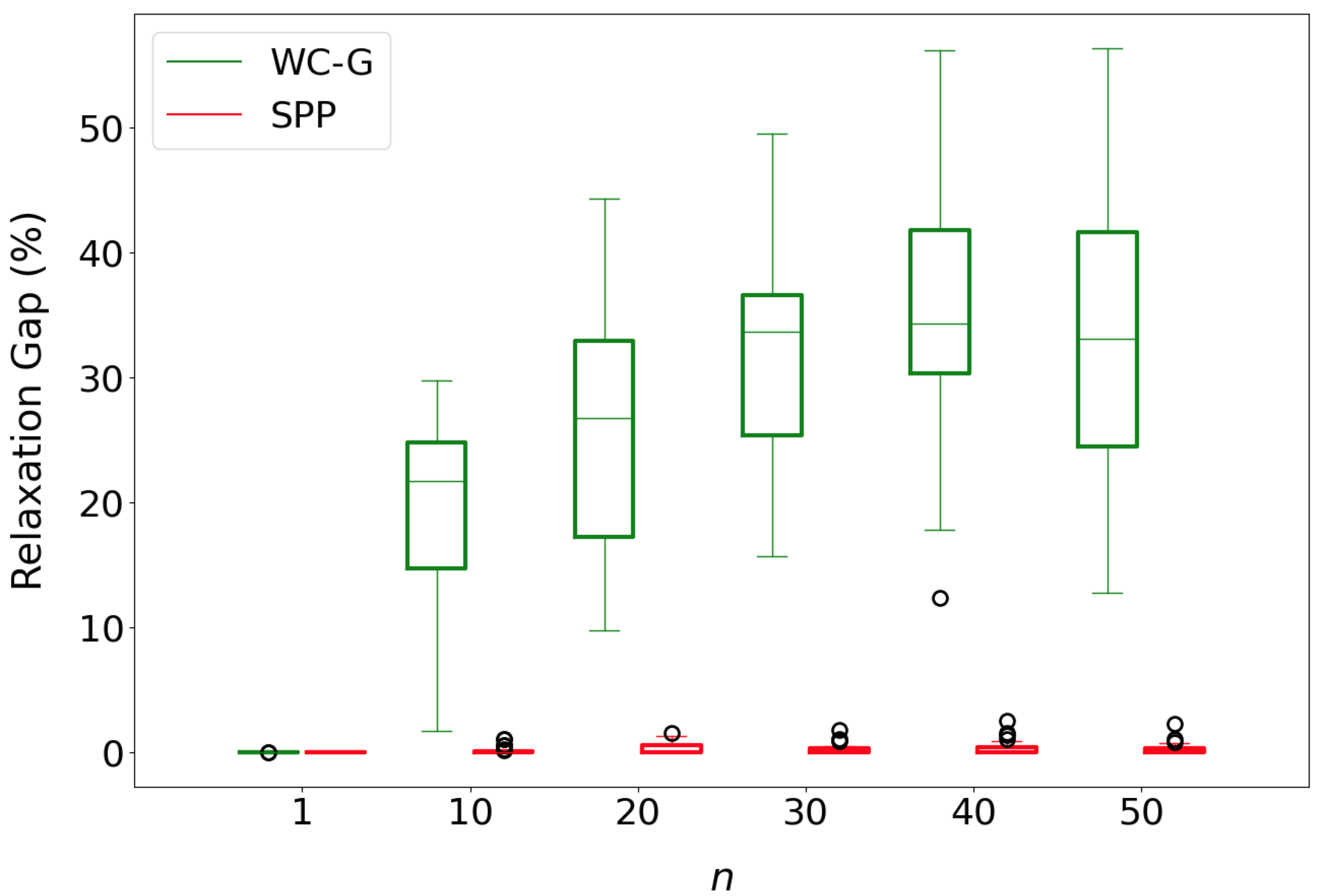}}
    \hspace{7mm}
\subfloat[Computational time\label{fig:stime_dx2_boxplot}]{%
       \includegraphics[width=0.453\linewidth]{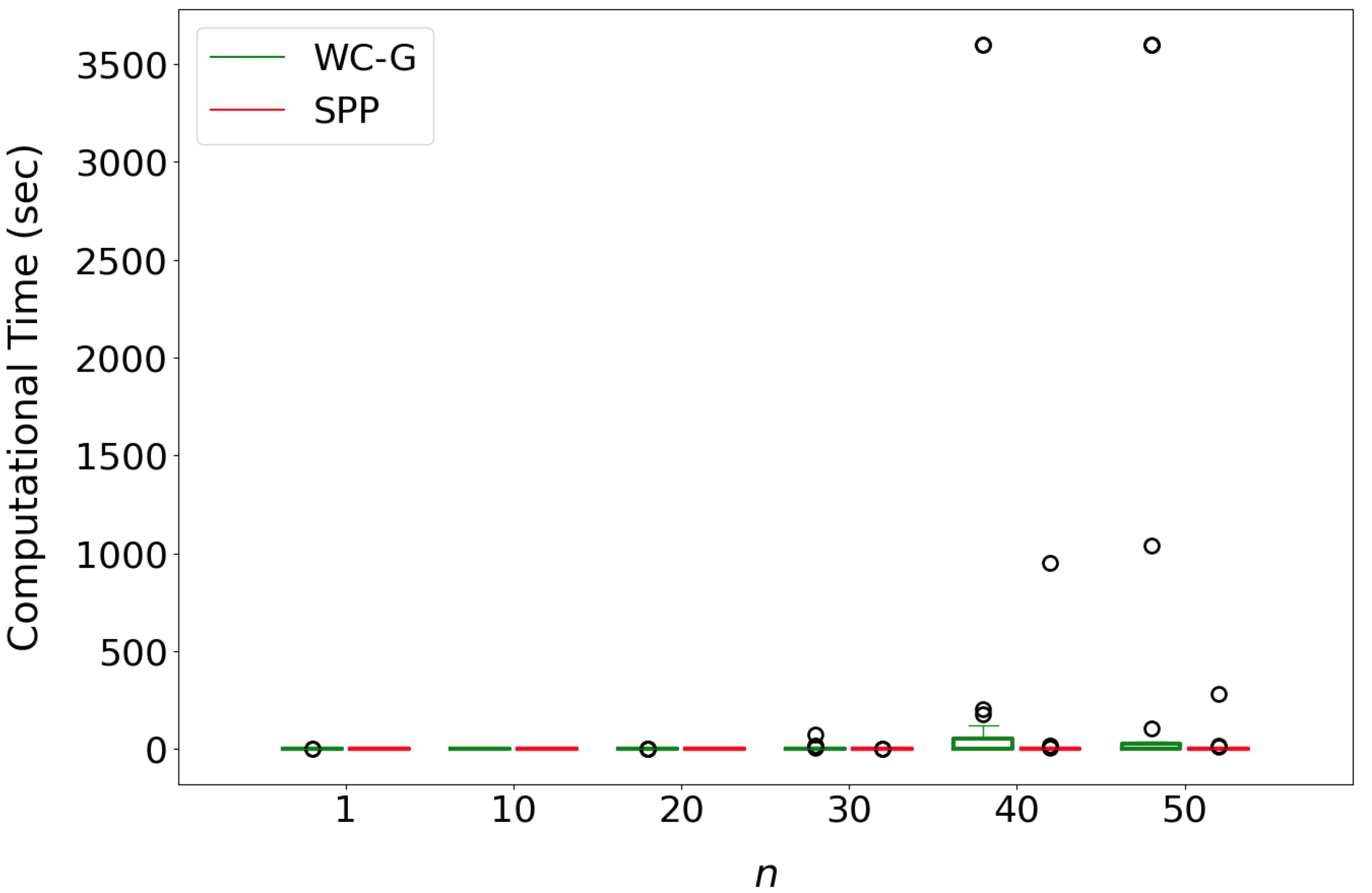}}
       \vspace{-0.5mm}
\caption{Performance of WC-G and SPP as a function of $n$ ($d_x=2$).}
\label{fig:n_scale_dx2_boxplot}
\vspace{2mm}
\subfloat[Relaxation gap\label{fig:rgap_dx4_boxplot}]{%
       \includegraphics[width=0.44\linewidth]{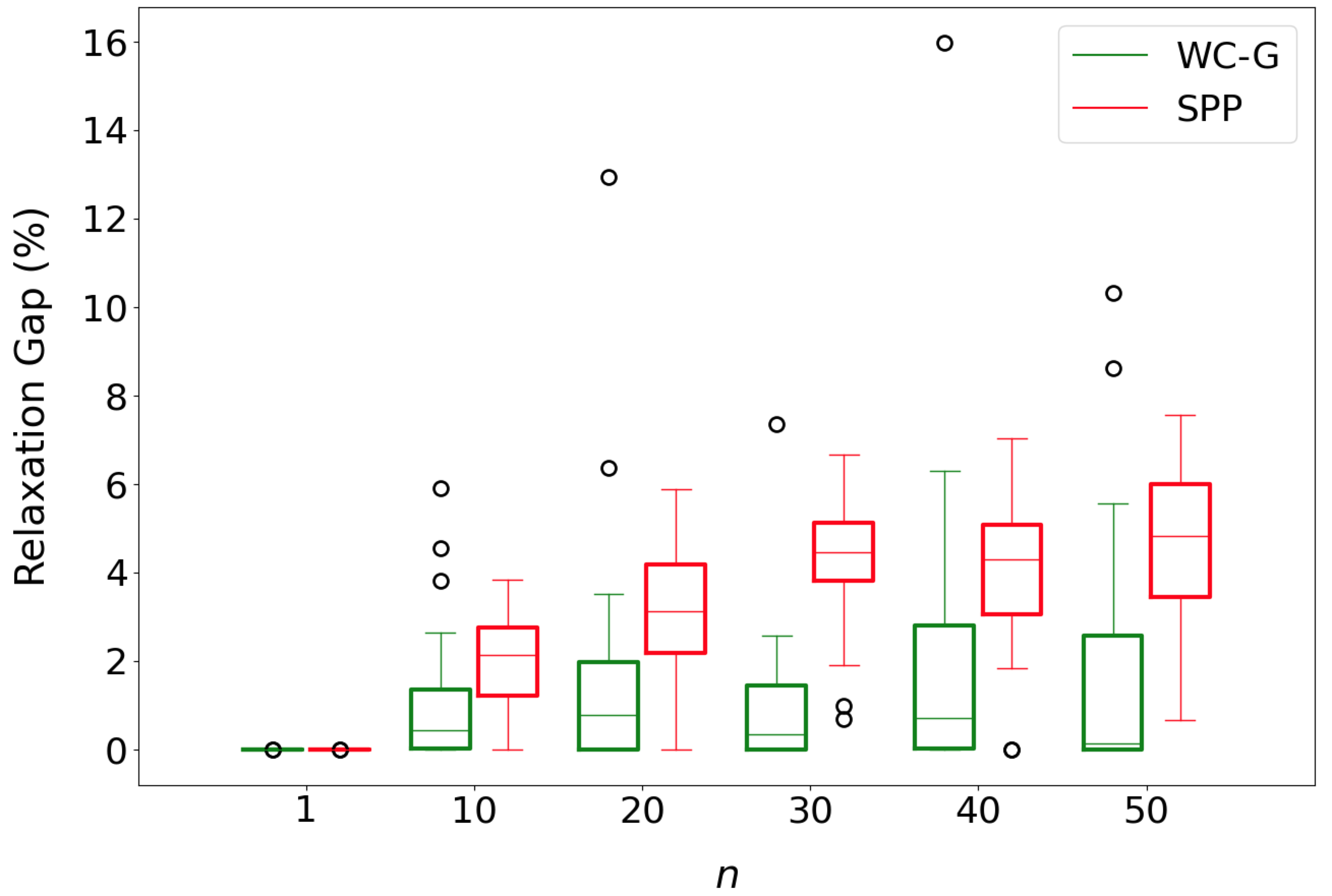}}
    \hspace{7mm}
\subfloat[Computational time\label{fig:stime_dx4_boxplot}]{%
       \includegraphics[width=0.453\linewidth]{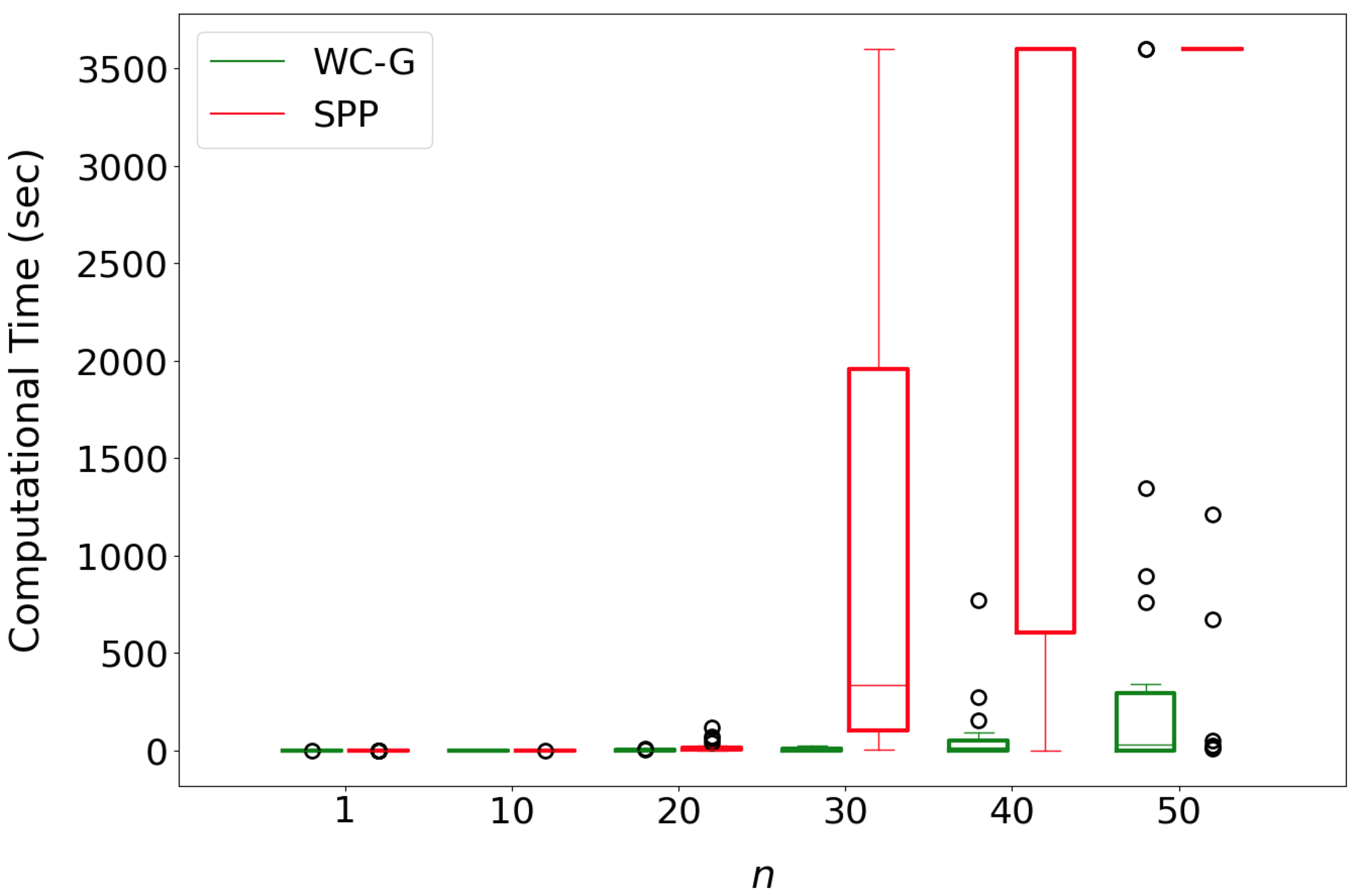}}
       \vspace{-0.5mm}
\caption{Performance of WC-G and SPP as a function of $n$ ($d_x=4$).}
\label{fig:n_scale_dx4_boxplot}
\end{figure*}

\subsection{Discussion: WC-G and SPP models} \label{sec:experiment_disc}

In this section, we discuss the factors affecting the performance of the two models, WC-G and SPP. The WC-G model applies the disjunction method independently to each period. For instance, when $d_z=1$, the feasible region for each period is partitioned into two sets, with cuts generated based on this partitioning. This localized approach enables the WC-G model to maintain simplicity by operating entirely in the original variable space, avoiding the introduction of additional binary variables, and minimizing computational overhead.
On the other hand, the SPP model can be interpreted as partitioning the feasible region into disjunctions, considering two consecutive periods simultaneously.  While this augmentation strengthens the relaxation further, it increases the number of binary variables and may lead to longer computational times, especially for large instances.
Table \ref{tab:WC_SPP_dimension} provides the dimensions of the two models. The WC-G model is formulated in the original variable space, whereas the SPP model is defined in an extended space.

Furthermore, the feasibility and gradient cuts generated in the WC-G model are all linear, whereas the SPP model employs nonlinear perspective functions directly, making the formulation inherently stronger but more challenging to solve.  These factors constitute the primary determinants of the models' performance.


\begin{table}[ht]
\caption{Variable size for models WC-G and SPP.}
\label{tab:WC_SPP_dimension}
\centering
\begin{tabular}{c|c|c}
     \hline \hline
     Model& WC-G & SPP\\ \hline 
     State $x$ & $d_x (n+1)$ & $d_x ((d_z+1)n+2)$ \\ \hline
     Control $y$ & $d_y n$ & $d_y ((d_z+1)n+2)$ \\ \hline
     Indicator $z$  & $d_z n$ & $(d_z+1)^2 (n-1) + 2(d_z+1)$
     \\ \hline \hline
\end{tabular}

\end{table}

\ignore{

\begin{figure}[h!]
    \centering
    \resizebox{\columnwidth}{!}{\includegraphics[width=\textwidth]{arXiv/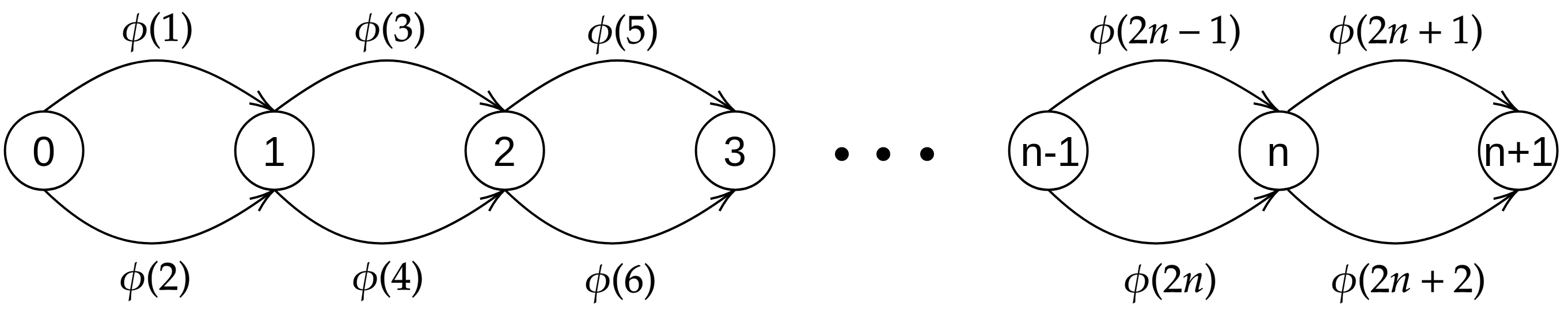}}
    \caption{WC-G: disjunction per period}
    \label{fig:disjuction-per-period}
    \vspace{3mm}
    \resizebox{\columnwidth}{!}{\includegraphics[width=\textwidth]{arXiv/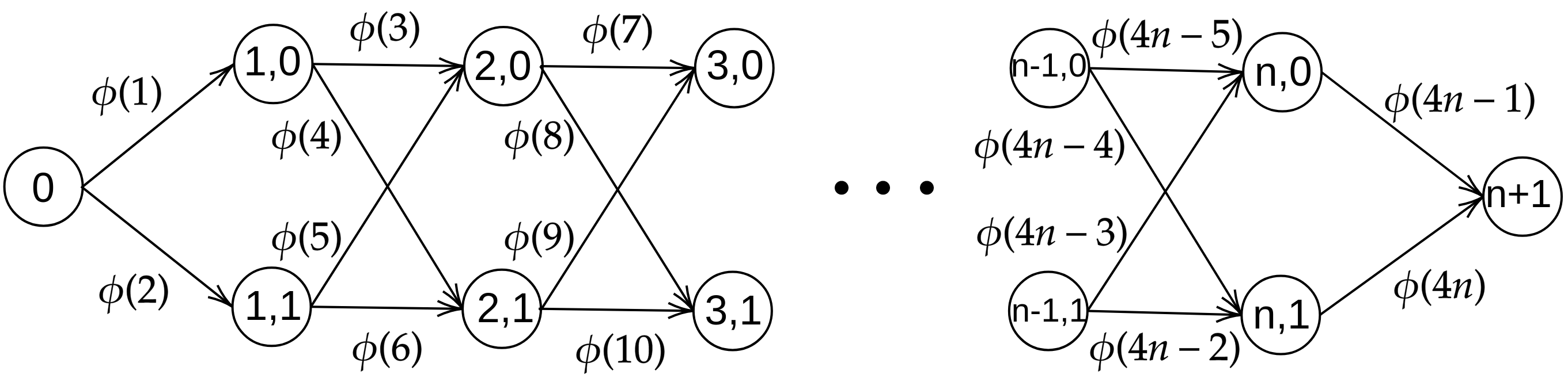}}
    \caption{SPP: disjunction per two consecutive periods}
    \label{fig:disjuction-per-2periods}
\end{figure}

The cost functions are distinct in their formulations.
The WC-G model defines the cost at time $t$ as a function of $(\bm{x}_{t+1}, \bm{y}_{t}, \bm{z}_t)$
\begin{align*}
    w_t^{\text{wc}} = \bm{x}_{t+1}^\top \bm{Q}_{t+1} \bm{x}_{t+1} + \bm{y}_{t}^\top \bm{R}_{t} \bm{y}_{t} + \bm{z}_t^\top \bm{S}_t \bm{z}_t, \quad  t \in [n]
\end{align*}
with the total cost $w^{\text{wc}} = \bm{x}_{1}^\top \bm{Q}_1 \bm{x}_{1} + \sum_{t \in [n]} w_t^{\text{wc}}$. Whereas the SPP model defines it as a function of $(\bm{x}_t, \bm{y}_{t}, \bm{z}_t)$,
\begin{align*}
    w_t^{\text{spp}} = \bm{x}_{t}^\top \bm{Q}_t \bm{x}_{t} + \bm{y}_{t}^\top \bm{R}_t \bm{y}_{t} + \bm{z}_t^\top \bm{S}_t \bm{z}_t, \quad  t \in [n-1]
\end{align*}
and the final cost includes the terminal state cost
\begin{align*}
    w_n^{\text{spp}} = \bm{x}_{n}^\top \bm{Q}_n \bm{x}_{n} + \bm{y}_n^\top \bm{R}_n \bm{y}_n + \bm{z}_n^\top \bm{S}_n \bm{z}_n + \bm{x}_{n+1}^\top \bm{Q}_{n+1} \bm{x}_{n+1},
\end{align*}
with the total cost $w^{\text{spp}} = \sum_{t \in [n]} w_t^{\text{spp}}$. 

The cost functions of the two models are represented in Figure \ref{fig:with-cut-edge} and \ref{fig:spp-edge}. 
\begin{figure}[h!]
    \centering
    \resizebox{0.8\columnwidth}{!}{\includegraphics[width=\textwidth]{arXiv/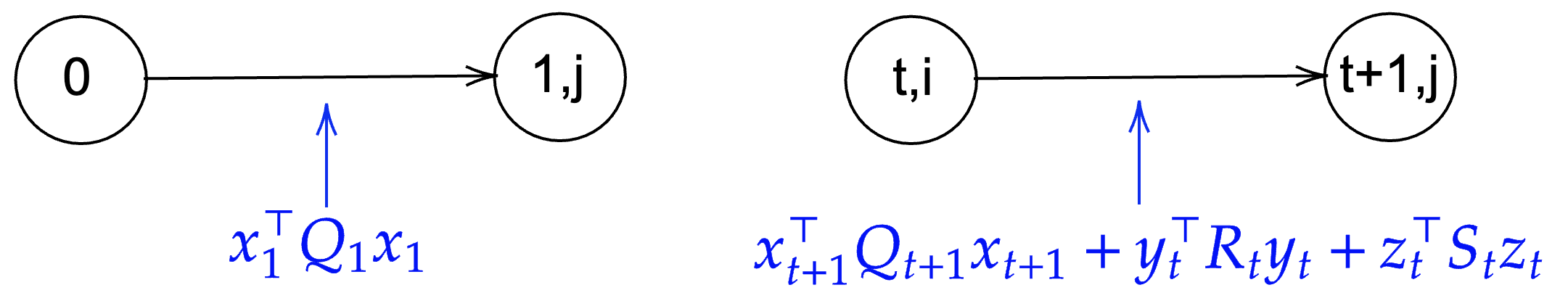}}
    \caption{Edge function of With-Cuts model}
    \label{fig:with-cut-edge}
    \vspace{3mm}
    \resizebox{\columnwidth}{!}{\includegraphics[width=\textwidth]{arXiv/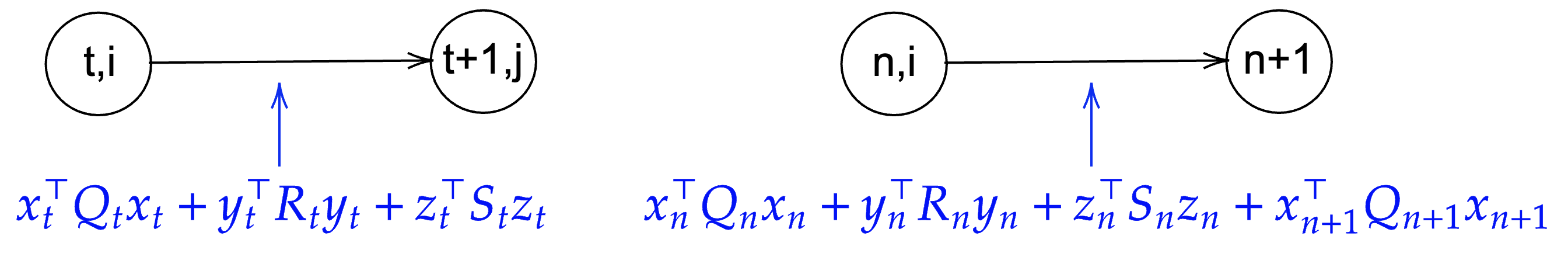}}
    \caption{Edge function of SPP model}
    \label{fig:spp-edge}
\end{figure}

Despite both models resulting in the same total cost $w$, the SPP model employs the perspective reformulation of each cost, while the WC-G model includes an additional nonnegative term in the nonlinear cut. Consider the tightening $\tilde{w}_t \leq w_t$ for $d_z=1$:
\begin{align*}
    &\Tilde{w}_0^{\text{wc}} = \bm{x}_{1}^\top \bm{Q}_1 \bm{x}_{1},\\
    &\Tilde{w}_t^{\text{wc}} = \begin{cases}
    \left(\frac{1}{z_t} -1 \right) 
    \tau'(\Bar{\bm{x}}_t, \Bar{\bm{x}}_{t+1}, \bar{z}_t) &z_t >0 \ \\
    \quad + \bm{x}_{t+1}^\top \bm{Q}_{t+1} \bm{x}_{t+1} + \frac{\bm{y}_{t}^\top \bm{R}_t \bm{y}_{t}}{z_t} + z_t,  \\[2pt]
    \bm{x}_{t+1}^\top \bm{Q}_{t+1} \bm{x}_{t+1}, & z_t = 0
    \end{cases}
\end{align*}
for $t \in [n]$, and 
\begin{align*}
    &\Tilde{w}_0^{\text{spp}} = 0,\\
    &\Tilde{w}_t^{\text{spp}} = \begin{cases}
    \bm{x}_t^\top \bm{Q}_t \bm{x}_t  + \frac{\bm{y}_{t}^\top \bm{R}_t \bm{y}_{t}}{z_t} + z_t, & z_t >0\\
    \bm{x}_t^\top \bm{Q}_t \bm{x}_t, & z_t =0
    \end{cases} \quad \forall t \in [n-1],\\
    &\Tilde{w}_n^{\text{spp}} = \begin{cases}
    \bm{x}_n^\top \bm{Q}_n \bm{x}_n + \bm{x}_{n+1}^\top \bm{Q}_{n+1} \bm{x}_{n+1}  + \frac{\bm{y}_n^\top \bm{R}_n \bm{y}_n}{z_n} + z_n, &z_n>0\\
    \bm{x}_n^\top \bm{Q}_n \bm{x}_n + \bm{x}_{n+1}^\top \bm{Q}_{n+1} \bm{x}_{n+1}, &z_n = 0
    \end{cases}
\end{align*}
where $\tau'(\Bar{\bm{x}}_t, \Bar{\bm{x}}_{t+1}, \Bar{z}_t)$ is the optimal value of \eqref{Form:sep general relaxed sub prob}. 
Notice
\begin{align*}
    \sum_{t=0}^n \Tilde{w}_t^{\text{wc}} - \sum_{t=0}^n \Tilde{w}_t^{\text{spp}} = \sum_{t: z_t>0} \left(\frac{1}{z_t} -1 \right) \tau'(\Bar{\bm{x}}_t, \Bar{\bm{x}}_{t+1}, \Bar{z}_t) > 0
\end{align*}
unless $z_t = 0$, $\forall t \in [n]$. 
}


\section{Application: energy management of power-split hybrid electric vehicle} \label{sec:HEV}

In this section, we present a numerical study on the energy management of a power-split hybrid electric vehicle (PS-HEV) introduced in \cite{borhan2011mpc}. The main objective is to efficiently control the PS-HEV's powertrain components to track a predefined reference cycle closely.

The powertrain of a PS-HEV consists of three main components: an internal combustion engine, an electric motor, and a generator. Their interaction is governed by a control system that coordinates the energy distribution to meet the performance targets. To model such a system, we define the state $x$, control $y$, and measured disturbance $v$ as follows:
\begin{equation}
    \bm{x} = \begin{bmatrix}
        \text{SOC} \\ \dot{m}_f 
    \end{bmatrix} \quad \bm{y} = \begin{bmatrix}
        V \\ w_{\text{eng}} \\ T_{\text{eng}}
    \end{bmatrix} \quad \bm{v} = \begin{bmatrix}
        V_{r} \\ T_{d} 
    \end{bmatrix}. \label{HEV_variables}
\end{equation}
\pagebreak
\noindent
The state variables comprise the battery's state of charge SOC ([0-1]) and the fuel consumption rate $\dot{m}_f$ (kg/sec). The control variables consist of the vehicle speed $V$ (m/s), the speed $w_{\text{eng}}$ (rad/sec) and the torque $T_{\text{eng}}$ (N$\cdot$m) of the engine. The reference speed $V_{r}$ (m/s) and driver's torque demand $T_{d}$ (N$\cdot$m) are given parameters. 

\subsection{Nonlinear MPC} \label{sec:nonlinearMPC}

In the model predictive control (MPC) approach, 
the model is discretized with a sampling time $T_s$, and $n$-period HCPs \eqref{nonlinearMPC} are solved iteratively. 
\begin{table}[ht]
    \begin{minipage}{\linewidth}
    \normalsize
\begin{subequations}\label{nonlinearMPC}
\begin{align}
    \min \ & \sum_{t = t_0}^{t_0+n} \Big[q_1 ( \text{SOC}(t) - \text{SOC}_{r} )^2 + q_2 (\dot{m}_f(t) )^2 \Big]  \nonumber\\
    & \ + \sum_{t = t_0}^{t_0+n-1} \Big[ r_1 ( V(t) - V_{r}(t))^2+ r_2 (w_{\text{eng}} (t) )^2  \nonumber\\
    & \hspace{2.3cm}{ + r_3 (T_{\text{eng}} (t))^2 + s z_{\text{eng}} (t) \Big]} \label{nonlinearMPC_obj}\\
    \text{s.t. } \ & \text{SOC}(t + 1) = \text{SOC}(t ) - \frac{ T_s}{C_{\text{batt}}}I(t) \label{nonlinearMPC_soc}\\
    &\dot{m}_f(t + 1) = \gamma \dot{m}_f(t) + \varphi_{m_f} (w_{\text{eng}} (t), T_{\text{eng}} (t)) \label{nonlinearMPC_mf}\\
    & I(t) = \frac{V_{\text{oc}} (t) - \sqrt{V_{\text{oc}} (t) ^2 - 4 R_{\text{batt}} (t) P_{\text{batt}} (t)}}{2 R_{\text{batt}}(t)} \label{nonlinearMPC_I}\\
    &P_{\text{batt}} (t) = \varphi_{\text{mot}}(w_{\text{mot}} (t), T_{\text{mot}} (t))  \nonumber\\
    &\qquad \qquad \qquad + \varphi_{\text{gen}}(w_{\text{gen}}(t), T_{\text{gen}}(t)) \label{nonlinearMPC_Pbatt} \\
    &V_\text{oc} (t) = \varphi_{V_\text{oc}}(\text{SOC}(t)) \label{nonlinearMPC_Voc} \\
    &R_\text{batt} (t) = \varphi_R^{\text{DC}}(\text{SOC}(t)) \mathbbm{1}\{P_{\text{batt}} (t) \geq 0\}  \nonumber\\
    & \qquad \quad \quad + \varphi_R^{\text{C}}(\text{SOC}(t)) \mathbbm{1}\{P_{\text{batt}} (t) < 0\} \label{nonlinearMPC_Rbatt} \\
    &w_{\text{gen}}(t) = \frac{N_S+N_R}{N_S} w_{\text{eng}}(t) - \frac{N_R }{N_S }w_{\text{mot}}(t) \label{nonlinearMPC_wgen}\\
    &w_{\text{mot}}(t) = \frac{g_f}{r_w}V(t) \label{nonlinearMPC_wmot}\\
    &T_{\text{gen}}(t) = - \frac{N_S}{N_S + N_R} T_{\text{eng}}(t) \label{nonlinearMPC_Tgen} \\
    &T_{\text{mot}}(t) = \frac{T_{d}(t)-T_{\text{b}}}{g_f} - \frac{N_R}{N_S+N_R}T_{\text{eng}}(t)  \label{nonlinearMPC_Tmot} \\
    &\text{SOC}^{\min} \leq \text{SOC} (t) \leq \text{SOC}^{\max} \label{nonlinearMPC_soc_bd}\\
    &\dot{m}_f^{\min} \leq \dot{m}_f (t) \leq \dot{m}_f^{\max} \label{nonlinearMPC_mf_bd}\\
    &V^{\min} \leq V (t) \leq V^{\max} \label{nonlinearMPC_V_bd}\\
    &w_{\text{eng}}^{\min} z_{\text{eng}}(t) \leq w_{\text{eng}} (t) \leq w_{\text{eng}}^{\max} z_{\text{eng}}(t)  \label{nonlinearMPC_weng_bd}\\ 
    &T_{\text{eng}}^{\min} z_{\text{eng}}(t) \leq T_{\text{eng}} (t)\leq T_{\text{eng}}^{\max} z_{\text{eng}}(t)  \label{nonlinearMPC_Teng_bd}\\
    &z_{\text{eng}}(t) \in \{0,1\}, \qquad \text{$t \in [t_0,t_0+n-1]$}. \label{nonlinearMPC_z_binary}
\end{align}
\end{subequations}
\end{minipage}
\end{table}

The system dynamics at period $t_0$ are governed by \eqref{nonlinearMPC_soc}--\eqref{nonlinearMPC_Tmot}. 
The state variables SOC and $\dot{m}_f$, in period $t+1$ is determined by \eqref{nonlinearMPC_soc} and \eqref{nonlinearMPC_mf}, where $C_{\text{batt}}$ is the battery capacity, $\alpha_{\text{eng}}$ is a constant determined by the vehicle type, and $I$ is the current computed as in \eqref{nonlinearMPC_I}.
The power $P_{\text{batt}}$, open-circuit voltage $V_{oc}$ (V), and internal resistance $R_{\text{batt}}$ ($\Omega$) of the battery are estimated using empirical maps as in \eqref{nonlinearMPC_Pbatt}--\eqref{nonlinearMPC_Rbatt}.
Additionally, equations \eqref{nonlinearMPC_wgen}--\eqref{nonlinearMPC_Tmot} define the speed and torque of the motor and generator.
Furthermore, \eqref{nonlinearMPC_weng_bd} and \eqref{nonlinearMPC_Teng_bd} ensure that the engine speed and torque belong to $w_{\text{eng}} \in [w_{\text{eng}}^{\min}, w_{\text{eng}}^{\max}]$ and $T_{\text{eng}} \in [T_{\text{eng}}^{\min}, T_{\text{eng}}^{\max}]$ when engine is on, and otherwise, $T_{\text{eng}}=w_{\text{eng}}=0$, with the indicator $z_{\text{eng}}$ of engine on/off. Additional bound constraints \eqref{nonlinearMPC_soc_bd} - \eqref{nonlinearMPC_V_bd} on $\text{SOC}$, $\dot{m}_f$, $V$ are imposed. 
The objective is to minimize \eqref{nonlinearMPC_obj}, where $\text{SOC}_{r}$ is the desired $\text{SOC}$.

\subsection{Linearized MPC} \label{sec:linearMPC}
To mitigate the computational challenge of the nonlinear model \eqref{nonlinearMPC}, a linearized MPC is employed. By projecting out variables other than those in \eqref{HEV_variables} and utilizing the gradient approach, the state in period $t+1$ is expressed as linear functions of the state, control, and binary variables in period $t$ as shown in \eqref{eq:linear_mpc}. 

Although the fuel consumption rate $\dot{m}_f$ remains independent of its previous value in the nonlinear model \eqref{nonlinearMPC} ($\gamma = 0$), we examine its influence by varying $\gamma$ as $0$, $0.01$, and $0.02$. 
The proper selection of sampling time and initial values is critical, as the feasibility and quality of the solution heavily depend on these choices.

\begin{table*}[ht]
    \begin{minipage}{\linewidth}
    \normalsize
    \begin{equation}
    \begin{aligned}
        \text{SOC}(t + 1) &= \text{SOC} (t) -\frac{T_s}{C_{\text{batt}}}\Bigg[I(t_0) + \frac{\partial I(t_0)}{\partial P_{\text{batt}}} \frac{\partial P_{\text{batt}}(t_0)}{\partial w_{\text{eng}}} (w_{\text{eng}}(t) - w_{\text{eng}}(t_0)) + \frac{\partial I(t_0)}{\partial P_{\text{batt}}}\frac{\partial P_{\text{batt}}(t_0)}{\partial T_{\text{eng}}} (T_{\text{eng}}(t) - T_{\text{eng}}(t_0))\\
        &\hspace{-7.5mm} + \left(\frac{\partial I(t_0)}{\partial V_{\text{oc}}}\frac{d V_{\text{oc}}(t_0)}{d \text{SOC}} + \frac{\partial I(t_0)}{\partial R_{\text{batt}}}\frac{d R_{\text{batt}}(t_0)}{d \text{SOC}}\right) (\text{SOC}(t) - \text{SOC} (t_0)) + \frac{\partial I(t_0)}{\partial V} (V(t) - V(t_0)) + \frac{\partial I(t_0)}{\partial T_{d}} (T_{d}(t) - T_{d}(t_0))\Bigg]\\
        \dot{m}_f(t + 1) &= \gamma \dot{m}_f(t) + \varphi_{m_f} (w_{\text{eng}} (t_0), T_{\text{eng}} (t_0)) + \frac{\partial \varphi_{m_f}(t_0)}{\partial w_{\text{eng}}} \left( w_{\text{eng}} (t) - w_{\text{eng}} (t_0) \right) + \frac{\partial \varphi_{m_f}(t_0)}{\partial T_{\text{eng}}} \left( T_{\text{eng}} (t) - T_{\text{eng}} (t_0) \right)
    \end{aligned}
    \label{eq:linear_mpc}
\end{equation}
\end{minipage}
\end{table*}


\subsection{Experimental results} \label{sec:HEV_result}
In our experiment, we employ a setup similar to the one in Section \ref{sec:experiment}, utilizing the Gurobi 9.0.2 solver with parameter values detailed in Appendix \ref{AppendixE}.
We assess the improvement provided by the proposed cuts to the WC-G model compared to the original MIQP formulation. The large number of conic quadratic constraints in the SPP model caused frequent numerical errors; therefore, 
it is excluded from this comparison.

Table \ref{tab:HEV_full_comparison} presents average and maximum values for the relaxation gap, computational time, and the number of nodes in B\&B for varying costs for the vehicle speed ($r_1 = 1, 10, 100$), fuel consumption rate dependencies to the previous state ($\gamma = 0, 0.01, 0.02$), and sampling times ($T_s=0.5, 1$ sec).
\begin{table*}[ht]
    \caption{Comparison of MIQP and WC-G for the PS-HEV Application.} 
    \label{tab:HEV_full_comparison}
    \resizebox{\textwidth}{!}{
    \centering
    \begin{tabular}{p{0.15cm}|p{0.2cm}|p{0.2cm}|p{0.7cm}p{0.7cm}p{0.7cm}p{0.7cm}p{0.7cm}p{0.7cm}|p{0.7cm}p{0.7cm}p{0.7cm}p{0.7cm}p{0.7cm}p{0.7cm}}
\hline \hline
\multicolumn{3}{c|}{Sampling Time}  & \multicolumn{6}{c|}{$T_s=1$ sec}                                                                                                                         & \multicolumn{6}{c}{$T_s=0.5$ sec}                                                                                                                       \\ \hline
\multicolumn{3}{c|}{Instance}  & \multicolumn{2}{c|}{Relax. Gap (\%)}                         & \multicolumn{2}{c|}{Comp. Time (sec)}                       & \multicolumn{2}{c|}{\# Branch}   & \multicolumn{2}{c|}{Relax. Gap (\%)}                         & \multicolumn{2}{c|}{Comp. Time (sec)}                       & \multicolumn{2}{c}{\# Branch}   \\ \hline
\multicolumn{1}{c|}{$r_1$} & \multicolumn{1}{c|}{$\gamma$} & \multicolumn{1}{c|}{Metrics} & \multicolumn{1}{c}{MIQP}  & \multicolumn{1}{c|}{WC-G}   & \multicolumn{1}{c}{MIQP} & \multicolumn{1}{c|}{WC-G}   & \multicolumn{1}{c}{MIQP} & \multicolumn{1}{c|}{WC-G}   & \multicolumn{1}{c}{MIQP}  & \multicolumn{1}{c|}{WC-G}   & \multicolumn{1}{c}{MIQP} & \multicolumn{1}{c|}{WC-G}   & \multicolumn{1}{c}{MIQP} & \multicolumn{1}{c}{WC-G}   \\ \hline
\multicolumn{1}{c|}{\multirow{6}{*}{$1$}} & \multicolumn{1}{c|}{\multirow{2}{*}{$0$}} & \multicolumn{1}{c|}{Avg.}                                                                & \multicolumn{1}{c}{5.09}  & \multicolumn{1}{c|}{0.00} & \multicolumn{1}{c}{0.03} & \multicolumn{1}{c|}{0.01} & \multicolumn{1}{c}{2.49} & \multicolumn{1}{c|}{0} & \multicolumn{1}{c}{3.64}  & \multicolumn{1}{c|}{0.00} & \multicolumn{1}{c}{0.03} & \multicolumn{1}{c|}{0.01} & \multicolumn{1}{c}{1.7} & \multicolumn{1}{c}{0} \\ 
& &\multicolumn{1}{c|}{Max}                                                                 & \multicolumn{1}{c}{90.85} & \multicolumn{1}{c|}{0.00} & \multicolumn{1}{c}{0.13} & \multicolumn{1}{c|}{0.03} & \multicolumn{1}{c}{41}   & \multicolumn{1}{c|}{0}    & \multicolumn{1}{c}{98.20} & \multicolumn{1}{c|}{0.00} & \multicolumn{1}{c}{0.15} & \multicolumn{1}{c|}{0.02} & \multicolumn{1}{c}{41}   & \multicolumn{1}{c}{0}    \\ \cline{2-15} 
 & \multicolumn{1}{c|}{\multirow{2}{*}{$0.01$}} & \multicolumn{1}{c|}{Avg.}                                                                & \multicolumn{1}{c}{5.09}  & \multicolumn{1}{c|}{0.05} & \multicolumn{1}{c}{0.04} & \multicolumn{1}{c|}{0.02} & \multicolumn{1}{c}{3.94} & \multicolumn{1}{c|}{1.19} & \multicolumn{1}{c}{3.64}  & \multicolumn{1}{c|}{0.48}  & \multicolumn{1}{c}{0.03} & \multicolumn{1}{c|}{0.02} & \multicolumn{1}{c}{1.66} & \multicolumn{1}{c}{1.18} \\ 
& & \multicolumn{1}{c|}{Max}                                                       & \multicolumn{1}{c}{90.85} & \multicolumn{1}{c|}{3.25} & \multicolumn{1}{c}{0.70} & \multicolumn{1}{c|}{0.21} & \multicolumn{1}{c}{139}  & \multicolumn{1}{c|}{39}   & \multicolumn{1}{c}{98.24} & \multicolumn{1}{c|}{38.99} & \multicolumn{1}{c}{0.32} & \multicolumn{1}{c|}{0.19} & \multicolumn{1}{c}{41}   & \multicolumn{1}{c}{39}   \\ \cline{2-15}
 & \multicolumn{1}{c|}{\multirow{2}{*}{$0.02$}} & \multicolumn{1}{c|}{Avg.}                                                                & \multicolumn{1}{c}{5.09}  & \multicolumn{1}{c|}{0.14} & \multicolumn{1}{c}{0.04} & \multicolumn{1}{c|}{0.02} & \multicolumn{1}{c}{3.12} & \multicolumn{1}{c|}{1.64} & \multicolumn{1}{c}{5.97}  & \multicolumn{1}{c|}{0.93}  & \multicolumn{1}{c}{0.05} & \multicolumn{1}{c|}{0.02} & \multicolumn{1}{c}{41.04} & \multicolumn{1}{c}{3.12} \\ 
& & \multicolumn{1}{c|}{Max}                                                       & \multicolumn{1}{c}{90.85} & \multicolumn{1}{c|}{7.59} & \multicolumn{1}{c}{0.64} & \multicolumn{1}{c|}{0.28} & \multicolumn{1}{c}{101}  & \multicolumn{1}{c|}{45}   & \multicolumn{1}{c}{96.77} & \multicolumn{1}{c|}{37.73} & \multicolumn{1}{c}{1.77} & \multicolumn{1}{c|}{0.27} & \multicolumn{1}{c}{3913}  & \multicolumn{1}{c}{209}  \\ \hline
\multicolumn{1}{c|}{\multirow{6}{*}{$10$}} & \multicolumn{1}{c|}{\multirow{2}{*}{$0$}} & \multicolumn{1}{c|}{Avg.}                                                                & \multicolumn{1}{c}{4.58}  & \multicolumn{1}{c|}{0.00} & \multicolumn{1}{c}{0.04} & \multicolumn{1}{c|}{0.02} & \multicolumn{1}{c}{2.48} & \multicolumn{1}{c|}{0.06} & \multicolumn{1}{c}{3.47}  & \multicolumn{1}{c|}{0.00} & \multicolumn{1}{c}{0.04} & \multicolumn{1}{c|}{0.02} & \multicolumn{1}{c}{1.69} & \multicolumn{1}{c}{0.04} \\ 
& & \multicolumn{1}{c|}{Max}          &                                                       \multicolumn{1}{c}{87.02} & \multicolumn{1}{c|}{0.00} & \multicolumn{1}{c}{0.42} & \multicolumn{1}{c|}{0.14} & \multicolumn{1}{c}{41}   & \multicolumn{1}{c|}{5}    & \multicolumn{1}{c}{97.90} & \multicolumn{1}{c|}{0.00} & \multicolumn{1}{c}{0.60} & \multicolumn{1}{c|}{0.12} & \multicolumn{1}{c}{41}   & \multicolumn{1}{c}{3}    \\ \cline{2-15}
 & \multicolumn{1}{c|}{\multirow{2}{*}{$0.01$}} & \multicolumn{1}{c|}{Avg.}                                                                & \multicolumn{1}{c}{4.58}  & \multicolumn{1}{c|}{0.03} & \multicolumn{1}{c}{0.29}  & \multicolumn{1}{c|}{0.06} & \multicolumn{1}{c}{62.97} & \multicolumn{1}{c|}{5.82} & \multicolumn{1}{c}{3.47}  & \multicolumn{1}{c|}{0.44}  & \multicolumn{1}{c}{2.53}   & \multicolumn{1}{c|}{0.31}  & \multicolumn{1}{c}{721.76} & \multicolumn{1}{c}{57.58} \\ 
& & \multicolumn{1}{c|}{Max}                                                                 & \multicolumn{1}{c}{87.01} & \multicolumn{1}{c|}{2.88} & \multicolumn{1}{c}{23.01} & \multicolumn{1}{c|}{3.63} & \multicolumn{1}{c}{5697}  & \multicolumn{1}{c|}{573}  & \multicolumn{1}{c}{97.91} & \multicolumn{1}{c|}{33.43} & \multicolumn{1}{c}{198.37} & \multicolumn{1}{c|}{27.93} & \multicolumn{1}{c}{63583}  & \multicolumn{1}{c}{6017}  \\ \cline{2-15}
 & \multicolumn{1}{c|}{\multirow{2}{*}{$0.02$}} & \multicolumn{1}{c|}{Avg.}                                                                & \multicolumn{1}{c}{4.58}  & \multicolumn{1}{c|}{0.09} & \multicolumn{1}{c}{0.21}  & \multicolumn{1}{c|}{0.10} & \multicolumn{1}{c}{40.44} & \multicolumn{1}{c|}{12.64} & \multicolumn{1}{c}{5.79}   & \multicolumn{1}{c|}{0.94}  & \multicolumn{1}{c}{1.52}   & \multicolumn{1}{c|}{0.14} & \multicolumn{1}{c}{431.49} & \multicolumn{1}{c}{27.87} \\ 
& & \multicolumn{1}{c|}{Max}                                                                 & \multicolumn{1}{c}{87.01} & \multicolumn{1}{c|}{5.85} & \multicolumn{1}{c}{12.74} & \multicolumn{1}{c|}{5.18} & \multicolumn{1}{c}{3131}  & \multicolumn{1}{c|}{895}   & \multicolumn{1}{c}{100.00} & \multicolumn{1}{c|}{39.51} & \multicolumn{1}{c}{107.14} & \multicolumn{1}{c|}{8.29} & \multicolumn{1}{c}{30791}  & \multicolumn{1}{c}{2407}  \\ \hline
\multicolumn{1}{c|}{\multirow{6}{*}{$100$}} & \multicolumn{1}{c|}{\multirow{2}{*}{$0$}} & \multicolumn{1}{c|}{Avg.}                                                                & \multicolumn{1}{c}{4.48}  & \multicolumn{1}{c|}{0.00} & \multicolumn{1}{c}{0.05} & \multicolumn{1}{c|}{0.02} & \multicolumn{1}{c}{2.54} & \multicolumn{1}{c|}{0.01} & \multicolumn{1}{c}{3.43}  & \multicolumn{1}{c|}{0.00} & \multicolumn{1}{c}{0.04} & \multicolumn{1}{c|}{0.02} & \multicolumn{1}{c}{1.66} & \multicolumn{1}{c}{0.01} \\ 
& & \multicolumn{1}{c|}{Max}                                                                 & \multicolumn{1}{c}{85.78} & \multicolumn{1}{c|}{0.00} & \multicolumn{1}{c}{0.28} & \multicolumn{1}{c|}{0.13} & \multicolumn{1}{c}{41}   & \multicolumn{1}{c|}{1}    & \multicolumn{1}{c}{97.86} & \multicolumn{1}{c|}{0.00} & \multicolumn{1}{c}{0.29} & \multicolumn{1}{c|}{0.12} & \multicolumn{1}{c}{41}   & \multicolumn{1}{c}{1}    \\ \cline{2-15}
 & \multicolumn{1}{c|}{\multirow{2}{*}{$0.01$}} & \multicolumn{1}{c|}{Avg.}                                                                & \multicolumn{1}{c}{4.48}  & \multicolumn{1}{c|}{0.03} & \multicolumn{1}{c}{1.69}   & \multicolumn{1}{c|}{0.08} & \multicolumn{1}{c}{368.3} & \multicolumn{1}{c|}{9.7} & \multicolumn{1}{c}{3.66}  & \multicolumn{1}{c|}{0.43}  & \multicolumn{1}{c}{7.03}   & \multicolumn{1}{c|}{0.75}  & \multicolumn{1}{c}{1688.5} & \multicolumn{1}{c}{130.1} \\ 
& & \multicolumn{1}{c|}{Max}                                                                 & \multicolumn{1}{c}{85.78} & \multicolumn{1}{c|}{2.98} & \multicolumn{1}{c}{135.16} & \multicolumn{1}{c|}{6.08} & \multicolumn{1}{c}{30635}  & \multicolumn{1}{c|}{963}  & \multicolumn{1}{c}{97.88} & \multicolumn{1}{c|}{32.88} & \multicolumn{1}{c}{381.19} & \multicolumn{1}{c|}{46.38} & \multicolumn{1}{c}{93660}   & \multicolumn{1}{c}{8585}   \\ \cline{2-15}
 & \multicolumn{1}{c|}{\multirow{2}{*}{$0.02$}} & \multicolumn{1}{c|}{Avg.}                                                                & \multicolumn{1}{c}{4.77}  & \multicolumn{1}{c|}{0.09} & \multicolumn{1}{c}{4.69}   & \multicolumn{1}{c|}{0.37}  & \multicolumn{1}{c}{1174.2} & \multicolumn{1}{c|}{62.6} & \multicolumn{1}{c}{4.97}  & \multicolumn{1}{c|}{0.84}  & \multicolumn{1}{c}{7.36}   & \multicolumn{1}{c|}{0.38}  & \multicolumn{1}{c}{1666.3} & \multicolumn{1}{c}{58.2} \\ 
& & \multicolumn{1}{c|}{Max}                                                                 & \multicolumn{1}{c}{85.77} & \multicolumn{1}{c|}{6.06} & \multicolumn{1}{c}{286.14} & \multicolumn{1}{c|}{23.20} & \multicolumn{1}{c}{74288}   & \multicolumn{1}{c|}{4467}  & \multicolumn{1}{c}{97.61} & \multicolumn{1}{c|}{45.98} & \multicolumn{1}{c}{367.27} & \multicolumn{1}{c|}{28.89} & \multicolumn{1}{c}{86356}   & \multicolumn{1}{c}{4915}   \\ \hline 
\multicolumn{3}{c|}{\textbf{Overall Average}} & \multicolumn{1}{c}{\textbf{4.75}}	&\multicolumn{1}{c|}{\textbf{0.05}}	&\multicolumn{1}{c}{\textbf{0.79}}	&\multicolumn{1}{c|}{\textbf{0.08}}	&\multicolumn{1}{c}{\textbf{184.50}}	&\multicolumn{1}{c|}{\textbf{10.41}} & \multicolumn{1}{c}{\textbf{4.23}}	&\multicolumn{1}{c|}{\textbf{0.45}}	&\multicolumn{1}{c}{\textbf{2.0}7}	&\multicolumn{1}{c|}{\textbf{0.19}}	&\multicolumn{1}{c}{\textbf{506.19}}	&\multicolumn{1}{c}{\textbf{30.9}}\\ \hline \hline
\end{tabular}
}
\end{table*}
For a fixed $r_1$ and sampling time $T_s = 1$, the relaxation gap of the MIQP remains nearly constant regardless of the value of $\gamma$. In contrast, the relaxation gap for the WC-G model increases with $\gamma$. Both models exhibit increased computational time and the number of nodes when $\gamma > 0$ compared to $\gamma=0$. However, no specific trend between $\gamma = 0.01$ and $\gamma=0.02$ is observed. 
Regarding $r_1$, the relaxation gap shows no discernable pattern, but the computational time and the number of B\&B nodes increase notably for both models as $r_1$ increases. 

Similar trends are observed for sampling time $T_s = 0.5$. However, with more frequent sampling, the size of both models is twice as large, requiring more computational effort. The number of B\&B nodes and the time required to solve both models are more than doubled compared to $T_s = 1$.

The proposed WC-G model consistently outperforms the original MIQP in terms of the relaxation gap, the number of B\&B nodes, and the required computational time. Overall, the WC-G model leads to about 10x speed up on average. It also is significantly more robust compared to the MIQP model with much lower run times for the worst cases. Unlike for MIQP, the average run times for WC-G are well within the sampling time. Moreover, utilizing linear cuts in the original space improves the relaxations yet maintains QP subproblems that are numerically robust.

\section{Conclusion}\label{sec:conclusion}
In this paper, we study an $n$-period hybrid control problem (HCP) formulated as a mixed-integer quadratic programming problem with linear system dynamics. Utilizing disjunctive programming and projections, we derive the convex hull representation of the epigraph set of the single-period HCP in an extended space. Then, we further project out auxiliary variables to generate two types of cuts in the original variable space, the \textit{feasibility} and \textit{gradient cuts}, which improve the perspective cut by employing the linear dynamics constraint. We show the effectiveness of the proposed cut-generation process by comparing the strengthened model with the original MIQP formulation and the state-of-the-art generalized SPP model in the literature. Additionally, we apply our approach to the energy management of a power-split hybrid electric vehicle, demonstrating improved model performance through experiments.

\section*{Appendix}

\subsection{Proof of Proposition \ref{Prop1}}\label{AppendixA}
First consider \ref{itm:1} and \ref{itm:2}. Suppose \ref{itm:1} is violated by some $\Tilde{v} = \left(\Tilde{x}_1, \Tilde{x}_2, \Tilde{y}, \Tilde{z}, \Tilde{w}\right) \in \myZ_1$, $u_{2} < \ell_{a}$. Then,
\begin{align*}
    &u_{2} \geq \Tilde{x}_2 = a\Tilde{x}_1 +b\Tilde{y}+c\Tilde{z} \geq \ell_{a} + b\Tilde{y} + c\Tilde{z} > u_{2}+b\Tilde{y}+c\Tilde{z}\\
    &\Rightarrow \ \Tilde{z}=1, \ b\Tilde{y}+c <0.
\end{align*}
Similarly, if \ref{itm:2} is violated, $\ell_{2} > u_{a}$, 
\begin{align*}
    &\ell_{2} \leq \Tilde{x}_2 =a\Tilde{x}_1+b\Tilde{y}+c\Tilde{z} \leq u_{a}+b\Tilde{y}+c\Tilde{z} < \ell_{2} + b\Tilde{y}+c\Tilde{z} \\ 
    &\Rightarrow \ \Tilde{z}=1, \ b\Tilde{y}+c >0.
\end{align*}
Thus, if either \ref{itm:1} or \ref{itm:2} is violated, $z = 1$ holds for any feasible solution of (\ref{SIMPLE_PROB}). 

We now prove that \ref{itm:3} - \ref{itm:8} hold for any $\Tilde{v} \in \myZ_1$ by contradiction. If $\Tilde{v}$ violates \ref{itm:3}, $\Tilde{x}_2 - \ell_{2} \Tilde{z} < \ell_{a}(1-\Tilde{z})$,
\begin{align*}
    &\ell_{a}(1-\Tilde{z}) + \ell_{2}\Tilde{z} > \Tilde{x}_2 \geq \ell_{2} \ \Rightarrow \ (\ell_{a} - \ell_{2})(1-\Tilde{z}) > 0\\
     & \Rightarrow \ \Tilde{z}=0, \ \ell_{a} > \Tilde{x}_2 = a\Tilde{x}_1.
\end{align*}
However, this cannot occur since $a\Tilde{x}_1 \geq \ell_{a}$. Thus, \ref{itm:3} cannot be violated by any solution of (\ref{SIMPLE_PROB}) and is a valid cut for $\conv(\myZ_1)$.
In the same manner, when \ref{itm:4} is violated by $\Tilde{v} \in \myZ_1$, $\Tilde{x}_2 - u_{2}\Tilde{z} > u_{a}(1-\Tilde{z})$,
\begin{align*}
    &u_{a}(1-\Tilde{z}) + u_{2}\Tilde{z} < \Tilde{x}_2 \leq u_{2} \ \Rightarrow \ (u_{a} - u_{2})(1-\Tilde{z}) < 0\\ 
    &\Rightarrow \ \Tilde{z}=0 \ u_{a} < \Tilde{x}_2 = a\Tilde{x}_1,
\end{align*}
which contradicts $a\Tilde{x}_1 \leq u_{a}$. Thus, \ref{itm:4} is valid for \eqref{SIMPLE_PROB}. It also can be shown that \ref{itm:5} and \ref{itm:6} cannot be violated by any $\Tilde{v} \in \myZ_1$ as follows: Negate \ref{itm:5} and \ref{itm:6}
\begin{align*}
    \ref{itm:5} \ &\ell_{a}\Tilde{z}+\ell_{2}(1-\Tilde{z}) > \Tilde{x}_2 - b\Tilde{y}-c\Tilde{z} = a \Tilde{x}_1 \geq \ell_a \\ 
    &\Rightarrow \ 0 <(\ell_{2} -\ell_{a})(1-\Tilde{z}) \ \Rightarrow \ \Tilde{z}=0, \ \Tilde{x}_2 = a\Tilde{x}_1 < \ell_{2} \\ 
    &\rightarrow \ \text{Contradicts } \Tilde{x}_2 \geq \ell_{2}.\\
    \ref{itm:6} \ &u_{a}\Tilde{z} +u_{2}(1-\Tilde{z}) < \Tilde{x}_2 - b\Tilde{y}-c\Tilde{z} = a\Tilde{x}_1 \leq u_{a} \\ 
    &\Rightarrow \ (u_{a} - u_{2})(1-\Tilde{z}) >0 \ \Rightarrow \ \Tilde{z}=0, \ \Tilde{x}_2 = a\Tilde{x}_1 > u_{2} \\ 
    &\rightarrow \ \text{Contradicts } \Tilde{x}_2 \leq u_{2}.
\end{align*}
Therefore, \ref{itm:5} and \ref{itm:6} are valid cuts. Similarly, \ref{itm:7} and \ref{itm:8} are valid cuts for \eqref{SIMPLE_PROB}: Negate \ref{itm:7} and \ref{itm:8}
\begin{align*}
    \ref{itm:7} \ & b\Tilde{y}+c\Tilde{z} > (u_{2} - \ell_{a})\Tilde{z} \ \Rightarrow \ \Tilde{z}=1 \\
    & \Rightarrow \ \Tilde{x}_2 = a\Tilde{x}_1 + b\Tilde{y}+c > a\Tilde{x}_1 +u_{2} - \ell_{a} \geq u_{2}\\
    &\rightarrow \ \text{Contradicts } \Tilde{x}_2 \leq u_{2}.\\
    \ref{itm:8} \ & b\Tilde{y}+c\Tilde{z} < (\ell_{2} - u_{a})\Tilde{z} \ \Rightarrow \ \Tilde{z}=1 \\
    &\Rightarrow \ \Tilde{x}_2 = a\Tilde{x}_1 + b\Tilde{y}+c < a\Tilde{x}_1 +\ell_{2} - u_{a} \leq \ell_{2}\\
    &\rightarrow \ \text{Contradicts } \Tilde{x}_2 \geq \ell_{2}.
\end{align*}

\subsection{Proof of Corollary \ref{Prop2_revised}}\label{Appendix:nonlinear_proof}
When (\ref{MIN_PROB}) is feasible, the optimal solution $\sigma^*$ has one of the three values: $x_2$, $\ell (x_1,x_2, z)$, and $u(x_1,x_2,z)$. 

If $x_2 \in \left[\ell (x_1,x_2, z), u(x_1,x_2, z) \right]$, then $\sigma^* = x_2$ and $\tau(x_1, x_2, y, z) = 0$. This results in the nonlinear cut equivalent to the perspective cut 
\begin{align*}
    w \geq q_2 x_2^2 + z + \frac{ry^2}{z}.
\end{align*}

If $x_2 > u(x_1,x_2, z)$, the optimal solution of \eqref{MIN_PROB} is $\sigma^* = u(x_1,x_2, z)$ with $\tau(x_1, x_2, y, z) = \left(\bar{x}_2 - u(x_1,x_2,z)\right)^2$. Note that $u(x_1,x_2,z) = u_{a}$ or $u(x_1,x_2,z) = \frac{ax_1 - \ell_{a}z}{1-z}$ in this case, since $u_{2} \geq x_2 > u(x_1,x_2,z)$ and $\frac{x_2 - \ell_{2}z}{1-z} \geq x_2 > u(x_1,x_2,z)$ as $x_2 \geq \ell_{2}$. Therefore, a nonlinear cut is given as
\begin{align*}
    w \geq q_2 \left( \frac{1}{z} - 1\right) \left( x_2 - u(x_1,z) \right)^2 + q_2 x_2^2 + z + \frac{ry^2}{z}.
\end{align*}

For $x_2 < \ell (x_1,x_2, z)$, $\tau(x_1, x_2, y, z) = \left(\bar{x}_2 - \ell (x_1,x_2,z)\right)^2$ with the optimal solution $\sigma^* = \ell (x_1,x_2, z)$. Similar to the case of $x_2 > u(x_1,x_2,z)$, $x_2 < \ell (x_1,x_2,z)$ only occurs when $\ell (x_1,x_2,z)$ is either $\ell_{a}$ or $\frac{ax_1 -u_{a}z}{1-z}$. Thus, the resulting nonlinear cut is
\begin{align*}
    w \geq q_2 \left( \frac{1}{z} - 1\right) \left( \ell (x_1,z) - x_2 \right)^2 + q_2 x_2^2 + z + \frac{ry^2}{z}.
\end{align*}

\ignore{

For $z \in (0,1)$, $x_2 \in \left[\ell (x_1,x_2, z), u(x_1,x_2, z) \right]$ is equivalent to
\begin{align*}
    \begin{cases}
        \ell_{a} \leq x_2 \leq u_{a}\\
        x_2 -u_{2}z \leq \bar{x}_2 - \bar{x}_2\bar{z} \leq \bar{x}_2 - \ell_{2}\bar{z} \ \text{ (redundant)}\\
        b\bar{y} + c\bar{z} + \ell_{a}\bar{z} \leq \bar{x}_2\bar{z} \leq b\bar{y} +c\bar{z}+ u_{a}\bar{z}
    \end{cases}
\end{align*}
and the corresponding nonlinear cut is the perspective function of the original cost function
\begin{align*}
    w \geq  q_2 x_2^2 + z + \frac{ry^2}{z}.
\end{align*}

If $\bar{x}_2 > \bar{u}$, the optimal solution of (\ref{MIN_PROB}) is $\bar{w} = \bar{u}$ with $\tau(\bar{x}_2, \bar{y},\bar{z}) = (\bar{x}_2 - \bar{u})^2$. Note that $\bar{u} = u_{a}$ or $\bar{u} = \frac{a\bar{x}_1 - \ell_{a}\bar{z}}{1-\bar{z}}$ in this case, since $u_{2} \geq \bar{x}_2 > \bar{u}$ and $\frac{\bar{x}_2 - \ell_{2}\bar{z}}{1-\bar{z}} \geq \bar{x}_2 > \bar{u}$ as $\bar{x}_2 \geq \ell_{2}$. To have $\bar{x}_2 >\bar{u} = u_{a}$, the conditions that need to be satisfied are
\begin{align*}
    \begin{cases}
        u_{a} < \bar{x}_2 \leq u_{2}\\
        u_{a}(1-\bar{z}) + \ell_{2}\bar{z} \leq \bar{x}_2 \ \text{ (redundant)}\\
        \bar{x}_2 \geq b\bar{y} + c\bar{z} + \ell_{a}\bar{z} +u_{a}(1-\bar{z})
    \end{cases}
\end{align*}
and a tighter cut than the perspective cut is generated
\begin{align*}
    w \geq  q_2 \bigg(\frac{1}{z} - 1 \bigg) (\bar{x}_2 - u_{a})^2 + q_2 x_2^2 + z + \frac{ry^2}{z}.
\end{align*}
When $\bar{x}_2>\bar{u} = \frac{a\bar{x}_1 - \ell_{a}\bar{z}}{1-\bar{z}} = \frac{\bar{x}_2 - b\bar{y} - c\bar{z} - \ell_{a}\bar{z}}{1-\bar{z}}$, the conditions 
\begin{align*}
    \begin{cases}
        \bar{x}_2\bar{z} < b\bar{y} + c\bar{z} + \ell_{a}\bar{z}\\
        \bar{x}_2 \leq b\bar{y}+c\bar{z}+\ell_{a}\bar{z} + u_{a}(1-\bar{z})\\
        \bar{x}_2 - b\bar{y} - c\bar{z} - \ell_{a}\bar{z} \leq u_{2} - u_{2}\bar{z} \ \text{ (redundant)}\\
        \bar{x}_2 - b\bar{y} - c\bar{z} - \ell_{a}\bar{z} \leq \bar{x}_2 - \ell_{2}\bar{z} \ \text{ (redundant)}
    \end{cases}
\end{align*}
need to be satisfied, and the following cut is obtained
\begin{align*}
    w \geq  q_2 \bigg(\frac{1}{z} - 1 \bigg) \bigg(\bar{x}_2 - \frac{a\bar{x}_1 - \ell_{a}\bar{z}}{1-\bar{z}} \bigg)^2 + q_2 x_2^2 + z + \frac{ry^2}{z}.
\end{align*}

For $\bar{x}_2 < \bar{\ell}$, the optimal solution is $\bar{w} = \bar{\ell}$ with $\tau(\bar{x}_2, \bar{y}, \bar{z}) = (\bar{x}_2 - \bar{\ell})^2$. Similar to the $\bar{x}_2 > \bar{u}$ case, $\bar{x}_2 < \bar{\ell}$ only occurs when $\bar{\ell}$ is either $\ell_{a}$ or $\frac{a\bar{x}_1 -u_{a}\bar{z}}{1-\bar{z}}$. When $\bar{x}_2<\bar{\ell} = \ell_{a}$, the required conditions are 
\begin{align*}
    \begin{cases}
        \ell_{2} \leq \bar{x}_2 < \ell_{a}\\
        \bar{x}_2 \leq u_{2}\bar{z} + \ell_{a}(1-\bar{z}) \ \text{ (redundant)}\\
        \bar{x}_2 \leq b\bar{y}+c\bar{z} + u_{a}\bar{z} + \ell_{a}(1-\bar{z})
    \end{cases}
\end{align*}
with the corresponding cut 
\begin{align*}
    w \geq  q_2\bigg(\frac{1}{z} - 1 \bigg) (\bar{x}_2 - \ell_{a})^2 + q_2 x_2^2 + z + \frac{ry^2}{z}.
\end{align*}
For $x_2<\bar{\ell} = \frac{a\bar{x}_1 - u_{a}\bar{z}}{1-\bar{z}} = \frac{\bar{x}_2 - b\bar{y} - c\bar{z} - u_{a}\bar{z}}{1-\bar{z}}$, the conditions are
\begin{align*}
    \begin{cases}
        \bar{x}_2\bar{z} > b\bar{y} + c\bar{z} + u_{a}\bar{z}\\
        \bar{x}_2 \geq b\bar{y}+c\bar{z}+u_{a}\bar{z} + \ell_{a}(1-\bar{z})\\
        \bar{x}_2 - b\bar{y} - c\bar{z} - u_{a}\bar{z} \geq \ell_{2} - \ell_{2}\bar{z} \ \text{ (redundant)}\\
        b\bar{y} + c\bar{z} + u_{a}\bar{z} \leq u_{2} \bar{z} \ \text{ (redundant)}
    \end{cases}
\end{align*}
and the nonlinear cut is 
\begin{align*}
    w \geq  q_2 \bigg(\frac{1}{z} - 1 \bigg) \bigg(\bar{x}_2 - \frac{a\bar{x}_1 - u_{a}\bar{z}}{1-\bar{z}} \bigg)^2 + q_2 x_2^2 + z + \frac{ry^2}{z}.
\end{align*}
}

\subsection{Proof of Proposition \ref{Prop3}}\label{AppendixC}
To satisfy $\bar{\bm{\ell}} \leq \bar{\bm{u}}$, \ref{itm:M1} - \ref{itm:M8} need to hold. We show that none of the constraints can be violated by a solution $\bm{v} = \left(\bm{x}_1, \bm{x}_2, \bm{y}\jf, \bm{y}\js, \bm{z}\kf, \bm{z}\ks \right) \in \myX_1$ by contradiction, and thus, they are valid cuts for (\ref{multi-dz 2-way}). Note $\lambda = \mathbbm{1}_{|K_1|}^\top \bm{z}\kf \in \{0,1\}$.

Suppose $\bm{v}$ violates \ref{itm:M1}, $(1-\lambda)\bm{\ell}_{\bm{A}} + \bm{B}\js \bm{y}\js + \bm{C}\ks \bm{z}\ks > (1-\lambda)\bm{u}_{2}$. If $\lambda = 1$, both sides are $0$. Thus, $\lambda = 0$ and $\bm{z}\kf = 0$, $\bm{y}\jf = 0$, and
\begin{equation*}
    \begin{aligned}
        \bm{u}_{2} & \geq \bm{x}_2 = \bm{A}\bm{x}_{1} + \bm{B}\js \bm{y}\js + \bm{C}\ks \bm{z}\ks \geq \ell_{\bm{A}} + \bm{B}\js \bm{y}\js + \bm{C}\ks \bm{z}\ks \\
        & > \lambda \bm{\ell}_{\bm{A}} + (1-\lambda) \bm{u}_{2}= \bm{u}_{2} \ \rightarrow \ \text{Contradiction}.
    \end{aligned}
\end{equation*}
Similarly, if \ref{itm:M2} is violated, $(1-\lambda) \bm{\ell}_{2} > (1-\lambda)\bm{u}_{\bm{A}}+ \bm{B}\js \bm{y}\js + \bm{C}\ks \bm{z}\ks$, $\lambda$ needs to be $0$. Thus, 
\begin{align*}
    \bm{\ell}_{2} &\leq \bm{x}_2 =\bm{A}\bm{x}_{1}+ \bm{B}\jf \bm{y}\jf+\bm{C}\kf \bm{z}\kf \leq \bm{u}_{\bm{A}}+ \bm{B}\jf \bm{y}\jf+\bm{C}\kf \bm{z}\kf \\
    & < \lambda \bm{u}_{\bm{A}} + (1-\lambda) \bm{\ell}_{2}= \bm{\ell}_{2} \ \rightarrow \ \text{Contradiction}.
\end{align*}
Constraints \ref{itm:M3} - \ref{itm:M8} can also be proved by negating each condition and showing contradiction for any $\bm{v} \in \myX_1$.

\subsection{Power-split hybrid electric vehicle}\label{AppendixE}
Here, we report the model parameters used in the PS-HEV model. The constant values are set as follows:
\begin{itemize}
    \item radius of sun gear $N_S = 30m$ and ring gear $N_R = 78m$
    \item friction brake torque $T_b = 0 N \cdot m$:
    \item final transmission gear ratio $g_f = 3.268$
    \item wheel radius $r_w = 0.32 m$
    \item battery capacity $C_{\text{batt}} = 24.7Ah$, target state of charge $\text{SOC}^{\text{ref}} = 0.55$
\end{itemize}
The measured disturbance $V_{r}$ and $T_{d}$ are designed using a real driving cycle data. The bounds and the initial values of the decision variables are set as 
\begin{itemize}
    \item state of charge $\text{SOC} \in [0.05, 0.55]$, $\text{SOC}^0 = 0.47$
    \item fuel consumption rate $\dot{m}_f \in [0, 45.45]$, $\dot{m}_f^0 = 0$ (kg/s)
    \item vehicle speed $V \in [0, 35.486]$, $V^0 = 9.924$ (m/s)
    \item engine speed $w_{\text{eng}} \in [80, 600]$, $w_{\text{eng}}^0 = 300$ (rad/s)
    \item engine torque $T_{\text{eng}} \in [0, 168]$, $T_{\text{eng}}^0 = 80$ (N$\cdot$m)
\end{itemize}
The cost function is defined as in \eqref{nonlinearMPC} with $q_1 = 5500$, $q_2 = 10$, $r_1=1, 10, 100$, $r_2 = r_3 = 0$, $s = 1000$. The linearized MPC setting is
\begin{itemize}
    \item sampling time: $T_s = 0.5, 1$ sec
    \item MPC time horizon: $20$ time periods
    \item simulation duration: $M= 100$ sec
\end{itemize}
\ignore{
and each problem is solved with Gurobi 9.0.2 solver with 
\begin{itemize}
    \item primal feasibility = 1e-9
    \item integer feasibility = 1e-9
    \item dual feasibility (optimality) = 1e-9
    \item PSD tolerance = 1e-12
    \item presolve, cut, heuristic: off
\end{itemize}
}

\section*{Acknowledgement}
We are grateful to Prof. Francesco Borrelli and Yongkeun Choi for providing us the MPC model data used in PS-HEV experiments.




\bibliographystyle{IEEEtran}
\bibliography{ref_arXiv}

\ignore{ 
\vskip -0.5\baselineskip plus -1fil
\begin{IEEEbiography}[{\includegraphics[width=1in,height=1.25in,clip,keepaspectratio]{Images/JisunLee_IEEE.jpg}}]{Jisun Lee} received the B.S. and M.S. degrees in industrial engineering from Seoul National University, Seoul, South Korea, in 2017 and 2019, respectively. She is currently working toward a Ph.D. degree in Industrial Engineering and Operations Research at the University of California, Berkeley, CA, USA. 

Her research interests encompass a broad range of topics in discrete optimization with applications in hybrid system control and statistical learning.
\end{IEEEbiography}
\vskip -2\baselineskip plus -1fil
\begin{IEEEbiography}[{\includegraphics[width=1in,height=1.25in,clip,keepaspectratio]{Images/hyungki-photo.jpeg}}]{Hyungki Im} is a quantitative analyst in Global Fixed Income Currency \& Commodities division at JP Morgan Chase.
He received his Ph.D. from University of California, Berkeley in 2024 with a major in Industrial Engineering and Operations Research. His research interests are in robust optimization, decision-focused learning, and integer programming.
\end{IEEEbiography}
\vskip -2\baselineskip plus -1fil
\begin{IEEEbiography}
[{\includegraphics[width=1in,height=1.25in,clip,keepaspectratio]{Images/AlperAtamturk-new.jpg}}]{Alper Atamt\"urk}
is the Earl J. Isaac Chair in the Science and Analysis of Decision Making,
Professor and Chair of the Department of Industrial Engineering and Operations at the
University of California, Berkeley, CA. He received his Ph.D. from the Georgia Institute of
Technology in 1998 with a major in Operations Research and minor in Computer Science. His
research interests are in optimization, integer programming, optimization under uncertainty
with applications to machine learning, energy systems, portfolio and network design. 
\end{IEEEbiography}
}

\pagebreak

\vfill

\end{document}